\documentclass[a4paper, 12pt]{scrartcl}

\usepackage[numberTheoremsWithin=section, numberEquationsWithinTheorems]{mathEnv}

\usepackage[bibencoding=ascii, style=alphabetic]{biblatex}
\addbibresource[]{mathe.bib}

\usepackage[]{main_arxiv}\pagestyle{scrheadings}
\flushbottom

\newcommand{\R}{\ensuremath{\mathbb{R}}}
\newcommand{\C}{\ensuremath{\mathbb{C}}}
\newcommand{\N}{\ensuremath{\mathbb{N}}}
\newcommand{\K}{\ensuremath{\mathbb{K}}}

\newcommand{\eps}{\ensuremath{\varepsilon}}

\DeclareMathOperator{\idSymb}{\mathrm{id}}
\newcommand{\id}[1]{\idSymb_{#1}}

\newcommand{\sub}{\ensuremath{\subseteq}}
\newcommand{\set}[2]{\ensuremath{\{#1 : #2\} }}
\newcommand{\sset}[1]{\ensuremath{\{#1\}}}
\newcommand{\ndef}{\colonequals}
\newcommand{\defn}{\equalscolon}
\newcommand{\rest}[3][]{\ensuremath{#2|_{#3}\ifthenelse{ \equal{#1}{} }{}{^{#1}}} }

\DeclareMathOperator{\distop}{\mathrm{dist}}
\newcommand{\dist}[2]{\ensuremath \distop(#1,#2)}
\newcommand{\abs}[1]{\ensuremath \lvert #1\rvert}
\newcommand{\norm}[1]{\ensuremath \lVert #1\rVert}
\newcommand{\MetaBall}[4][]
{
	\ensuremath
	{
	\ifthenelse{ \equal{#1}{} }
	{
		 #4_{#3}(#2)
	}
	{
		#4_{#1}(#2, #3)
	}
	}
}
\newcommand{\Ball}[3][]{\MetaBall[#1]{#2}{#3}{B}}
\newcommand{\clBall}[3][]{ \MetaBall[#1]{#2}{#3}{\cl{B}} }

\newcommand{\neigh}[2][]{\ensuremath{ \mathcal{U}\ifthenelse{ \equal{#1}{} }{}{_{#1}}(#2) }}
\newcommand{\neighO}[2][]{\ensuremath{ \mathcal{U}\ifthenelse{ \equal{#1}{} }{}{_{#1}}^\circ(#2) }}
\newcommand{\cl}[1]{\overline{#1}}

\newcommand{\fami}[3]{\ensuremath{(#1)_{#2 \in #3}}}

\newcommand{\famiI}[1]{\fami{#1}{i}{I}}

\newcommand{\dotcup}{\ensuremath{\mathaccent\cdot\cup}}
\newcommand{\disjointU}[2][]{\dotcup\ifthenelse{\equal{#1}{}}{}{_{#1}}#2}
\newcommand{\disjointUI}[1]{\disjointU[i\in I]{#1}}

\newcommand{\Lip}[3]{\ensuremath{\mathrm{Lip}_{#1}^{#2}(#3)}}
\newcommand{\Rint}[5][\int]{\ensuremath {#1}_{#2}^{#3} #4\, d #5}
\newcommand{\Mint}[3][\int]{\Rint[#1]{0}{1}{#2}{#3}}

\newcommand{\dA}[4][]{
	\ensuremath{
	d\ifthenelse{ \equal{#1}{} }{}{^{(#1)}} 
		#2 \ifthenelse{\equal{#3}{} \and \equal{#4}{} }{}{(#3;#4)}
		}
}
\newcommand{\FAbl}[2][]{
	\ensuremath{
	D\ifthenelse{ \equal{#1}{} }{}{^{(#1)}}#2
	}
}
\newcommand{\parFAbl}[3][]{ \ensuremath{ \FAbl[#1]{_{#2} #3} } }

\newcommand{\AblCurve}[2][0]{\ensuremath{\frac{d}{d #2}|_{#2 = #1}}}

\newcommand{\LLA}[2][]{ \ensuremath{\delta_{\ell}\ifthenelse{\equal{#2}{} }{}{(#2)} } }
\newcommand{\RLA}[2][]{ \ensuremath{\delta_{\rho}\ifthenelse{\equal{#2}{} }{}{(#2)} } }

\newcommand{\Tang}[2][]{
\ensuremath{\mathbf{T}\ifthenelse{ \equal{#1}{} }{}{_{#1} }
	\ifthenelse{ \equal{#2}{} }%
	{}%
	{#2}%
	}
}

\newcommand{\ConDiff}[4][]
{
	\ensuremath{
	\mathcal{C}^{#4}_{#1}
	\ifthenelse{ \equal{#2}{} \and \equal{#3}{} }
	{}
	{(#2, #3)}
	}
}

\newcommand{\FC}[4][]
{
	\ensuremath{
	\mathcal{FC}^{#4}
	\ifthenelse{ \equal{#2}{} \and \equal{#3}{} }
	{}
	{(#2, #3)}
	}
}

\newcommand{\GewFunk}{\cW}
\newcommand{\hn}[3]{\ensuremath\norm{#1}_{#2, #3}}

\newcommand{\CF}[4]{\ensuremath\mathcal{C}^{#4}_{#3}(#1, #2)}
\newcommand{\CFo}[4]{\CF{#1}{#2}{#3}{\partial, #4}}

\newcommand{\CcF}[3]{\CF{#1}{#2}{\GewFunk}{#3}}

\newcommand{\CcFo}[3]{\CFo{#1}{#2}{\GewFunk}{#3}}

\newcommand{\CFpro}[6]{\CF{#1}{#2}{#3}{#4}_{#5 \in #6}}
\newcommand{\CFproI}[4]{\CFpro{#1}{#2}{#3}{#4}{i}{I}}

\newcommand{\CcFproI}[3]{\CFpro{#1}{#2}{\GewFunk}{#3}{i}{I}}
\newcommand{\CFfoPRO}[7]{\CFpro{#1}{#2}{#3}{{#5}_\partial, #4}{#6}{#7}}

\newcommand{\CcFfoPROi}[4]{\CFfoPRO{#1}{#2}{\GewFunk}{#3}{#4}{i}{I}}

\newcommand{\CcFoPRO}[5]{\CcFo{#1}{#2}{#3}_{#4 \in #5}}
\newcommand{\CcFoPROi}[3]{\CcFoPRO{#1}{#2}{#3}{i}{I}}

\newcommand{\BC}[3]{\ensuremath{\mathcal{BC}^{#3}(#1, #2)}}

\newcommand{\compIdRaw}{\tilde{\mathfrak{c}}}

\newcommand{\compIdDiffKLWeights}[4]{\mathfrak{c}_{#4, #3}^{#1, #2}}
\newcommand{\compIdSmoothLWeights}[3]{\mathfrak{c}_{#3}^{#1, #2}}
\newcommand{\compIdWeights}[2]{\mathfrak{c}_{#2}^{#1}}

\newcommand{\compIdDiffKLcW}[3]{\compIdDiffKLWeights{#1}{#2}{#3}{\GewFunk}}
\newcommand{\compIdSmoothLcW}[2]{\compIdSmoothLWeights{#1}{#2}{\GewFunk}}
\newcommand{\compIdcW}[1]{\compIdWeights{#1}{\GewFunk}}

\newcommand{\InvIdWeights}[2]{I^{#1}_{#2}}
\newcommand{\InvIdcW}[1]{\InvIdWeights{#1}{\GewFunk}}

\newcommand{\cW}{\ensuremath{\mathcal{W}}}

\newcommand{\ExtWeights}[1]{\ensuremath{#1_{\mathrm{max}}}}

\newcommand{\Opnorm}[1]{\norm{#1}_{op}}

\DeclareMathOperator{\Id}{\mathrm{Id}}
\newcommand{\idco}{\ensuremath{\Id}}
\newcommand{\MaMu}{\ensuremath{\cdot}}
\newcommand{\eval}{\ensuremath{\cdot}}

\newcommand{\QuasiInv}{\ensuremath{QI}}

\newcommand{\Lin}[3][]{
\ensuremath{
\ifthenelse{\equal{#1}{}}
	{
		\ifthenelse{\equal{#2}{#3}}
			{\mathrm{L}(#2)}
			{\mathrm{L}(#2,#3)}
	}
	{
		\mathrm{L}^{#1}(#2, #3)
	}
}
}
\newcommand{\EmbA}[3][]{\ensuremath{\mathcal E}_{#2,#3}\ifthenelse{\equal{#1}{}}{}{^{#1}}}

\newcommand{\Evol}[3]{\ensuremath{\mathrm{Evol}_{#1}^{#3}\ifthenelse{\equal{#2}{}}{}{(#2)}}}
\newcommand{\evol}[3]{\ensuremath{\mathrm{evol}_{#1}^{#3}\ifthenelse{\equal{#2}{}}{}{(#2)}}}

\newcommand{\SX}{\ensuremath X}
\newcommand{\SY}{\ensuremath Y}
\newcommand{\SZ}{\ensuremath Z}

\newcommand{\UF}{\ensuremath U}
\newcommand{\VF}{\ensuremath V}
\newcommand{\WF}{\ensuremath W}

\newcommand{\normsOn}[1]{\ensuremath{\mathcal{N}(#1)}}

\newcommand{\grenzExp}[3][]{\ensuremath{R^{E\ifthenelse{\equal{#1}{}}{}{, #1}}_{#2, #3}}}
\newcommand{\grenzLog}[3][]{\ensuremath{R^{L\ifthenelse{\equal{#1}{}}{}{, #1}}_{#2, #3}}}

\newcommand{\BndFstAblRex}[3][]{\ensuremath{C^{E, (1)}_{#2, #3\ifthenelse{\equal{#1}{}}{}{, #1}}}}
\newcommand{\BndSndAblRex}[3][]{\ensuremath{C^{E, 2}_{#2, #3\ifthenelse{\equal{#1}{}}{}{, #1}}}}
\newcommand{\BndFstAblRlog}[3][]{\ensuremath{C^{L, (1)}_{#2, #3\ifthenelse{\equal{#1}{}}{}{, #1}}}}
\newcommand{\BndSndAblRlog}[3][]{\ensuremath{C^{L, 2}_{#2, #3\ifthenelse{\equal{#1}{}}{}{, #1}}}}

\newcommand{\VectFLok}[2]{\ensuremath{#1_{#2}}}

\newcommand{\Linf}[2][J]{\ensuremath{\ell^\infty_{#1}(#2)}}

\newcommand{\ExpIdWeights}[4][]{\ensuremath{E_{#2, #3}^{#4\ifthenelse{\equal{#1}{}}{}{,#1}} }}

\newcommand{\LogIdWeights}[4][]{\ensuremath{L_{#2, #3}^{#4\ifthenelse{\equal{#1}{}}{}{,#1}} }}

\newcommand{\RadExpFibInv}[3][]{\ensuremath{R_{#2, #3}\ifthenelse{\equal{#1}{}}{}{^{#1}} }}

\begin{document}

\title{Differentiable mappings between weighted restricted products}
\author{Boris Walter}
\date{}
\publishers{
\small{}
Universität Paderborn\\ Institut für Mathematik\\
Warburger Straße 100\\ 33098 Paderborn\\
E-Mail: bwalter@math.upb.de
}
\maketitle

{\small
\begin{abstract}
\noindent{}In this paper, we introduce restricted products for families of locally convex spaces
and formulate criteria ensuring that mappings into such products are continuous or smooth.
As a special case, can define restricted products of weighted function spaces
and obtain results concerning continuity and differentiability
properties of natural non-linear mappings between such spaces.
These concepts and results are the basis for the study of weighted vector fields
on Riemannian manifolds in a subsequent work
(see [B. Walter, \emph{Weighted diffeomorphism groups
of Riemannian manifolds}, {\ttfamily arXiv: \href{http://arxiv.org/abs/1601.02834}{1601.02834}}]),
which serve as modelling spaces for suitable infinite-dimensional Lie groups of diffeomorphisms.
\end{abstract}

\emph{2010 MSC}: Primary 46E10, Secondary 46T20, 26E15, 26E20.
}

\newcommand{\Disk}{\ensuremath{\mathbb{D}}}

\newrefformat{chp}{\hyperref[{#1}]{Chapter~\ref*{#1}}}
\newrefformat{sec}{\hyperref[{#1}]{Section~\ref*{#1}}}
\newrefformat{susec}{\hyperref[{#1}]{Subsection~\ref*{#1}}}
\newrefformat{sususec}{\hyperref[{#1}]{Subsection~\ref*{#1}}}
\newrefformat{app}{\hyperref[{#1}]{Appendix~\ref*{#1}}}

\newrefformat{def}{\hyperref[{#1}]{Definition~\ref*{#1}}}

\newrefformat{ivp}{\hyperref[{#1}]{initial value problem~\ref*{#1}}}
\newrefformat{est}{\hyperref[{#1}]{estimate~\ref*{#1}}}
\newrefformat{id}{\hyperref[{#1}]{identity~\ref*{#1}}}
\newrefformat{bed}{\hyperref[{#1}]{condition~\ref*{#1}}}
\newrefformat{char}{\hyperref[{#1}]{characterization~\ref*{#1}}}

\newrefformat{enum1}{\ref{#1}}

\section{Introduction}
In the book \cite{MR2952176}, Lie groups of weighted diffeomorphisms on Banach spaces were constructed
(see already \cite{MR2263211} for rapidly decreasing diffeomorphisms
of the real line; cf. \cite{MR3132089} and \cite{MR3313140} for later developments).
The model space used for these groups are weighted mappings between Banach spaces.
In order to construct Lie groups of weighted diffeomorphisms on non-compact manifolds,
we need to define spaces of weighted vector fields.
The purpose of this paper, whose content is a part of the author's dissertation,
is to develop a framework for such spaces and tools to handle them efficiently.
In particular, we define and examine some kind of \emph{simultaneously} weighted functions.

As a motivating example, consider the direct product
\[
	M \ndef \R \times {\mathbb S}
\]
of the real line and the circle group. Then smooth vector fields
on $M$ can be identified with smooth functions
\[
	\gamma : \R \times \R \to \R^2
	: (x,y) \mapsto \gamma(x,y)
\]
which are $2\pi$-periodic in the $y$-variable.
To control the asymptotic behaviour of vector fields
(and the diffeomomorphisms arising from their flows)
at infinity,
it is natural to impose that $\gamma$ (and its partial derivatives)
decays polynomially as $x \to \pm \infty$ in the sense that, for each $n\in \N$,
\[
	x^n \gamma(x,y)
\]
is bounded for $(x,y)\in \R^2$ (and hence $\gamma$ tends to $0$ as $x\to \infty$).
Likewise, we could impose that $\gamma$ and all its partial derivatives
are bounded, or have exponential decay.
The preceding approach hinges on the very specific situation considered;
namely, that we have the local diffeomorphism $q\colon (t,s)\mapsto (t,e^{is})$ from $\R^2$
(on which vector fields can be identified with smooth functions $\R^2 \to \R^2$)
onto $\R\times {\mathbb S}$.
Of course, one would like to be able to describe weighted vector fields as just encountered
also without reference to $q$, and for general manifolds,
none of whose covering manifolds may admit a global chart.

To explain the basic idea of such a construction,
let $M$ be a manifold, $f : M \to \R$ a weight on $M$
and $X : M \to \Tang{M}$ a vector field.
There is no canonical way to express what it means that $X$ is bounded with respect to $f$.
In contrast, for a chart $\kappa$ for $M$
we perfectly understand what it means if the function $\VectFLok{X}{\kappa} = \dA{\kappa}{}{} \circ X \circ \kappa^{-1}$
is bounded with respect to the weight $f \circ \kappa^{-1}$.
So we may say that $X$ is bounded with respect to $f$
if all its localizations (with respect to an atlas $\mathcal{A}$) are so,
and define seminorms with respect to $f$ and an order of differentiation.
For a nonempty set $\cW \sub \R^M$ of weights, this leads to the definition of a topology on a subset of the product
$\prod_{\kappa \in \mathcal{A}}  \CF{U_\kappa}{\R^d}{\cW_\kappa}{\infty}$,
where $\cW_\kappa \ndef \set{f \circ \kappa^{-1}}{f \in \cW}$,
that generally is finer than the ordinary product topology.

It is efficient to follow an even more general approach.
First, we define a \emph{restricted product} for a family of locally convex spaces
when there exists a set $J$ such that each space has a set of generating seminorms that can be indexed over $J$,
and prove some results about these kind of spaces.
After that, we define \emph{weighted restricted products}.
These consist of functions that are defined on the disjoint union of open subsets of arbitrary normed spaces,
and are bounded w.r.t. weights which also are defined on this union.

Of particular interest is the question of whether operations between these spaces
that are defined factorwise are continuous or smooth.
We will see that many maps of this type behave quite well,
and their exact continuity and differentiability properties
(as recorded in Propositions \ref{prop:simultane_SP_BCinf0_Produkt}, \ref{prop:Simultane_Koor-Kompo_diffbar} and \ref{prop:Simultane_Inv-Kompo_glatt})
are the main results of this paper
and the backbone of the construction of weighted
diffeomorphism groups in \cite{Walter_Diss_p2}.

We mention that differentiable maps between
weighted sequence spaces isomorphic to $c_0(E)$ (with values in a Banach space $E$)
have also been studied by \cite{MR0271992} and \cite{MR0418164} to some extent,
and used to construct stable manifolds around hyperbolic fixed points of
time-discrete smooth dynamical systems on Banach manifolds (using Irwin's method).

Differentiable maps between locally convex direct sums of locally convex spaces
(into which spaces of compactly supported vector fields can be embedded)
were studied in \cite{MR2005280}.
They simplify the proofs for smoothness of the relevant non-linear mappings
in the construction of a Lie group structure on the diffeomorphism group
of a non-compact manifold (first treated in \cite{MR583436}),
see \cite{glockner_patched02}
and \cite{MR3328452} (where the method is extended to diffeomorphism groups of orbifolds).

The restricted products and differentiability properties
discussed in this article play an analogous role in the case of weighted diffeomorphism groups.
Our results on simultaneous superposition can be regarded as a substitute of the
familiar $\omega$-lemma for superposition on spaces of sections
(see, e.g., \cite{MR583436} or \cite{MR0248880})
in the weighted situation.
Finally, we mention that concepts of \enquote{boundedness} for vector fields
(and their covariant derivatives) can also be formulated in the context of bounded geometry,
and have been used to construct certain diffeomorphism groups in this setting
(see \cite{MR2343536}), using different methods.
\section{Definitions and previous results}
Before we start, we have to repeat some of the notation and results of \cite{MR2952176}.
We set $\cl{S} \ndef S \cup \sset{\infty}$ for $S \in \sset{\R, \N}$.
Other notation is introduced when it is first used.
\subsection{Spaces of weighted functions}
\begin{defi}
	Let $\SX$ and $\SY$ be normed spaces
	and $\UF \subseteq \SX$ an open nonempty set.
	For $k \in \N$ and a map $f: \UF \to \cl{\R}$, we define the quasinorm
	\[
		\hn{\cdot}{f}{k}
		:\FC{\UF}{\SY}{k}\to [0,\infty]
		: \phi \mapsto \sup\set{\abs{f(x)}\,\Opnorm{\FAbl[k]{\phi}(x)} }{x\in \UF}
	\]
	on the set of $k$-times Fréchet differentiable functions.
	Furthermore, for any nonempty set $\cW \sub \cl{\R}^\UF$
	and $k \in \cl{\N}$ we define the vector space
	\[
		\CF{\UF}{\SY}{\cW}{k}
		\ndef
		\{ \gamma\in\FC{\UF}{\SY}{k}:
			(\forall f\in\cW, \ell \in \N, \ell \leq k)\:
			\hn{\gamma}{f}{\ell} < \infty
		\}
	\]
	and notice that the seminorms $\hn{\cdot}{f}{\ell}$ induce a
	locally convex vector space topology on $\CF{\UF}{\SY}{\cW}{k}$.
	We call the elements of $\cW$ \emph{weights}
	and $\CF{\UF}{\SY}{\cW}{k}$ a \emph{space of weighted maps} or \emph{space of weighted functions}.

	Further, we define the \emph{maximal extension}
	$\ExtWeights{\GewFunk} \subseteq \cl{\R}^\UF$ of $\GewFunk$
	as the set of functions $f$ for which $\hn{\cdot}{f}{0}$ is a continuous seminorm
	on $\CcF{\UF}{\SY}{0}$, for each normed space $\SY$.
	Obviously $\GewFunk \subseteq \ExtWeights{\GewFunk}$
	and we can show that $\hn{\cdot}{f}{\ell}$ is a continuous seminorm on each $\CcF{\UF}{\SY}{k}$,
	provided that $f \in \ExtWeights{\GewFunk}$ and $\ell \leq k$.
\end{defi}
An important tool for dealing with higher differentiability orders is the following:
\begin{lem}[Reduction to lower order]
\label{lem:topologische_Zerlegung_von_CFk}
	Let $\SX$ and $\SY$ be normed spaces,
	$\UF \subseteq \SX$ an open nonempty set, $\GewFunk \subseteq \cl{\R}^\UF$,
	$k \in \N$ and $\gamma\in\FC{\UF}{\SY}{1}$. Then
	\[
		\gamma \in \CcF{\UF}{\SY}{k+1}
		\iff
		(\FAbl{\gamma}, \gamma)\in
		\CcF{\UF}{\Lin{\SX}{\SY}}{k} \times
			\CcF{\UF}{\SY}{0}.
	\]
	Moreover, the map
	\[
		\CcF{\UF}{\SY}{k+1}\to
		\CcF{\UF}{\Lin{\SX}{\SY}}{k} \times \CcF{\UF}{\SY}{0}
		:\gamma \mapsto (\FAbl{\gamma} , \gamma)
	\]
	is a topological embedding.
\end{lem}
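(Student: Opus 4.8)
The plan is to reduce the entire statement to a single identity between seminorms: for every order $j \geq 1$ and every weight $f \in \GewFunk$ one has $\hn{\gamma}{f}{j} = \hn{\FAbl{\gamma}}{f}{j-1}$. Once this is in hand, both the claimed equivalence and the embedding property follow by routine bookkeeping of seminorm indices.

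First I would record the underlying analytic fact. For each $j \geq 1$ there is a canonical isometric isomorphism
\[
	\Lin[j]{\SX}{\SY} \iso \Lin[j-1]{\SX}{\Lin{\SX}{\SY}}
\]
with respect to the operator norms, under which the higher Fréchet derivative $\FAbl[j]{\gamma}(x)$ is identified with $\FAbl[j-1]{(\FAbl{\gamma})}(x)$ for every $x \in \UF$; this is precisely the statement that iterated Fréchet derivatives are computed by iterating the derivative operator. Passing to operator norms gives $\Opnorm{\FAbl[j]{\gamma}(x)} = \Opnorm{\FAbl[j-1]{(\FAbl{\gamma})}(x)}$, and taking the weighted supremum over $x \in \UF$ yields the identity $\hn{\gamma}{f}{j} = \hn{\FAbl{\gamma}}{f}{j-1}$. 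Alongside this I would invoke the standard characterization of iterated differentiability: since $\gamma \in \FC{\UF}{\SY}{1}$ by hypothesis, $\gamma \in \FC{\UF}{\SY}{k+1}$ holds if and only if $\FAbl{\gamma} \in \FC{\UF}{\Lin{\SX}{\SY}}{k}$.

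Next I would assemble the equivalence. By definition, $\gamma \in \CcF{\UF}{\SY}{k+1}$ means that $\gamma \in \FC{\UF}{\SY}{k+1}$ and $\hn{\gamma}{f}{j} < \infty$ for all $f \in \GewFunk$ and all $0 \leq j \leq k+1$. Separating the index $j = 0$ and rewriting the conditions for $1 \leq j \leq k+1$ by means of the identity above, these finiteness requirements are equivalent to $\hn{\gamma}{f}{0} < \infty$ together with $\hn{\FAbl{\gamma}}{f}{m} < \infty$ for all $f \in \GewFunk$ and $0 \leq m \leq k$. Combined with the differentiability characterization, this says exactly that $\gamma \in \CcF{\UF}{\SY}{0}$ and $\FAbl{\gamma} \in \CcF{\UF}{\Lin{\SX}{\SY}}{k}$, which is the right-hand side of the asserted equivalence.

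For the topological embedding I would observe that the map $T : \gamma \mapsto (\FAbl{\gamma}, \gamma)$ is linear and injective, since its second component returns $\gamma$; it therefore suffices to show that $T$ induces the given topology. The product topology on the codomain is generated by the seminorms $\gamma \mapsto \hn{\FAbl{\gamma}}{f}{m}$ (for $f \in \GewFunk$, $0 \leq m \leq k$) coming from the first factor, together with $\gamma \mapsto \hn{\gamma}{f}{0}$ (for $f \in \GewFunk$) coming from the second. By the identity of the first step the former equal $\hn{\gamma}{f}{m+1}$, so the family obtained by pulling the generating seminorms of the product back along $T$ is precisely $\set{\hn{\cdot}{f}{j}}{f \in \GewFunk,\ 0 \leq j \leq k+1}$, the defining family of seminorms of $\CcF{\UF}{\SY}{k+1}$. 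Hence the initial topology induced by $T$ coincides with the given one and $T$ is a topological embedding onto its image. The only step demanding genuine care — the hard part — is confirming that the identification of multilinear maps is an isometry for the operator norm, so that the seminorm identity holds with no stray constants, and keeping the index shift $j \leftrightarrow j-1$ straight throughout; everything else is formal.
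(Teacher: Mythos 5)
Your proof is correct, and the paper itself gives no proof of this lemma (it is imported from the cited monograph \cite{MR2952176}), so there is nothing to compare against; your argument via the isometric identification $\Lin[j]{\SX}{\SY} \iso \Lin[j-1]{\SX}{\Lin{\SX}{\SY}}$, under which $\FAbl[j]{\gamma}(x)$ corresponds to $\FAbl[j-1]{(\FAbl{\gamma})}(x)$ and hence $\hn{\gamma}{f}{j} = \hn{\FAbl{\gamma}}{f}{j-1}$, is exactly the standard route. The bookkeeping of the equivalence and of the pulled-back generating seminorms is complete, and injectivity via the second component correctly upgrades the seminorm identity to a topological embedding.
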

Occasionally, we will need the following lemma. A more general version is stated and proved
in \cite[La.~3.4.16]{MR2952176}.
\begin{lem}\label{lem:gewichtete_Abb_Produktisomorphie-endl}
	Let $\SX$, $\SY$ and $\SZ$ be normed spaces, $\UF \subseteq \SX$ an open nonempty set,
	$k \in \cl{\N}$ and $\GewFunk \subseteq \cl{\R}^\UF$ nonempty.
	Then the map
	\[
		\CcF{\UF}{\SY \times \SZ}{k} \to \CcF{\UF}{ \SY}{k} \times \CcF{\UF}{ \SZ}{k}
		: \gamma \mapsto (\pi_{\SY} \circ \gamma, \pi_{\SZ} \circ \gamma)
	\]
	is an isomorphism of locally convex topological vector spaces.
\end{lem}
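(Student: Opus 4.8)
The plan is to exhibit the stated map as a linear bijection whose inverse is the obvious pairing $(\alpha,\beta)\mapsto\bigl(x\mapsto(\alpha(x),\beta(x))\bigr)$, and to prove that it is a homeomorphism by establishing an \emph{exact} identity between the generating seminorms on both sides. To this end I would equip $\SY\times\SZ$ with the maximum norm $\norm{(y,z)}=\max(\norm{y},\norm{z})$; since the usual product norms on $\SY\times\SZ$ are equivalent with universal constants (independent of the spaces and of the function considered), this particular choice only rescales seminorms by bounded factors and hence does not change the topology, so the statement is insensitive to it.

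First I would record the componentwise nature of differentiation. Since $\pi_\SY$ and $\pi_\SZ$ are continuous linear maps, for $\gamma\in\FC{\UF}{\SY\times\SZ}{k}$ the compositions $\pi_\SY\circ\gamma$ and $\pi_\SZ\circ\gamma$ lie in $\FC{\UF}{\SY}{k}$ resp.\ $\FC{\UF}{\SZ}{k}$, with $\FAbl[\ell]{(\pi_\SY\circ\gamma)}=\pi_\SY\circ\FAbl[\ell]{\gamma}$ for $\ell\leq k$, and similarly for $\SZ$. Conversely, for $\alpha\in\FC{\UF}{\SY}{k}$ and $\beta\in\FC{\UF}{\SZ}{k}$ the map $\gamma\ndef(\alpha,\beta)$ is $k$-times Fréchet differentiable, and under the canonical identification of its $\ell$-th derivative with a continuous $\ell$-linear map $\SX^\ell\to\SY\times\SZ$ one has $\FAbl[\ell]{\gamma}(x)(h_1,\dots,h_\ell)=\bigl(\FAbl[\ell]{\alpha}(x)(h_1,\dots,h_\ell),\,\FAbl[\ell]{\beta}(x)(h_1,\dots,h_\ell)\bigr)$. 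This is standard; alternatively it follows by induction on $\ell$ from the reduction in Lemma~\ref{lem:topologische_Zerlegung_von_CFk}. Thus, already on the level of merely differentiable maps, $\gamma\mapsto(\pi_\SY\circ\gamma,\pi_\SZ\circ\gamma)$ is a linear bijection $\FC{\UF}{\SY\times\SZ}{k}\to\FC{\UF}{\SY}{k}\times\FC{\UF}{\SZ}{k}$ with inverse the pairing.

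Next I would turn the derivative decomposition into a seminorm identity. For the maximum norm and fixed $x$, taking the supremum over $\norm{h_i}\leq 1$ and using $\sup_{\vec h}\max\bigl(a(\vec h),b(\vec h)\bigr)=\max\bigl(\sup_{\vec h}a,\sup_{\vec h}b\bigr)$ gives $\Opnorm{\FAbl[\ell]{\gamma}(x)}=\max\bigl(\Opnorm{\FAbl[\ell]{\alpha}(x)},\Opnorm{\FAbl[\ell]{\beta}(x)}\bigr)$. Multiplying by $\abs{f(x)}$ and taking the supremum over $x\in\UF$ then yields, for every $f\in\GewFunk$ and $\ell\leq k$, the identity $\hn{\gamma}{f}{\ell}=\max\bigl(\hn{\pi_\SY\circ\gamma}{f}{\ell},\hn{\pi_\SZ\circ\gamma}{f}{\ell}\bigr)$. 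In particular $\hn{\gamma}{f}{\ell}<\infty$ holds if and only if both entries are finite, so the bijection above restricts to a linear bijection between $\CcF{\UF}{\SY\times\SZ}{k}$ and $\CcF{\UF}{\SY}{k}\times\CcF{\UF}{\SZ}{k}$, carrying one space's defining finiteness conditions exactly onto the other's.

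Finally, the same identity delivers continuity in both directions at once: each defining seminorm $\hn{\cdot}{f}{\ell}$ on the source equals the maximum of the two corresponding seminorms on the target, hence is dominated by their sum, and conversely each target seminorm (such as $\hn{\pi_\SY\circ\cdot}{f}{\ell}$) is dominated by $\hn{\cdot}{f}{\ell}$. Thus both the map and its inverse are continuous, which proves the asserted isomorphism. For $k=\infty$ the argument is unchanged, the conditions and estimates simply ranging over all $\ell\in\N$. I expect the only point requiring genuine care to be the componentwise differentiability together with the isometric identification of the iterated-derivative spaces under the maximum norm; once that is in place, the remainder is the elementary interchange of $\sup$ and $\max$.
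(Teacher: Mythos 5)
Your proposal is correct and complete. Note that the paper itself gives no proof of this lemma but only refers to \cite[La.~3.4.16]{MR2952176}, so there is no in-text argument to compare against; your direct proof — componentwise differentiation, the exact identity $\hn{\gamma}{f}{\ell}=\max\bigl(\hn{\pi_\SY\circ\gamma}{f}{\ell},\hn{\pi_\SZ\circ\gamma}{f}{\ell}\bigr)$ for the maximum norm on $\SY\times\SZ$, and the resulting two-sided seminorm estimates — is exactly the standard argument one would expect the cited reference to use, and all steps (in particular the interchange of $\sup$ and $\max$, and the insensitivity to the choice of equivalent product norm) check out.
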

\subsection{Differentialbility and smooth maps between weighted function spaces}
We recall basic definitions for the differential calculus for maps between locally convex spaces
that is known as Kellers $C^k_c$-theory.
More information about this calculus can be found in
\cite{MR0177277}, \cite{MR0440592}, \cite{MR830252}, \cite{MR583436}, \cite{MR1911979} or \cite{MR2261066}.
\begin{defi}
	Let $\SX$ and $\SY$ be locally convex spaces,
	$\UF \subseteq \SX$ an open nonempty set
	and $f:\UF\to\SY$ a map.
	We say that $f$ is \emph{$\ConDiff{}{}{1}$} if for all $u\in\UF$ and $x\in\SX$,
	the directional derivative
	\[
		\lim_{\substack{t\to 0\\ t \neq 0}}\frac{f(u + t x) - f(u)}{t}
			\defn \dA{f}{u}{x},
	\]
	exists and the map $\dA{f}{}{}  : \UF \times \SX \to \SY$ is continuous.
	Inductively, for a $k\in\N$ we call $f$ \emph{$\ConDiff{}{}{k}$}
	if $f$ is $\ConDiff{}{}{1}$ and
	$\dA{f}{}{} :\UF\times\SX\to\SY$
	is a $\ConDiff{}{}{k - 1}$-map.
	We write $\ConDiff{\UF}{\SY}{k}$ for the set of $k$-times differentiable maps.
\end{defi}
The Continuity of parameter-dependent integrals is an useful tool
when dealing with differential quotients.
Here the integral is a weak integral; see \cite[Sec.~3]{MR2319574} for details.
In particular, the following is stated (and proved) in Prop.~3.5.
\begin{lem}[Continuity of parameter-dependent integrals]
\label{lem:Stetigkeit_parameterab_Int}
	Let $P$ be a topological space, $\SX$ a locally convex space, $I\subseteq\R$ a proper interval
	and $a, b \in I$. Further, let $f:P\times I \to \SX$ be a continuous map
	such that the weak integral
	\[
		\Rint{a}{b}{f(p,t)}{t} \defn g(p)
	\]
	exists for all $p\in P$. Then the map $g:P\to\SX$ is continuous.
\end{lem}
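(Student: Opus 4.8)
The plan is to reduce continuity of $g$ to a seminorm-wise estimate and then exploit compactness of the interval of integration. Fix a point $p_0 \in P$, a continuous seminorm $q$ on $\SX$ and $\eps > 0$; since the continuous seminorms generate the topology of $\SX$ and a neighbourhood of $g(p_0)$ may be taken of the form $\set{y}{q(y - g(p_0)) < \eps}$ (finite intersections being handled by intersecting the resulting neighbourhoods of $p_0$), it suffices to produce a neighbourhood $U$ of $p_0$ with $q(g(p) - g(p_0)) < \eps$ for all $p \in U$. By linearity of the weak integral (both integrals exist by hypothesis) we have $g(p) - g(p_0) = \Rint{a}{b}{(f(p,t) - f(p_0,t))}{t}$, so everything rests on estimating the $q$-seminorm of this integral. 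We may assume $a \leq b$, the opposite case following by a sign change.

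First I would record the basic estimate $q\bigl(\Rint{a}{b}{h(t)}{t}\bigr) \leq \Rint{a}{b}{q(h(t))}{t}$, valid whenever the weak integral on the left exists and $t \mapsto q(h(t))$ is integrable. This follows from Hahn--Banach: setting $y \ndef \Rint{a}{b}{h(t)}{t}$, one chooses a continuous linear functional $\lambda$ with $\abs{\lambda(x)} \leq q(x)$ for all $x \in \SX$ and $\lambda(y) = q(y)$ (extend $\alpha y \mapsto \alpha\, q(y)$ off the line $\K y$, where it is dominated by $q$), so that the defining property of the weak integral yields $q(y) = \lambda(y) = \Rint{a}{b}{\lambda(h(t))}{t} \leq \Rint{a}{b}{q(h(t))}{t}$. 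Applied to $h(t) = f(p,t) - f(p_0,t)$ (whose $q$-seminorm is continuous in $t$, hence Riemann integrable on the compact interval $[a,b]$), this gives
\[
	q(g(p) - g(p_0)) \leq \Rint{a}{b}{q(f(p,t) - f(p_0,t))}{t}.
\]

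It remains to make the right-hand side small uniformly in $t$, which is the heart of the matter. Consider $\phi : P \times [a,b] \to \R$, $\phi(p,t) \ndef q(f(p,t) - f(p_0,t))$; it is continuous (composing $f$, the seminorm $q$ and subtraction) and vanishes identically on the compact slice $\sset{p_0} \times [a,b]$. A tube-lemma argument then applies: for each $t \in [a,b]$ continuity of $\phi$ at $(p_0,t)$ yields a neighbourhood $U_t$ of $p_0$ and a relatively open $V_t \ni t$ with $\phi < \eps/(\abs{b-a}+1)$ on $U_t \times V_t$; the $V_t$ cover the compact interval $[a,b]$, so finitely many $V_{t_1}, \dots, V_{t_n}$ suffice, and $U \ndef \bigcap_{i=1}^n U_{t_i}$ is a neighbourhood of $p_0$ on which $\phi(p,t) < \eps/(\abs{b-a}+1)$ for all $t \in [a,b]$. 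Integrating over $[a,b]$ bounds the right-hand side of the displayed inequality by $\eps$, proving continuity of $g$ at $p_0$, hence on all of $P$.

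The only genuinely delicate step is this last uniform estimate via compactness: continuity of $\phi$ alone gives smallness only near each individual point $(p_0,t)$, and it is the compactness of the bounded interval $[a,b]$ that lets one patch the local neighbourhoods $U_t$ into a single neighbourhood of $p_0$ serving all $t$ simultaneously. Everything else is routine once the defining property of the weak integral—that continuous linear functionals pass inside the integral—is invoked, both for linearity and for the Hahn--Banach seminorm estimate.
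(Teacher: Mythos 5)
Your proof is correct, but note that the paper does not prove this lemma itself: it is imported verbatim from \cite[Prop.~3.5]{MR2319574}. Your argument (the Hahn--Banach seminorm estimate $q(\int_a^b h) \leq \int_a^b q(h)$ for weak integrals, combined with a tube-lemma/compactness argument to make $q(f(p,t)-f(p_0,t))$ small uniformly in $t$) is essentially the standard proof given in that reference, and all the steps -- existence of the weak integral of the difference, continuity of the dominating functional, and integrability of $t \mapsto q(h(t))$ -- are justified as you state them.
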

\subsubsection{Smooth maps between weighted function spaces}
We give two examples of smooth maps between weighted function spaces
which we will adapt to the case of weighted restricted products.
\paragraph{Composition of weighted functions}
The following result about the differentiability of composition is proved in
\cite[Sec.~4.1.1]{MR2952176}, with slightly different notation.
More precisely, the following are the assertions of La.~4.1.3 and Prop.~4.1.7.
Here, $\Disk$ denotes the unit ball of $\K \in \sset{\R, \C}$.
\begin{prop}\label{prop:Kompo_Koord_glatt}
	Let $\SX$ and $\SY$ be normed spaces, $\UF, \VF, \WF \subseteq \SX$ open nonempty subsets such that
	$\VF + \UF \subseteq \WF$ and $\VF$ is balanced,
	$\GewFunk \subseteq \cl{\R}^\WF$ with $1_\WF \in \cW$
	and $k, \ell \in \cl{\N}$.
	Then
	\[
		\compIdDiffKLcW{\SY}{ k}{ \ell}
		:\CcF{\WF}{\SY}{k + \ell + 1} \times
			\CcFo{\UF}{\VF}{k}
		\to \CcF{\UF}{\SY}{k}
		: (\gamma,\eta) \mapsto \gamma\circ(\eta + \id{\UF})
	\]
	is defined and a $\ConDiff{}{}{\ell}$-map.
	If $\ell > 0$, then it has the directional derivative
	\begin{equation}\label{id:Ableitung_Kompo}
		\dA{ \compIdDiffKLcW{\SY}{ k}{ \ell} }{\gamma, \eta}{\gamma_1, \eta_1}
		= \compIdDiffKLcW{\Lin{\SX}{\SY}}{ k}{\ell - 1}(\FAbl{\gamma}, \eta) \eval \eta_1
		+ \compIdDiffKLcW{\SY}{ k}{ \ell}(\gamma_1, \eta).
	\end{equation}
	In particular, $\compIdSmoothLcW{\SY}{k} \ndef \compIdDiffKLcW{\SY}{ k}{ \infty}$
	and $\compIdcW{\SY} \ndef \compIdDiffKLcW{\SY}{ \infty}{ \infty}$ are smooth.

	Further, for $\gamma, \gamma_0 \in \CcF{\WF}{\SY}{0} \cap \BC{\WF}{\SY}{1}$
	and suitable $\eta, \eta_0 \in \CcF{\UF}{\VF}{0}$, $f \in \GewFunk$ and $x\in\UF$
	the following estimates hold:
	\begin{equation}\label{est:Funktionswerte_Gewicht_K-Kompo}
		\abs{f(x)}\,\norm{\gamma \circ (\eta + \id{\SX})(x)}
		\leq \abs{f(x)}\, (\hn{\gamma}{1_{\sset{x} + \Disk\eta(\UF)}}{1}\,\norm{\eta(x)} + \norm{\gamma(x)})
	\end{equation}
	and
	\begin{equation}\label{est:f,0-Norm_Differenz_Kompo}
		\begin{multlined}[0.8\columnwidth]
			\hn{\compIdRaw(\gamma , \eta) -
				\compIdRaw(\gamma_0 , \eta_0)}{f}{0}
			\leq
				\hn{\gamma}{1_\WF}{1} \hn{\eta - \eta_0}{f}{0}\\
				+ \hn{\gamma - \gamma_0}{1_\WF}{1} \hn{\eta_0}{f}{0}
				+ \hn{\gamma - \gamma_0}{f}{0}
		\end{multlined}
		.
	\end{equation}
\end{prop}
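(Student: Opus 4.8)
The plan is to reduce everything to the two pointwise estimates stated at the end of the proposition and to the reduction Lemma~\ref{lem:topologische_Zerlegung_von_CFk}, using an induction on the differentiation order $k$ for well-definedness and the case $\ell=0$, and a second induction on $\ell$ for the higher differentiability. I would establish the estimates first, since they carry the analysis. As $\VF$ is balanced we have $0\in\VF$, hence $\UF\sub\WF$, and for $x\in\UF$ the segment $\sset{x}+\Disk\eta(\UF)$ lies in $\UF+\VF\sub\WF$; the fundamental theorem of calculus along it gives $\gamma\circ(\eta+\id{\UF})(x)-\gamma(x)=\int_0^1\FAbl{\gamma}(x+s\eta(x))\eval\eta(x)\,ds$, and bounding the integrand by $\hn{\gamma}{1_{\sset{x}+\Disk\eta(\UF)}}{1}\,\norm{\eta(x)}$ yields estimate~\ref{est:Funktionswerte_Gewicht_K-Kompo}. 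Estimate~\ref{est:f,0-Norm_Differenz_Kompo} then follows from the telescoping identity $\compIdRaw(\gamma,\eta)-\compIdRaw(\gamma_0,\eta_0)=\big(\gamma\circ(\eta+\id{\UF})-\gamma\circ(\eta_0+\id{\UF})\big)+(\gamma-\gamma_0)\circ(\eta_0+\id{\UF})$, treating the first bracket by the same mean-value device and the second by estimate~\ref{est:Funktionswerte_Gewicht_K-Kompo} applied to $\gamma-\gamma_0$.

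For well-definedness and the case $\ell=0$ I would induct on $k$. When $k=0$, estimate~\ref{est:Funktionswerte_Gewicht_K-Kompo} (with $1_\WF\in\GewFunk$ controlling the first summand) shows $\compIdRaw(\gamma,\eta)\in\CcF{\UF}{\SY}{0}$, and estimate~\ref{est:f,0-Norm_Differenz_Kompo} shows that $\compIdDiffKLcW{\SY}{0}{0}$ is continuous. For the step I apply Lemma~\ref{lem:topologische_Zerlegung_von_CFk}: by the chain rule $\FAbl{(\gamma\circ(\eta+\id{\UF}))}=\big(\FAbl{\gamma}\circ(\eta+\id{\UF})\big)\eval(\FAbl{\eta}+\id{\SX})$, so $\compIdDiffKLcW{\SY}{k}{0}$ factors through the topological embedding of Lemma~\ref{lem:topologische_Zerlegung_von_CFk} into the pair whose $\CcF{\UF}{\Lin{\SX}{\SY}}{k-1}$-component is $\compIdDiffKLcW{\Lin{\SX}{\SY}}{k-1}{0}(\FAbl{\gamma},\eta)\eval(\FAbl{\eta}+\id{\SX})$ and whose $\CcF{\UF}{\SY}{0}$-component is $\compIdDiffKLcW{\SY}{0}{0}(\gamma,\eta)$. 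Splitting the first component into $\compIdRaw(\FAbl{\gamma},\eta)\eval\FAbl{\eta}$ and $\compIdRaw(\FAbl{\gamma},\eta)$ (composition with $\id{\SX}$ being trivial) avoids ever treating the unbounded constant $\id{\SX}$ as a weighted function; well-definedness and continuity then follow from the inductive hypothesis together with the standard continuity of pointwise composition of operator-valued weighted maps (a multiplication estimate, available since $1_\WF\in\GewFunk$).

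With $\ell=0$ in hand for all $k$, I would prove by induction on $\ell$ that $\compIdDiffKLcW{\SY}{k}{\ell}$ is $\ConDiff{}{}{\ell}$ for every $k$. To compute the directional derivative I split the difference quotient as $\frac{1}{t}\big(\gamma\circ(\eta+t\eta_1+\id{\UF})-\gamma\circ(\eta+\id{\UF})\big)+\gamma_1\circ(\eta+t\eta_1+\id{\UF})$, rewrite the first summand by the fundamental theorem of calculus as $\int_0^1\compIdDiffKLcW{\Lin{\SX}{\SY}}{k}{\ell-1}(\FAbl{\gamma},\eta+st\eta_1)\eval\eta_1\,ds$, and pass to the limit $t\to0$ by Lemma~\ref{lem:Stetigkeit_parameterab_Int} together with the continuity of the composition maps already proved; this produces formula~\ref{id:Ableitung_Kompo} as a limit in $\CcF{\UF}{\SY}{k}$. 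To upgrade this to $\ConDiff{}{}{\ell}$ it then suffices that $\dA{\compIdDiffKLcW{\SY}{k}{\ell}}{}{}$ be $\ConDiff{}{}{\ell-1}$: its first term $\compIdDiffKLcW{\Lin{\SX}{\SY}}{k}{\ell-1}(\FAbl{\gamma},\eta)\eval\eta_1$ is $\ConDiff{}{}{\ell-1}$ because $\compIdDiffKLcW{\Lin{\SX}{\SY}}{k}{\ell-1}$ is so by the inductive hypothesis, $\gamma\mapsto\FAbl{\gamma}$ is continuous linear (Lemma~\ref{lem:topologische_Zerlegung_von_CFk}), and $(A,v)\mapsto A\eval v$ is continuous bilinear, while its second term is $\ConDiff{}{}{\ell-1}$ because $\compIdDiffKLcW{\SY}{k}{\ell}$ equals $\compIdDiffKLcW{\SY}{k}{\ell-1}$ pre-composed with the continuous linear inclusion $\CcF{\WF}{\SY}{k+\ell+1}\hookrightarrow\CcF{\WF}{\SY}{k+\ell}$. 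The cases $\ell=\infty$ and $k=\infty$ follow by intersecting over finite orders. The step I expect to be the real obstacle is not the formal induction but securing the limit of the difference quotient in the weighted topology \emph{simultaneously in all seminorms}: pointwise convergence is immediate, and it is exactly the integral representation together with Lemma~\ref{lem:Stetigkeit_parameterab_Int} that upgrades it to convergence in $\CcF{\UF}{\SY}{k}$; a companion subtlety, hidden in the word \enquote{suitable}, is to check that $\eta+t\eta_1$ still maps into $\VF$ (so lies in $\CcFo{\UF}{\VF}{k}$) for small $t$, which is what makes the domain open and the difference quotient defined at all.
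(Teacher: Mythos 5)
Your proposal is correct, and it follows essentially the same strategy as the source: the paper itself does not prove this proposition but imports it from \cite[La.~4.1.3, Prop.~4.1.7]{MR2952176}, and both that proof and the paper's own proof of the simultaneous analogue (Proposition~\ref{prop:Simultane_Koor-Kompo_diffbar}) run exactly your double induction — first on $k$ via the reduction Lemma~\ref{lem:topologische_Zerlegung_von_CFk} and the two pointwise estimates for well-definedness and continuity, then on $\ell$ via the integral representation of the difference quotient, Lemma~\ref{lem:Stetigkeit_parameterab_Int}, and the derivative formula \ref{id:Ableitung_Kompo}. Your attention to the two genuine subtleties (splitting off $\idco$ so the unbounded constant never has to be a weighted function, and checking that $\eta+t\eta_1$ stays in the open set $\CcFo{\UF}{\VF}{k}$) matches how the cited source handles them.
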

\paragraph{Inversion of weighted functions}
The results about inversion in \cite[Sec.~4.2.1]{MR2952176} don't allow the treatment
of weighted functions that are defined on a subset of a vector space.
Since we encounter such functions when we are treating localized vector fields,
better tools had to be provided.
The following assertions are special cases of the more general elaborations
in \cite[Sec. 4.2.1]{arxiv_1006.5580v3}.
\begin{prop}\label{prop:Zsf_Inversion_gewAbb}
	Let $\SX$ be a Banach space, $U, V \sub \SX$ open nonempty subsets
	such that $U$ is convex and there exists $r > 0$ with $V + \Ball{0}{r} \sub U$.
	Further, let $\GewFunk \subseteq \cl{\R}^{U}$ with $1_{U} \in \GewFunk$,
	$\tau \in ]0, 1[$ and
	\[
		\mathcal{D}_\tau \ndef
		\left\set{\phi \in \CcF{U}{\SX}{\infty} }{%
			\hn{\phi}{1_U}{1} < \tau
			\text{ and }
			\hn{\phi}{1_U}{0} < \frac{r}{2} (1 - \tau)
		\right}
		.
	\]
	Then the map
	\[
		\InvIdcW{V} : \mathcal{D}_\tau \to \CcF{V}{\SX}{\infty} :
		\phi \mapsto \rest{(\phi + \id{U})^{-1}}{V} - \id{V}
	\]
	is defined and smooth. In particular, for $\phi \in \mathcal{D}_\tau$ and $\phi_1 \in \CcF{U}{\SX}{\infty}$ we have that
	\begin{equation}\label{id:Ableitung_Inversion}
		\dA{\InvIdcW{V}}{\phi}{\phi_1}
		= - \compIdcW{\SX}(\QuasiInv(\FAbl{\phi}) \eval \phi_1 + \phi_1,  \InvIdcW{V}(\phi) ).
	\end{equation}
	and
	\begin{equation}\label{id:Differential_der_inversen_Abb}
		\FAbl{\, \InvIdcW{V}(\phi)}
			= (\FAbl{\phi} \MaMu \QuasiInv(- \FAbl{\phi}) - \FAbl{\phi})
					\circ(\InvIdcW{V}(\phi) + \id{V})
		;
	\end{equation}
	here $\QuasiInv$ denotes the quasi-inversion of the algebra $\CcF{ U }{ \Lin{\SX}{\SX} }{\infty}$
	(which arises as the the superposition with $\QuasiInv_{\Lin{\SX}{\SX}}$
	and is discussed in \cite[Sec.~3.3.3.3 and App.~C]{MR2952176}). 	Further, for $\psi \in \mathcal{D}_\tau$, $f \in \GewFunk$ and $x \in V$,
	the estimates
	\begin{equation}\label{est:Abschaetzung_gewichteter_FWert_der_K-Inversion}
		\abs{f(x)}\,\norm{\InvIdcW{V}(\phi)(x)}
		\leq
		\frac{\abs{f(x)}\,\norm{\phi(x)}}{1 - \hn{\phi }{1_{U}}{1}}
	\end{equation}
	and
	\begin{equation}\label{est:f0-norm_Diff_KoorInv}
		\hn{\InvIdcW{V}(\psi) - \InvIdcW{V}(\phi)}{f}{0}
		\leq
		\tfrac{1 }{ 1 - \hn{\psi}{1_U}{1} }
		\left(
			\hn{\phi - \psi}{1_U}{1} \tfrac{\hn{\phi}{f}{0} }{ 1 - \hn{\phi}{1_U}{1}}
			+   \hn{\phi - \psi}{f}{0}
		\right).
	\end{equation}
	hold.
\end{prop}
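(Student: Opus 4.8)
The plan is to reduce everything to the study of the map $g \ndef \phi + \id{U}$ and its inverse. First I would observe that $\mathcal D_\tau$ is open in $\CcF{U}{\SX}{\infty}$: since $1_U \in \GewFunk$, the seminorms $\hn{\cdot}{1_U}{0}$ and $\hn{\cdot}{1_U}{1}$ are continuous and the two defining inequalities are strict. For fixed $\phi \in \mathcal D_\tau$, the bound $\hn{\phi}{1_U}{1} < \tau < 1$ together with convexity of $U$ gives, via the mean value inequality, that $g$ is $(1-\tau)$-expansive, hence injective. For each $x \in V$ the contraction $y \mapsto x - \phi(y)$ (Lipschitz constant $\hn{\phi}{1_U}{1} < \tau$) maps the ball $\clBall{x}{s}$ with $s \ndef \frac{r}{2}(1-\tau)$ into itself, because $\clBall{x}{s} \sub \Ball{x}{r} \sub U$ and $\hn{\phi}{1_U}{0} < s$; Banach's fixed point theorem then produces the unique preimage $g^{-1}(x) \in U$. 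Finally $\Opnorm{\FAbl{\phi}(x)} \le \hn{\phi}{1_U}{1} < 1$ makes $\FAbl{g}(x) = \id{\SX} + \FAbl{\phi}(x)$ invertible by a Neumann series, so the Banach inverse function theorem renders $\rest{g^{-1}}{V}$, and hence $\InvIdcW{V}(\phi) = \rest{g^{-1}}{V} - \id{V}$, a Fréchet-smooth map.

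The order-$0$ estimates follow from the pointwise identity $\InvIdcW{V}(\phi)(x) = -\phi(g^{-1}(x))$. Writing $z \ndef g^{-1}(x)$ and using $\norm{\phi(z) - \phi(x)} \le \hn{\phi}{1_U}{1}\,\norm{z - x}$ on the segment $[x,z] \sub U$, one solves for $\norm{z - x}$ and, after multiplying by $\abs{f(x)}$, obtains \eqref{est:Abschaetzung_gewichteter_FWert_der_K-Inversion}; in particular $\hn{\InvIdcW{V}(\phi)}{f}{0} < \infty$ for all $f \in \GewFunk$. The two-point version of the same computation, comparing $g^{-1}(x)$ with $\tilde g^{-1}(x)$ for $\tilde g = \psi + \id{U}$ and inserting the bound just obtained, yields \eqref{est:f0-norm_Diff_KoorInv}; this already shows that $\InvIdcW{V}$ is locally Lipschitz for the order-$0$ seminorms.

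Next I would prove $\InvIdcW{V}(\phi) \in \CcF{V}{\SX}{\infty}$ by induction on $k$ using the reduction Lemma~\ref{lem:topologische_Zerlegung_von_CFk}: given $\InvIdcW{V}(\phi) \in \CcF{V}{\SX}{k}$ it suffices to place $\FAbl{\InvIdcW{V}(\phi)}$ in $\CcF{V}{\Lin{\SX}{\SX}}{k}$. Here the differential identity \eqref{id:Differential_der_inversen_Abb} is the crucial structural input: it is $\FAbl{g^{-1}}(x) = (\id{\SX} + \FAbl{\phi}(z))^{-1}$ rewritten through the quasi-inversion of the algebra $\CcF{U}{\Lin{\SX}{\SX}}{\infty}$, exhibiting $\FAbl{\InvIdcW{V}(\phi)}$ as the weighted composition $\Psi \circ (\InvIdcW{V}(\phi) + \id{V})$ with $\Psi \ndef \FAbl{\phi}\MaMu\QuasiInv(-\FAbl{\phi}) - \FAbl{\phi} \in \CcF{U}{\Lin{\SX}{\SX}}{\infty}$. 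Since $\sup_{x}\norm{\InvIdcW{V}(\phi)(x)} \le s < r$, the inner map has image in a balanced ball $\Ball{0}{r'}$ with $\Ball{0}{r'} + V \sub U$, and $1_U \in \GewFunk$, so Proposition~\ref{prop:Kompo_Koord_glatt} (applied with $\compIdDiffKLcW{\Lin{\SX}{\SX}}{k}{0}$) gives $\Psi \circ (\InvIdcW{V}(\phi) + \id{V}) \in \CcF{V}{\Lin{\SX}{\SX}}{k}$, closing the induction.

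Finally, for smoothness of the operator $\InvIdcW{V}$ I would show it is $\ConDiff{}{}{1}$ with directional derivative \eqref{id:Ableitung_Inversion} and then bootstrap. The candidate is obtained by differentiating at $t = 0$ the relation $h(t) + (\phi + t\phi_1)(h(t)) = x$, where $h(t) \ndef (\phi + t\phi_1 + \id{U})^{-1}(x)$: this gives $\AblCurve{t} h(t) = -(\id{\SX} + \FAbl{\phi}(z))^{-1}\phi_1(z)$ with $z = g^{-1}(x)$, which is exactly the value at $x$ of $-\compIdcW{\SX}(\QuasiInv(\FAbl{\phi})\MaMu\phi_1 + \phi_1,\, \InvIdcW{V}(\phi))$. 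The right-hand side is a composite of maps already known to be smooth — the continuous linear $\phi \mapsto \FAbl{\phi}$, quasi-inversion on the algebra, the continuous bilinear multiplication, the composition operator $\compIdcW{\SX}$ of Proposition~\ref{prop:Kompo_Koord_glatt}, and $\InvIdcW{V}$ itself — so once the $\ConDiff{}{}{1}$ step is established an induction on the differentiability order immediately upgrades $\InvIdcW{V}$ to a $\ConDiff{}{}{\infty}$-map. I expect the main obstacle to be precisely that $\ConDiff{}{}{1}$ step, namely showing that the difference quotients $t^{-1}(\InvIdcW{V}(\phi + t\phi_1) - \InvIdcW{V}(\phi))$ converge to the candidate \emph{in the weighted topology of} $\CcF{V}{\SX}{\infty}$, not merely pointwise; I would control this by writing the quotient as a parameter-integral of the inner derivative and invoking Lemma~\ref{lem:Stetigkeit_parameterab_Int} together with the estimates \eqref{est:Abschaetzung_gewichteter_FWert_der_K-Inversion} and \eqref{est:f0-norm_Diff_KoorInv} at each differentiation order.
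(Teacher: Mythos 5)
The paper does not actually prove this proposition in situ: it is imported from the more general treatment in \cite[Sec.\ 4.2.1]{arxiv_1006.5580v3}. Your strategy is nonetheless exactly the architecture the paper uses for the simultaneous analogue \refer{prop:Simultane_Inv-Kompo_glatt}: pointwise well-definedness of the inverses via the Banach fixed point theorem and injectivity from the mean value inequality on the convex set $U$; the order-zero estimates \refer{est:Abschaetzung_gewichteter_FWert_der_K-Inversion} and \refer{est:f0-norm_Diff_KoorInv} from the identity $\InvIdcW{V}(\phi)(x) = -\phi\bigl((\phi+\id{U})^{-1}(x)\bigr)$; passage from order $k$ to $k+1$ by combining \refer{lem:topologische_Zerlegung_von_CFk} with \refer{id:Differential_der_inversen_Abb}, quasi-inversion and the composition operator of \refer{prop:Kompo_Koord_glatt} (your observation that $\sup_{x\in V}\norm{\InvIdcW{V}(\phi)(x)}\leq \tfrac{r}{2}<r$ is precisely what makes that composition admissible); and finally $\ConDiff{}{}{1}$ plus bootstrap via \refer{id:Ableitung_Inversion}. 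I consider the approach sound.

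Two steps need more care than your sketch gives them. First, the $\ConDiff{}{}{1}$ argument requires the candidate derivative $(\phi,\phi_1)\mapsto -\compIdcW{\SX}(\QuasiInv(\FAbl{\phi})\eval\phi_1+\phi_1,\,\InvIdcW{V}(\phi))$ to be a \emph{continuous} map into $\CcF{V}{\SX}{k}$ for every $k$, and this presupposes continuity of $\InvIdcW{V}$ itself in all orders; your induction only establishes membership of $\InvIdcW{V}(\phi)$ in $\CcF{V}{\SX}{k}$, and local Lipschitz continuity only for the order-zero seminorms. You must therefore carry the continuity of the operator through the same induction — the identity \refer{id:Differential_der_inversen_Abb} does this for you, since $\compIdDiffKLcW{\Lin{\SX}{\SX}}{k}{0}$ and the quasi-inversion superposition are continuous. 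Second, you cannot literally write the difference quotient as a parameter-integral of $\dA{\InvIdcW{V}}{}{}$ without circularity: the correct route is to differentiate the fixed-point equation for $h(t)=(\phi+t\phi_1+\id{U})^{-1}(x)$ pointwise, obtain $h(t)-h(0)=t\Mint{h'(st)}{s}$ from the one-variable fundamental theorem, recognize the integrand as the value at $x$ of the candidate map at $(\phi+st\phi_1,\phi_1)$, and only then establish the existence of the weak integral in the weighted space and invoke \refer{lem:Stetigkeit_parameterab_Int}. With these two repairs your outline yields a complete proof.
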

\section{A superposition operator on weighted functions}

Before we can turn our attention to restricted products,
we examine whether a function $\Xi : U \times V\to Z$ induces a superposition operation
$\gamma \mapsto \Xi \circ (\id{U}, \gamma)$ on weighed functions.
We show that this is the case if $0\in V$, $\Xi$ maps $U \times \sset{0}$ to $0$,
and if the size of the derivatives of $\Xi$ can be covered with the weights,
see \ref{cond:est_weights_SP} for the precise phrasing.
In \refer{prop:simultane_SP_BCinf0_Produkt},
we will adapt this result to weighted restricted products.

In \cite[La. 6.2.14]{MR2952176},
a similar result was proved, but for a very different sort of weighted function space.
In contrast to assertions about superposition operators in \cite{MR2952176},
we use a more quantitative approach.

\subsection{Estimates for higher derivatives}
We give estimates for the higher derivatives of a function of two variables,
provided it is linear in its second argument.
We also turn to more special cases of such functions.
\begin{lem}	Let $\SX$, $\SY$ and $\SZ$ be normed spaces,
	$\UF \subseteq \SX$ an open nonempty set,
	$k \in \cl{\N}^*$ and $\Xi \in \FC{\UF \times \SY}{\SZ}{k}$
	a map that is linear in its second argument.
	Further, let $\ell \in \N$ with $\ell \leq k$, $x \in \UF$ and $y \in \SY$.
	\begin{assertions}
		\item\label{enum1:Erstes_par_Diff-Abb_linear_2Arg}
		The map $\parFAbl[\ell]{1}{\Xi}$
		is linear in the second argument.
		Hence $ \parFAbl[\ell]{1}{\Xi}(\UF \times \sset{0}) = \sset{0} $ and (if $\ell < k$)
		\[
			\label{id:Ableitung_Abb_linear_2Arg}
			\tag{\ensuremath{\dagger}}
			\AblCurve{t} \parFAbl[\ell]{1}{\Xi}(x + t h_1, y + t h_2)
			= \parFAbl[\ell]{1}{\Xi}(x,  h_2) + \parFAbl[\ell + 1]{1}{\Xi}(x, y) \neg h_1.
		\]
		Here, for an $(m + 1)$-linear map $b : E_1 \times \dotsb \times E_{m + 1} \to F$, for $h \in E_{m + 1}$
		we let $b \neg h$ denote the $m$-linear map
		$E_1 \times \dotsb \times E_m \to F : (x_1, \dotsc, x_m) \mapsto b(x_1, \dotsc, x_m, h)$.

		\item\label{enum1:Formel-est_hohe_Diff-Abb_linear_2Arg}
		Suppose that $\ell \geq 1$.
		Let $h^1,\dotsc, h^\ell \in \SX \times \SY$ with $h^j = (h^j_1, h^j_2)$. Then the identity
		\[
			\FAbl[\ell]{\Xi}(x, y)\eval(h^1,\dotsc, h^\ell)
			= \parFAbl[\ell]{1}{\Xi}(x, y)\eval(h^1_1,\dotsc, h^\ell_1)
			+ \sum_{j = 1}^\ell \parFAbl[\ell - 1]{1}{\Xi}(x, h^j_2)\eval \widehat{h_1^j}
		\]
		holds, where $\widehat{h_1^j} \ndef (h_1^1, \dotsc, h_1^{j - 1}, h_1^{j + 1}, \dotsc, h_1^\ell)$.
		In particular,
		\[
			\label{est:norm_l-te_Ableitung-Abb_linear_2Arg}
			\tag{\ensuremath{\dagger\dagger}}
			\Opnorm{\FAbl[\ell]{\Xi}(x, y)}
			\leq \ell \Opnorm{\parFAbl[\ell - 1]{1}{\Xi}(x, \cdot)}
			+ \Opnorm{\parFAbl[\ell]{1}{\Xi}(x, \cdot)} \norm{y}.
		\]

		\item\label{enum1:Abb_linear_2Arg-Spezialfall-est_hohes_Diff}
		Suppose that there exist a normed space $\widetilde{X}$, a map $g \in \FC{\UF}{\widetilde{\SX}}{k}$
		and a continuous bilinear map $b : \widetilde{\SX} \times \SY \to \SZ$
		such that $\Xi = b \circ (g \times \id{\SY})$.
		Then
		\[
			\parFAbl[\ell]{1}{\Xi}(x, y)\eval(h_1, \dotsc, h_\ell)
			= b (\FAbl[\ell]{g}(x)\eval(h_1, \dotsc, h_\ell), y),
		\]
		for $h_1, \dotsc, h_\ell \in \SX$.
		In particular,
		\[
			\label{est:Abb_linear_2Arg-Spezialfall-hohes_Diff--partiell}
			\tag{\ensuremath{\dagger\dagger\dagger}}
			\Opnorm{\parFAbl[\ell]{1}{\Xi}(x, \cdot)}
			\leq \Opnorm{b} \Opnorm{\FAbl[\ell]{g}(x)}
		\]
		and (if $\ell \geq 1$)
		\begin{equation}\label{est:Abb_linear_2Arg-Spezialfall-hohes_Diff}
			\Opnorm{\FAbl[\ell]{\Xi}(x, y)}
			\leq \Opnorm{b} \ell \Opnorm{\FAbl[\ell - 1]{g}(x)}
			+ \Opnorm{b} \norm{y} \,\Opnorm{\FAbl[\ell]{g}(x)}.
		\end{equation}
	\end{assertions}
\end{lem}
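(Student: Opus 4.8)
The plan is to establish the three assertions in turn, using each one to prove the next, and relying throughout on the symmetry of higher Fréchet derivatives together with the chain rule for composition with continuous (multi)linear maps.

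For the first assertion I would argue by induction on $\ell$ that $\parFAbl[\ell]{1}{\Xi}$ is linear in its second argument. Differentiating the identity $\Xi(x, \lambda y + \mu y') = \lambda \Xi(x,y) + \mu \Xi(x, y')$ with respect to the first variable commutes with the scalar coefficients $\lambda, \mu$, so each $x$-derivative inherits linearity in $y$; the inductive step differentiates once more in the first slot. Linearity then yields $\parFAbl[\ell]{1}{\Xi}(\UF \times \sset{0}) = \sset{0}$ at once. For identity $(\dagger)$ I would compute the derivative of the curve $t \mapsto \parFAbl[\ell]{1}{\Xi}(x + t h_1, y + t h_2)$ by splitting it into its $x$- and $y$-contributions: the $x$-contribution is an additional derivative in the first slot, appended as a last argument, hence $\parFAbl[\ell + 1]{1}{\Xi}(x, y) \neg h_1$, while the $y$-contribution equals $\parFAbl[\ell]{1}{\Xi}(x, h_2)$ because a map that is linear in $y$ is its own derivative in that variable.

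The second assertion is the technical core, and I expect the index bookkeeping to be the main obstacle. I would prove the displayed identity by induction on $\ell$. The base case $\ell = 1$ reduces to $\FAbl{\Xi}(x,y) \eval (h_1, h_2) = \parFAbl{1}{\Xi}(x,y)\eval h_1 + \Xi(x, h_2)$, where the second summand uses $\parFAbl[0]{1}{\Xi}(x, \cdot) = \Xi(x, \cdot)$ together with the linearity of $\Xi$ in $y$. For the inductive step I would differentiate the formula for $\ell$ once more in the direction $h^{\ell + 1} = (h_1^{\ell + 1}, h_2^{\ell + 1})$ and apply $(\dagger)$ to each term: the leading term $\parFAbl[\ell]{1}{\Xi}(x,y)\eval(h_1^1, \dotsc, h_1^\ell)$ produces both the new leading term and the $j = \ell + 1$ summand, whereas each existing summand $\parFAbl[\ell - 1]{1}{\Xi}(x, h_2^j)\eval \widehat{h_1^j}$ depends on $x$ alone and therefore merely gains the extra direction $h_1^{\ell + 1}$, turning into the corresponding $j$-th summand for $\ell + 1$. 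The delicate points are to check that the hatted tuples reassemble correctly and that appending the new direction is legitimate, which is precisely where the symmetry of $\FAbl[\ell + 1]{\Xi}$ enters. The estimate $(\dagger\dagger)$ then follows by the triangle inequality from this identity: each of the $\ell$ summands is bounded by $\Opnorm{\parFAbl[\ell - 1]{1}{\Xi}(x, \cdot)}$ and the leading term by $\Opnorm{\parFAbl[\ell]{1}{\Xi}(x, \cdot)} \norm{y}$, using that the component increments on the product space satisfy $\norm{h_i^j} \leq \norm{h^j}$.

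For the third assertion I would write $\Xi(x,y) = b(g(x), y)$ and note that, for fixed $y$, the map $x \mapsto b(g(x), y)$ is the composition of $g$ with the continuous linear map $b(\cdot, y)$. Since composition with a fixed continuous linear map pulls through the derivative, I obtain $\parFAbl[\ell]{1}{\Xi}(x, y)\eval(h_1, \dotsc, h_\ell) = b(\FAbl[\ell]{g}(x)\eval(h_1, \dotsc, h_\ell), y)$. The partial estimate $(\dagger\dagger\dagger)$ is then immediate from the bilinear bound $\norm{b(v, y)} \leq \Opnorm{b}\norm{v}\norm{y}$, and the final estimate of the third assertion results from substituting $(\dagger\dagger\dagger)$ into $(\dagger\dagger)$.
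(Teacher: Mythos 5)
Your proposal is correct and follows essentially the same route as the paper: each assertion is proved by induction on $\ell$, with the identity $(\dagger)$ driving the inductive step of the second assertion and the estimates read off from the resulting identities exactly as you describe. The only cosmetic difference is in the third assertion, where you invoke the fact that a fixed continuous linear map pulls through higher derivatives instead of running the explicit induction the paper uses; both are equivalent.
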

\begin{proof}
	\ref{enum1:Erstes_par_Diff-Abb_linear_2Arg}
	We prove by induction on $\ell$ that $\dA[\ell]{_1\Xi}{}{}$ is linear in its second argument.
	For $\ell = 0$, this is true by our assumption.

	$\ell \to \ell + 1$:
	Since for $h_1, \dotsc, h_{\ell + 1} \in \SX$,
	\[
		\dA[\ell + 1]{_1\Xi}{x, y}{h_1, \dotsc, h_{\ell + 1}}
		= \AblCurve{t}\dA[\ell]{_1\Xi}{x + t h_{\ell + 1}, y }{h_1, \dotsc, h_\ell},
	\]
	and $\dA[\ell]{_1\Xi}{}{}$ is linear in its second argument,
	also $\dA[\ell + 1]{_1\Xi}{}{}$ is so.

	We prove \ref{id:Ableitung_Abb_linear_2Arg}.
	We get using the linearity of $\parFAbl[\ell]{1}{\Xi}$ in the second argument
	\[
		\AblCurve{t} \parFAbl[\ell]{1}{\Xi}(x + t h_1, y + t h_2)
		= \lim_{t \to 0} \parFAbl[\ell]{1}{\Xi}(x + t h_1, h_2) + \AblCurve{t} \parFAbl[\ell]{1}{\Xi}(x + t h_1, y)
	\]
	Since $\lim_{t \to 0} \parFAbl[\ell]{1}{\Xi}(x + t h_1, h_2) = \parFAbl[\ell]{1}{\Xi}(x, h_2)$
	and
	\[
		\AblCurve{t} \parFAbl[\ell]{1}{\Xi}(x + t h_1, y) \eval (v_1, \dotsc, v_\ell)
		= \parFAbl[\ell + 1]{1}{\Xi}(x, y)(v_1, \dotsc, v_\ell, h _1),
	\]
	for $v_1, \dotsc, v_\ell \in \SX$, the desired identity follows.

	\ref{enum1:Formel-est_hohe_Diff-Abb_linear_2Arg}
	We prove the identity for $\FAbl[\ell]{\Xi}$ by induction on $\ell$.

	$\ell = 1$:
	This follows directly from \ref{id:Ableitung_Abb_linear_2Arg}.

	$\ell \to \ell + 1$:
	We calculate the $(\ell + 1)$-th derivative of $\Xi$ using the inductive hypothesis
	and \ref{id:Ableitung_Abb_linear_2Arg}:
	\begin{align*}
		&\FAbl[\ell + 1]{\Xi}(x, y)\eval(h^1, \dotsc, h^{\ell + 1})
		\\
		=& \AblCurve{t} \FAbl[\ell]{\Xi}(x + t h^{\ell + 1}_1, y + t h^{\ell + 1}_2)\eval(h^1, \dotsc, h^\ell)
		\\
		=& \AblCurve{t} \parFAbl[\ell]{1}{\Xi}(x  + t h^{\ell + 1}_1, y + t h^{\ell + 1}_2)\eval(h^1_1,\dotsc, h^\ell_1)
					+ \sum_{j = 1}^\ell \AblCurve{t} \parFAbl[\ell - 1]{1}{\Xi}(x + t h^{\ell + 1}_1, h^j_2)\eval \widehat{h_1^j}
		\\
		=& \parFAbl[\ell]{1}{\Xi}(x,  h^{\ell + 1}_2)\eval(h^1_1,\dotsc, h^\ell_1)
			+ \parFAbl[\ell + 1]{1}{\Xi}(x, y)\eval(h^1_1,\dotsc, h^\ell_1, h^{\ell + 1}_1)
			+ \sum_{j = 1}^\ell  \parFAbl[\ell]{1}{\Xi}(x, h^j_2)\eval \widehat{h_1^j},
	\end{align*}
	from which we derive the assertion.

	The estimate \ref{est:norm_l-te_Ableitung-Abb_linear_2Arg} follows directly from this identity.

	\ref{enum1:Abb_linear_2Arg-Spezialfall-est_hohes_Diff}
	We first prove the identity by induction on $\ell$. The assertion obviously holds for $\ell = 0$.

	$\ell \to \ell + 1$:
	We use the inductive hypothesis to calculate
	\begin{multline*}
		\parFAbl[\ell + 1]{1}{\Xi}(x, y)\eval(h_1, \dotsc, h_{\ell + 1})
		= \AblCurve{t} \parFAbl[\ell]{1}{\Xi}(x + t h_{\ell + 1}, y)\eval(h_1, \dotsc, h_{\ell})
		\\
		= \AblCurve{t} b(\FAbl[\ell]{g}(x + t h_{\ell + 1})\eval(h_1, \dotsc, h_{\ell}), y )
		= b(\FAbl[\ell + 1]{g}(x)\eval(h_1, \dotsc, h_{\ell + 1}), y ),
	\end{multline*}
	so the assertion is established.

	The estimate \ref{est:Abb_linear_2Arg-Spezialfall-hohes_Diff--partiell} follows directly from this identity.
	Furthermore, we derive \ref{est:Abb_linear_2Arg-Spezialfall-hohes_Diff} from \ref{est:norm_l-te_Ableitung-Abb_linear_2Arg}
	and \ref{est:Abb_linear_2Arg-Spezialfall-hohes_Diff--partiell}.
\end{proof}

\begin{lem}\label{lem:Abschaetzung_hoheDiffs_Spezialfall-linArg}
	Let $E$, $F$, $\SX$, $\SY$ and $\SZ$ be normed spaces,
	$\UF \subseteq \SX$ and $\VF \subseteq \SY$ open nonempty sets,
	$b : \Lin{\SY}{\SZ} \times E \to F$ continuous bilinear with $\Opnorm{b} \leq 1$
	and $\Xi \in \FC{\UF \times \VF}{\SZ}{\infty}$.
		We define
	\[
		\Xi^{(2)}_b : \UF \times \VF \times E \to F
		: (x, y, e) \mapsto b(\parFAbl{2}{\Xi}(x, y), e).
	\]
	Then $\Xi^{(2)}_b(\UF \times \VF \times \sset{0}) = \sset{0}$,
	and for each $\ell \in \N^*$, we have
	\[
		\Opnorm{\FAbl[\ell]{\Xi^{(2)}_b}(x, y, e)}
			\leq \ell \Opnorm{\FAbl[\ell]{\Xi}(x, y)}
			+ \norm{e} \,\Opnorm{\FAbl[\ell + 1]{\Xi}(x, y)}.
	\]
	Moreover, for each $R > 0$,
	\begin{equation}\label{est:Differential-MaMu_hohes_Diff_1-l-Norm}
		\hn{\Xi^{(2)}_b}{1_{\UF \times \VF \times \Ball[E]{0}{R} }}{\ell}
				\leq \ell \hn{\Xi}{1_{\UF \times \VF}}{\ell}
					+ R \hn{\Xi}{1_{\UF \times \VF}}{\ell + 1}.
	\end{equation}
\end{lem}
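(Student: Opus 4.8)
The plan is to recognize $\Xi^{(2)}_b$ as an instance of the situation treated in the preceding lemma, namely a map on a product that is \emph{linear in its last argument}, and then to invoke the special-case estimate \ref{est:Abb_linear_2Arg-Spezialfall-hohes_Diff}. Concretely, setting $g \ndef \parFAbl{2}{\Xi} \in \FC{\UF \times \VF}{\Lin{\SY}{\SZ}}{\infty}$, we have $\Xi^{(2)}_b = b \circ (g \times \id{E})$, so that, reading $\UF \times \VF$ as the first variable and $E$ as the second, $\Xi^{(2)}_b$ satisfies the hypotheses of part \ref{enum1:Abb_linear_2Arg-Spezialfall-est_hohes_Diff} of the preceding lemma with $\widetilde{\SX} = \Lin{\SY}{\SZ}$. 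Since $b$ is bilinear we get $\Xi^{(2)}_b(x,y,0) = b(g(x,y), 0) = 0$, which is the first assertion (it is also the vanishing recorded in \ref{enum1:Erstes_par_Diff-Abb_linear_2Arg}). Applying \ref{est:Abb_linear_2Arg-Spezialfall-hohes_Diff} and using $\Opnorm{b} \leq 1$ then yields, for each $\ell \geq 1$,
\[
    \Opnorm{\FAbl[\ell]{\Xi^{(2)}_b}(x, y, e)}
    \leq \ell\, \Opnorm{\FAbl[\ell - 1]{g}(x, y)}
    + \norm{e}\, \Opnorm{\FAbl[\ell]{g}(x, y)}.
\]

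The key remaining point, and the step I expect to require the most care, is to replace the derivatives of the partial derivative $g = \parFAbl{2}{\Xi}$ by derivatives of $\Xi$ itself, that is, to prove $\Opnorm{\FAbl[m]{g}(x,y)} \leq \Opnorm{\FAbl[m+1]{\Xi}(x,y)}$ for every $m$. I would do this by writing $g$ as the post-composition of the full derivative $\FAbl{\Xi}$ with the restriction map $\Phi : \Lin{\SX \times \SY}{\SZ} \to \Lin{\SY}{\SZ}$, $L \mapsto L \circ \iota$, where $\iota : \SY \to \SX \times \SY$, $v \mapsto (0, v)$; indeed $g(x,y) = \FAbl{\Xi}(x,y) \circ \iota$. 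As $\Phi$ is continuous and linear with $\Opnorm{\Phi} \leq \Opnorm{\iota} \leq 1$, repeated use of the chain rule gives $\FAbl[m]{g}(x,y) = \Phi \circ \FAbl[m]{(\FAbl{\Xi})}(x,y)$, whence $\Opnorm{\FAbl[m]{g}(x,y)} \leq \Opnorm{\FAbl[m+1]{\Xi}(x,y)}$ after the canonical (isometric) identification of $\FAbl[m]{(\FAbl{\Xi})}$ with $\FAbl[m+1]{\Xi}$.

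Substituting the two bounds $\Opnorm{\FAbl[\ell-1]{g}(x,y)} \leq \Opnorm{\FAbl[\ell]{\Xi}(x,y)}$ and $\Opnorm{\FAbl[\ell]{g}(x,y)} \leq \Opnorm{\FAbl[\ell+1]{\Xi}(x,y)}$ into the displayed inequality produces exactly the claimed pointwise estimate. Finally, for \ref{est:Differential-MaMu_hohes_Diff_1-l-Norm} I would take the supremum of this pointwise estimate over $(x,y,e) \in \UF \times \VF \times \Ball[E]{0}{R}$: on this set $\norm{e} < R$, and the two resulting suprema decouple over $(x,y) \in \UF \times \VF$, so the right-hand side is bounded above by $\ell\, \hn{\Xi}{1_{\UF \times \VF}}{\ell} + R\, \hn{\Xi}{1_{\UF \times \VF}}{\ell+1}$, as desired.
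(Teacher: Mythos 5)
Your proposal is correct and follows essentially the same route as the paper: both recognize $\Xi^{(2)}_b = b \circ (\parFAbl{2}{\Xi} \times \id{E})$, apply the special-case estimate \ref{est:Abb_linear_2Arg-Spezialfall-hohes_Diff} with $\Opnorm{b}\leq 1$, bound $\Opnorm{\FAbl[m]{(\parFAbl{2}{\Xi})}(x,y)}$ by $\Opnorm{\FAbl[m+1]{\Xi}(x,y)}$, and take suprema over $\UF \times \VF \times \Ball[E]{0}{R}$. The only difference is that you spell out the justification of the intermediate inequality (via post-composition with the restriction operator $L \mapsto L\circ\iota$), which the paper merely asserts.
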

\begin{proof}
	We get from \ref{est:Abb_linear_2Arg-Spezialfall-hohes_Diff} that
	\[
		\Opnorm{\FAbl[\ell]{\Xi^{(2)}_b}(x, y, e)}
		\leq \ell \Opnorm{\FAbl[\ell - 1]{(\parFAbl{2}{\Xi})}(x, y)}
			+ \norm{e} \,\Opnorm{\FAbl[\ell]{(\parFAbl{2}{\Xi})}(x, y)}.
	\]
	Since
	\[
		\Opnorm{\FAbl[\ell]{(\parFAbl{2}{\Xi})}(x, y)}
		\leq \Opnorm{\FAbl[\ell]{(\FAbl{\Xi})}(x, y)}
		= \Opnorm{\FAbl[\ell + 1]{\Xi}(x, y)}
	\]
	for all $\ell \in \N^*$, we obtain the first estimate.
	\ref{est:Differential-MaMu_hohes_Diff_1-l-Norm} follows.
\end{proof}
\subsection{The superposition operator}
We prove the above assertion about the superposition,
using notation from \refer{lem:Abschaetzung_hoheDiffs_Spezialfall-linArg}.
The hardest part of the proof will be the examination of the superposition with $\Xi^{(2)}_M$.
\begin{prop}\label{prop:SuperpostionCWZweiVars-id}
	Let $\SX$, $\SY$ and $\SZ$ be normed spaces, $\UF \subseteq \SX$ an open nonempty subset,
	$\VF \subseteq \SY$ an open neighborhood of $0$ that is star-shaped with center $0$,
	$\GewFunk \subseteq \cl{\R}^\UF$ with $1_\UF \in \GewFunk$ and $k \in \cl{\N}$.
	Further, let $\Xi \in \FC{\UF \times \VF}{\SZ}{\infty}$
	such that $\Xi(\UF \times \sset{0}) = \sset{0}$.
	\begin{assertions}
		\item\label{ass1:est_FWerte_SP2V}
		For maps $\gamma, \eta : \UF \to \VF$ such that the line segment
		$\set{t \gamma + (1-t) \eta}{t \in [0,1]} \sub \VF^\UF$ and $f \in \GewFunk$,
		the estimate
		\begin{equation}\label{est:f0-Norm_SPid-Differenz}
			\hn{\Xi \circ (\id{\UF}, \gamma) - \Xi \circ (\id{\UF}, \eta)}{f}{0}
			\leq
			\hn{\parFAbl{2}{\Xi} }{1_{\UF \times \VF}}{0}
			\hn{\gamma - \eta}{f}{0}
		\end{equation}
		holds. In particular, for $\eta = 0$ we get
		\begin{equation}\label{est:f0-Norm_SPid}
			\hn{\Xi \circ (\id{\UF}, \gamma)}{f}{0}
			\leq
			\hn{\parFAbl{2}{\Xi} }{1_{\UF \times \VF}}{0}
			\hn{\gamma}{f}{0}.
		\end{equation}

		\item\label{ass1:Differential_SP2V+est}
		Let $\gamma \in \FC{\UF}{\VF}{1}$. Then
		\[
						\FAbl{(\Xi\circ (\id{\UF}, \gamma))}
			= \parFAbl{1}{\Xi}\circ (\id{\UF}, \gamma) + \parFAbl{2}{\Xi}\circ (\id{\UF}, \gamma) \MaMu \FAbl{\gamma} .
		\]
		The map $\parFAbl{1}{\Xi}$ maps $\UF \times \sset{0}$ to $0$,
		and for $f \in \GewFunk$, we have
		\begin{equation}\label{est:f1-Norm_SPid}
			\hn{\Xi \circ (\id{\UF}, \gamma)}{f}{1}
			\leq
			\hn{\Xi }{1_{\UF \times \VF}}{2}
			\hn{\gamma}{f}{0}
			+
			\hn{\parFAbl{2}{\Xi} }{1_{\UF \times \VF} }{0}
			\hn{\gamma}{f}{1}.
		\end{equation}

		\item\label{ass1:SP2V-2conds_def_glatt}
		Suppose that
		\begin{equation}\label{cond:est_weights_SP}
			(\forall f \in \GewFunk, \ell \in \N^*)
			(\exists g \in \ExtWeights{\GewFunk})
			\, \hn{\Xi}{1_{\UF \times \VF}}{\ell} \abs{f} \leq \abs{g}.
		\end{equation}
		Then the map
		\[
			\Xi_{\ast}:
			\CcFo{\UF}{\VF}{k} \to \CcF{\UF}{\SZ}{k}
			: \gamma \mapsto \Xi \circ (\id{\UF}, \gamma)
		\]
		is defined and smooth with
		\begin{equation}
			\label{id:Differential_SuperposCWZweiVars-id}
			\dA{\Xi_{\ast}}{\gamma}{\gamma_{1}}
			= (\dA{_{2}\Xi}{}{})_{\ast}(\gamma, \gamma_{1}).
		\end{equation}
	\end{assertions}
\end{prop}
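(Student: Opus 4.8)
The plan is to treat assertions~\ref{ass1:est_FWerte_SP2V} and~\ref{ass1:Differential_SP2V+est} by one-variable calculus in the second slot and to concentrate the real work on~\ref{ass1:SP2V-2conds_def_glatt}. For~\ref{ass1:est_FWerte_SP2V} I would use that $\VF$ is star-shaped about $0$: the segment hypothesis keeps $s \mapsto \Xi(x, \eta(x) + s(\gamma(x)-\eta(x)))$ inside $\VF$, so the fundamental theorem of calculus gives
\[
	\Xi(x,\gamma(x)) - \Xi(x,\eta(x)) = \Mint{\parFAbl{2}{\Xi}(x, \eta(x) + s(\gamma(x)-\eta(x)))\MaMu(\gamma(x)-\eta(x))}{s};
\]
taking operator norms, multiplying by $\abs{f(x)}$ and passing to the supremum yields~\ref{est:f0-Norm_SPid-Differenz}, and $\eta = 0$ (with $\Xi(x,0)=0$) gives~\ref{est:f0-Norm_SPid}. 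For~\ref{ass1:Differential_SP2V+est} the derivative formula is the chain rule for $x \mapsto (x,\gamma(x))$ combined with $\FAbl{\Xi}(x,y)\MaMu(h_1,h_2) = \parFAbl{1}{\Xi}(x,y)\MaMu h_1 + \parFAbl{2}{\Xi}(x,y)\MaMu h_2$; differentiating the constant map $x \mapsto \Xi(x,0) = 0$ shows $\parFAbl{1}{\Xi}$ annihilates $\UF \times \sset{0}$. For~\ref{est:f1-Norm_SPid} I would bound $\Opnorm{\parFAbl{1}{\Xi}(x,\gamma(x))} \leq \bigl(\sup_{y \in \VF}\Opnorm{\FAbl[2]{\Xi}(x,y)}\bigr)\norm{\gamma(x)}$ (again by the mean value inequality in the second slot, using $\parFAbl{1}{\Xi}(x,0)=0$), estimate the second summand directly, and take suprema.

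The substance is~\ref{ass1:SP2V-2conds_def_glatt}. I would first prove that $\Xi_\ast$ is well defined and continuous by induction on $k \in \N$ (the case $k = \infty$ following since $\CcF{\UF}{\SZ}{\infty}$ is the projective limit of the $\CcF{\UF}{\SZ}{k}$, and $\CcFo{\UF}{\VF}{k}$ is open in $\CcF{\UF}{\SY}{k}$, so directions range over $\CcF{\UF}{\SY}{k}$). For $k = 0$ the pointwise form of the estimate from~\ref{ass1:est_FWerte_SP2V} reads $\abs{f(x)}\,\norm{\Xi(x,\gamma(x))} \leq \hn{\Xi}{1_{\UF \times \VF}}{1}\,\abs{f(x)}\,\norm{\gamma(x)} \leq \abs{g(x)}\,\norm{\gamma(x)}$, where $g \in \ExtWeights{\GewFunk}$ is furnished by condition~\ref{cond:est_weights_SP} at order $1$; the supremum is $\hn{\gamma}{g}{0} < \infty$, and continuity comes from~\ref{est:f0-Norm_SPid-Differenz}. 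For the step $k \to k+1$ I would invoke the Reduction Lemma~\ref{lem:topologische_Zerlegung_von_CFk}: it suffices to land $\gamma \mapsto \FAbl{(\Xi \circ (\id{\UF}, \gamma))}$ continuously in $\CcF{\UF}{\Lin{\SX}{\SZ}}{k}$, and by~\ref{ass1:Differential_SP2V+est} this derivative is $\parFAbl{1}{\Xi} \circ (\id{\UF}, \gamma) + (\parFAbl{2}{\Xi} \circ (\id{\UF}, \gamma))\MaMu\FAbl{\gamma}$. The first summand is the superposition operator of $\parFAbl{1}{\Xi}$, which is itself of the present type --- smooth, annihilating $\UF \times \sset{0}$, and satisfying~\ref{cond:est_weights_SP} one order higher because $\hn{\parFAbl{1}{\Xi}}{1_{\UF \times \VF}}{\ell} \leq \hn{\Xi}{1_{\UF \times \VF}}{\ell+1}$ --- so the inductive hypothesis disposes of it.

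The second summand is the anticipated main obstacle, precisely the superposition the preamble flags. I would read it as the superposition of $\Xi^{(2)}_b$ evaluated at $(\gamma, \FAbl{\gamma})$, where $b \colon \Lin{\SY}{\SZ} \times \Lin{\SX}{\SY} \to \Lin{\SX}{\SZ}$ is operator composition (so $\Opnorm{b} \leq 1$) and $\Xi^{(2)}_b(x,y,e) = b(\parFAbl{2}{\Xi}(x,y), e)$ as in Lemma~\ref{lem:Abschaetzung_hoheDiffs_Spezialfall-linArg}. That lemma supplies the pointwise bound $\Opnorm{\FAbl[\ell]{\Xi^{(2)}_b}(x,y,e)} \leq \ell\,\Opnorm{\FAbl[\ell]{\Xi}(x,y)} + \norm{e}\,\Opnorm{\FAbl[\ell+1]{\Xi}(x,y)}$, whose decisive feature is that the linear growth in the operator slot $e$ costs just one extra differentiation order of $\Xi$. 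Since here $e = \FAbl{\gamma}(x)$ has no global bound, the point is that $\norm{e}$ is kept pointwise as $\Opnorm{\FAbl{\gamma}(x)}$: applying~\ref{cond:est_weights_SP} at orders $\ell$ and $\ell + 1$ turns the factors $\hn{\Xi}{1_{\UF \times \VF}}{\ell}\abs{f}$ and $\hn{\Xi}{1_{\UF \times \VF}}{\ell+1}\abs{f}$ into weights $\abs{g}$ with $g \in \ExtWeights{\GewFunk}$, while the remaining $\Opnorm{\FAbl{\gamma}(x)}$ is absorbed into the finite weighted norms $\hn{\FAbl{\gamma}}{g}{j}$, $j \leq k$, available because $\gamma \in \CcFo{\UF}{\VF}{k+1}$. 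This is exactly where the freedom to pick $g$ in the maximal extension $\ExtWeights{\GewFunk}$ (allowing $\cl{\R}$-values) is indispensable, and it closes the induction for well-definedness and continuity.

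Finally I would upgrade to smoothness. For the $\ConDiff{}{}{1}$-property I would fix $\gamma \in \CcFo{\UF}{\VF}{k}$ and $\gamma_1 \in \CcF{\UF}{\SY}{k}$ and, for $t$ small enough that the segment stays in $\VF$, write the difference quotient by Taylor's formula in the second slot as $\Mint{\parFAbl{2}{\Xi} \circ (\id{\UF}, \gamma + s t \gamma_1) \MaMu \gamma_1}{s}$; verifying (with the product-term analysis of the previous paragraph) that the integrand depends continuously on $(s,t)$ with values in $\CcF{\UF}{\SZ}{k}$, Lemma~\ref{lem:Stetigkeit_parameterab_Int} identifies the limit as $\dA{\Xi_\ast}{\gamma}{\gamma_1} = (\dA{_{2}\Xi}{}{})_\ast(\gamma,\gamma_1)$, giving~\ref{id:Differential_SuperposCWZweiVars-id}, and also yields continuity of $\dA{\Xi_\ast}{}{}$ on $\CcFo{\UF}{\VF}{k} \times \CcF{\UF}{\SY}{k}$. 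This derivative is linear in the direction $\gamma_1$ and, in $\gamma$, is again a superposition of the $\Xi^{(2)}_b$-type (now with $b$ the evaluation $\Lin{\SY}{\SZ} \times \SY \to \SZ$), so it is subject to the very same estimates. Differentiating once more produces, via the Taylor argument in the $\gamma$-slot and linearity in the inserted directions, finite sums of superpositions built from higher partials $\parFAbl[m]{2}{\Xi}$ contracted against the inserted weighted functions; each is controlled by Lemma~\ref{lem:Abschaetzung_hoheDiffs_Spezialfall-linArg} exactly as above, so an induction on the differentiability order shows $\Xi_\ast \in \ConDiff{}{}{\ell}$ for every $\ell$ and hence smooth. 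The recurring difficulty at every stage is the same linear-in-direction product term, and Lemma~\ref{lem:Abschaetzung_hoheDiffs_Spezialfall-linArg} is the device that tames it.
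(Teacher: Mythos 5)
Your overall architecture coincides with the paper's: the integral representation in the second slot for \ref{ass1:est_FWerte_SP2V}, the chain rule together with the vanishing of $\parFAbl{1}{\Xi}$ on $\UF\times\sset{0}$ for \ref{ass1:Differential_SP2V+est}, an induction on $k$ via \ref{lem:topologische_Zerlegung_von_CFk} for well-definedness and continuity, and the difference-quotient/weak-integral argument with \ref{lem:Stetigkeit_parameterab_Int} followed by an induction on the differentiability order for smoothness. Parts \ref{ass1:est_FWerte_SP2V} and \ref{ass1:Differential_SP2V+est} and the final smoothness upgrade are in order.

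The gap sits exactly at the step you yourself flag as the main obstacle, namely the term $\parFAbl{2}{\Xi}\circ(\id{\UF},\gamma)\MaMu\FAbl{\gamma}=\Xi^{(2)}_M\circ(\id{\UF},\gamma,\FAbl{\gamma})$. Your premise that $e=\FAbl{\gamma}(x)$ ``has no global bound'' is false in this setting: since $1_\UF\in\GewFunk$ and $\gamma\in\CcFo{\UF}{\VF}{k+1}$, one has $\sup_{x\in\UF}\Opnorm{\FAbl{\gamma}(x)}=\hn{\gamma}{1_\UF}{1}<\infty$, and this uniform bound is precisely what closes the induction. The paper restricts $\Xi^{(2)}_M$ to $\UF\times\VF\times\Ball[\Lin{\SX}{\SY}]{0}{R}$, where estimate \ref{est:Differential-MaMu_hohes_Diff_1-l-Norm} shows that condition \ref{cond:est_weights_SP} is inherited (with the new dominating weight a combination $\ell\abs{g_\ell}+R\abs{g_{\ell+1}}$ of elements of $\ExtWeights{\GewFunk}$), and then applies the inductive hypothesis to the superposition operator of $\Xi^{(2)}_M$ on the neighborhood $\set{\eta}{\hn{\eta}{1_\UF}{1}<\hn{\gamma}{1_\UF}{1}+1}$ of $\gamma$, on which all $\FAbl{\eta}$ take values in one common ball. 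Your alternative --- keeping $\norm{e}$ pointwise and ``absorbing'' it into $\hn{\FAbl{\gamma}}{g}{j}$ --- only yields the order-zero estimate: it neither verifies \ref{cond:est_weights_SP} for $\Xi^{(2)}_M$ (which genuinely fails on the unbounded domain $\UF\times\VF\times\Lin{\SX}{\SY}$ because of the linear growth in $e$), so the inductive hypothesis cannot be invoked for this factor, nor does it supply the direct control of the higher-order weighted norms $\hn{\cdot}{f}{j}$, $1\le j\le k$, of the composite that would be needed in its place. The same restriction-to-a-ball device is needed once more for $\dA{_{2}\Xi}{}{}$ (which is $\Xi^{(2)}_b$ for $b$ the evaluation map) in the smoothness induction, where the paper additionally uses $\CcFo{\UF}{\VF\times\SY}{k}=\bigcup_{R>0}\CcFo{\UF}{\VF\times\Ball{0}{R}}{k}$.
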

\begin{proof}
	\ref{ass1:est_FWerte_SP2V}
	For each $x \in \UF$, we calculate
	\begin{equation*}
		\Xi(x, \gamma(x)) - \Xi(x, \eta(x))
		= \Mint{ \dA{_{2}\Xi}{x, t \gamma(x) + (1-t) \eta(x)}{\gamma(x) - \eta(x)} }{t}.
	\end{equation*}
	Hence for each $f\in\GewFunk$, we have
	\[
		\abs{f(x)}\, \norm{\Xi(x, \gamma(x)) - \Xi(x, \eta(x))}
		\leq \hn{\parFAbl{2}{\Xi} }{1_{\UF \times \VF} }{0}
		\abs{f(x)}\, \norm{\gamma(x) - \eta(x)}.
	\]
	From this estimate, we conclude that \ref{est:f0-Norm_SPid-Differenz} holds.

	\ref{ass1:Differential_SP2V+est}
	The identity for $\FAbl{(\Xi \circ (\id{\UF}, \gamma))}$
	follows from the Chain Rule.
	For $x \in \UF$ and $h\in\SX$, we have
	\[
		\parFAbl{1}{\Xi}(x, 0)\eval h = \dA{_{1}\Xi}{x, 0}{h}
		= \lim_{t\to 0}\frac{\Xi(x + t h, 0) - \Xi(x, 0) }{t} = 0,
	\]
	whence $\parFAbl{1}{\Xi}(x, 0) = 0$.
	We then get the estimate by applying \ref{est:f0-Norm_SPid} to the first summand.

	\ref{ass1:SP2V-2conds_def_glatt}
	We first prove by induction on $k$ that $\Xi_*$ is defined and continuous.

	$k=0$:
	We see with \ref{est:f0-Norm_SPid} that $\Xi_*$ is defined since
	\[
		\hn{\Xi \circ (\id{\UF}, \gamma)}{f}{0}
		\leq \hn{\Xi}{1_{\UF \times \VF}}{1} \hn{\gamma}{f}{0}
		\leq \hn{\gamma}{g}{0}.
	\]
	With a similar argument, we see using \ref{est:f0-Norm_SPid-Differenz}
	that $\Xi_*$ continuous
	since each $\gamma \in \CcFo{\UF}{\VF}{0}$
	has a convex neighborhood in $\CcFo{\UF}{\VF}{0}$.

	$k \to k +1$:
	We use \refer{lem:topologische_Zerlegung_von_CFk}.
	So all that remains to show is that
	$\FAbl{(\Xi \circ (\id{\UF}, \gamma))} \in \CcF{\UF}{\Lin{\SX}{\SZ}}{k}$
	and
	$
		\gamma \mapsto \FAbl{(\Xi \circ (\id{\UF}, \gamma))}
	$
	is continuous. We proved in \ref{ass1:Differential_SP2V+est} that
	\[
		\FAbl{(\Xi \circ (\id{\UF}, \gamma))}
		= \parFAbl{1}{\Xi}\circ (\id{\UF}, \gamma)
		+ \Xi^{(2)}_M \circ (\id{\UF}, \gamma, \FAbl{\gamma}),
	\]
	see \refer{lem:Abschaetzung_hoheDiffs_Spezialfall-linArg}
	for the definition of $\Xi^{(2)}_M$
	(here, $M$ denotes the composition of linear operators).
	We also proved in \ref{ass1:Differential_SP2V+est} that
	$\parFAbl{1}{\Xi}(\UF \times\sset{0}) = \sset{0}$,
	and obviously
	$\hn{\parFAbl{1}{\Xi}}{1_{\UF \times \VF}}{\ell}
	\leq \hn{\Xi}{1_{\UF \times \VF}}{\ell + 1}$ for all $\ell \in \N$.
	Hence we can use the inductive hypothesis to see that
	\[
		\CcFo{\UF}{\VF}{k + 1} \to \CcF{\UF}{\Lin{\SX}{\SZ}}{k}
		: \gamma \mapsto \parFAbl{1}{\Xi}\circ (\id{\UF}, \gamma)
	\]
	is defined and continuous.
	We examine $\Xi^{(2)}_M$. To this end, let $R > 0$.
	We see using \ref{est:Differential-MaMu_hohes_Diff_1-l-Norm} that for $\ell \in \N^*$ and $f \in \GewFunk$,
	\[
		\hn{\Xi^{(2)}_M}{1_{\UF \times \VF \times \Ball[\Lin{X}{Y}]{0}{R} }}{\ell} \abs{f}
			\leq \ell \hn{\Xi}{1_{\UF \times \VF}}{\ell} \abs{f}
							+ R \hn{\Xi}{1_{\UF \times \VF}}{\ell + 1} \abs{f}
			\leq \ell \abs{g_\ell} + R \abs{g_{\ell + 1}}.
	\]
	Here, $g_\ell, g_{\ell + 1} \in \ExtWeights{\GewFunk}$ exist by our assumptions.
	Hence in both cases, we can apply the inductive hypothesis to $\Xi^{(2)}_M$ and get (using
	\refer{lem:gewichtete_Abb_Produktisomorphie-endl} implicitly)
	that the map
	\[
		\CcFo{\UF}{\VF}{k}
		\times \CcFo{\UF}{\Ball[\Lin{\SX}{\SY}]{0}{R} }{k}
		\to
		\CcF{\UF}{\Lin{\SX}{\SZ} }{k}
		: (\gamma, \Gamma) \mapsto \Xi^{(2)}_M \circ (\id{\UF}, \gamma, \Gamma)
	\]
	is defined and continuous.
	Hence for each $\gamma \in \CcFo{\UF}{\VF}{k + 1}$,
	the map
	\[
		\set{ \eta \in  \CcFo{\UF}{\VF}{k + 1}}{ \hn{ \eta }{1_{\UF}}{1} <  \hn{ \gamma }{1_{\UF}}{1} + 1}
		\to \CcF{\UF}{\Lin{\SX}{\SZ}}{k}
		: \eta \mapsto \Xi^{(2)}_M \circ (\id{\UF}, \eta, \FAbl{\eta})
	\]
	is defined and continuous.
	Since $1_\UF \in \GewFunk$, the domain of this map is a neighborhood of $\gamma$.
	This finishes the proof.

	We pass on to prove the smoothness of $\Xi_{\ast}$.
	To do this, we have to examine $\dA{_{2}\Xi}{}{}$.
	Obviously $\dA{_{2}\Xi}{}{} = \Xi^{(2)}_\eval$,
	where $\eval$ denotes the evaluation of linear operators.
	Hence we can use a similar argument as above when discussing $\Xi^{(2)}_M$
	to see that
	\[
		(\dA{_{2}\Xi}{}{})_{\ast}
		: \CcFo{\UF}{\VF}{k} \times \CcF{\UF}{\SY}{k} \to \CcF{\UF}{\SZ}{k}
		: (\gamma, \gamma_1) \mapsto \dA{_{2}\Xi}{}{} \circ (\id{\UF}, \gamma, \gamma_1)
	\]
	is defined and continuous.
	Now let $\gamma \in \CcFo{\UF}{\VF}{k}$ and $\gamma_{1} \in \CcF{\UF}{\SY}{k}$.
	Since $\CcFo{\UF}{\VF}{k}$ is open,
	there exists an $r > 0$ such that $\set{\gamma + s \gamma_{1} }{ s \in \Ball[\K]{0}{r}} \subseteq \CcFo{\UF}{\VF}{k}$.
	We calculate for $x \in \UF$ and $t \in \Ball[\K]{0}{r}\setminus\sset{0}$ (using \refer{lem:gewichtete_Abb_Produktisomorphie-endl} implicitly) that
	\begin{align*}
		\frac{\Xi_\ast(\gamma + t\gamma_{1})(x) - \Xi_\ast(\gamma)(x) }{t}
		&= \frac{\Xi(x, \gamma(x) + t\gamma_{1}(x)) - \Xi(x, \gamma(x))}{t}
		\\
		&= \Mint{ \dA{_{2}\Xi}{x, \gamma(x) + s t \gamma_{1}(x)}{\gamma_{1}(x)} }{s}
		\\
		&= \Mint{( \dA{_{2}\Xi}{}{})_{\ast}(\gamma + s t \gamma_1, \gamma_1)(x) }{s}.
	\end{align*}
	Hence we can apply \cite[La.~3.2.13]{MR2952176}
	to see that
	\[
		\frac{\Xi_\ast(\gamma + t\gamma_{1}) - \Xi_\ast(\gamma) }{t}
		= \Mint{( \dA{_{2}\Xi}{}{})_{\ast}(\gamma + s t \gamma_1, \gamma_1) }{s}.
	\]
	Using \refer{lem:Stetigkeit_parameterab_Int}, we derive
	that $\Xi_{\ast}$ is $\ConDiff{}{}{1}$ and \ref{id:Differential_SuperposCWZweiVars-id} holds.

	We see with \ref{est:Differential-MaMu_hohes_Diff_1-l-Norm}
	(again, using that $\dA{_{2}\Xi}{}{} = \Xi^{(2)}_\eval$)
	that \ref{cond:est_weights_SP} holds for $\dA{_{2}\Xi}{}{}$
	on $\UF \times \VF \times \Ball{0}{R}$ for each $R > 0$.
	Since $1_{\UF} \in \GewFunk$, we have that
	$\CcFo{\UF}{\VF \times \SY}{k} = \bigcup_{R > 0} \CcFo{\UF}{\VF \times \Ball{0}{R}}{k}$.
	So with an easy induction argument we conclude
	(using \refer{lem:gewichtete_Abb_Produktisomorphie-endl})
	from \ref{id:Differential_SuperposCWZweiVars-id}
	that $\Xi_{\ast}$ is $\ConDiff{}{}{\ell}$ for each $\ell \in \N$ and hence smooth.
\end{proof}

\section{Weighted restricted products}
\label{sec:weighted_restricted_products}
We are ready to discuss restricted products of weighted function spaces.
As suggested in the introduction, for the sake of clarity we first take a more general approach.
\subsection{Restricted products for locally convex spaces with uniformly parameterized seminorms}
\begin{defi}[Restricted products]
	Let $I$ and $J$ be nonempty sets, $\famiI{E_i}$ be a family of locally convex spaces
	such that for each $i \in I$, there exists
	a family $\fami{p_{i, j}}{j}{J}$ of seminorms on $E_i$ that defines its topology.
	For each $j \in J$, we define the quasinorm
	\[
		p_j : \prod_{i \in I} E_i \to [0, \infty]
		: \famiI{x_i} \mapsto \sup_{i \in I} p_{i, j}(x_i).
	\]
	With these, we define
	\[
		\Linf{\famiI{E_i}} \ndef
		\set{x \in \prod_{i \in I} E_i }{(\forall j \in J)\,p_j(x) < \infty}.
	\]
	We shall use the same symbol, $p_j$, for the restriction of $p_j$ to $\Linf{\famiI{E_i}}$.
	Endowed with the seminorms $\set{p_j}{j \in J}$, the latter is a locally convex space.
	Note that the topology on $\Linf{\famiI{E_i}}$ is finer than the ordinary product topology,
	and strictly finer if $\set{i \in I}{E_i \neq \sset{0}}$ is infinite.
\end{defi}
\subsubsection*{On Lipschitz continuous functions to a restricted product}
Since the topology of $\Linf{\famiI{E_i}}$ generally is finer than the product topology,
a map whose component maps are continuous is not necessarily continuous.
But we can give a sufficient criterion for Lipschitz continuity.
First, we give the following definition.
\begin{defi}
	Let $X, Y$ be locally convex spaces, $U \sub X$ open, $\phi : U \to Y$ and $p \in \normsOn{Y}$, $q \in \normsOn{X}$.
	Then we set
	\[
		\Lip{q}{p}{\phi}
		\ndef \inf \set{ L \in [0, \infty]}{
			(\forall x, y \in U)\,  \norm{\phi(x) - \phi(y)}_p \leq L \norm{x - y}_q }.
	\]
	If $\Lip{q}{p}{\phi} < \infty$,
	then $ \norm{\phi(x) - \phi(y)}_p \leq \Lip{q}{p}{\phi} \norm{x - y}_q$ for all $x, y \in U$.
\end{defi}
\begin{lem}\label{lem:L-Stetigkeit_Abb_in_LinfProd}
	Let $V$ be a nonempty subset of the locally convex space $X$.
	Let $A : V \to \Linf{\famiI{E_i}}$ be a map such that
	\[
		(\forall j \in J) (\exists p^j \in\normsOn{X}) \,
		\sup_{i \in I} \Lip{p^j}{p_{i, j}}{\pi_i \circ A} < \infty,
	\]
	where for $i \in I$, $\pi_i : \prod_{j \in I} E_j \to E_i$ denotes the canonical projection.
	Then $A$ is continuous. In fact, $\Lip{p^j}{p_j}{A}\leq \sup_{i \in I} \Lip{p^j}{p_{i, j}}{\pi_i \circ A} $ for each $j \in J$.
\end{lem}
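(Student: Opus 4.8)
The plan is to verify the two claimed bounds directly from the definitions, since the statement essentially amounts to an interchange of the supremum over $i \in I$ with the Lipschitz estimates for the component maps. Write $L_j \ndef \sup_{i \in I} \Lip{p^j}{p_{i,j}}{\pi_i \circ A}$, which is finite by hypothesis for each $j \in J$. First I would fix $j \in J$ and $x, y \in V$ and estimate $p_j(A(x) - A(y))$. By the very definition of the restricted-product seminorm, $p_j(A(x) - A(y)) = \sup_{i \in I} p_{i,j}(\pi_i(A(x)) - \pi_i(A(y)))$, using that $\pi_i$ is linear so that $\pi_i(A(x) - A(y)) = \pi_i \circ A(x) - \pi_i \circ A(y)$.

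The heart of the argument is then a single chain of inequalities inside the supremum: for each individual $i \in I$ one has $p_{i,j}(\pi_i \circ A(x) - \pi_i \circ A(y)) \leq \Lip{p^j}{p_{i,j}}{\pi_i \circ A}\, \norm{x - y}_{p^j} \leq L_j \norm{x - y}_{p^j}$, where the first step is exactly the defining property of the Lipschitz constant recorded in the preceding definition (valid since $\Lip{p^j}{p_{i,j}}{\pi_i \circ A} \leq L_j < \infty$), and the second uses the definition of $L_j$ as a supremum. Taking the supremum over $i \in I$ on the left while the right-hand bound $L_j \norm{x - y}_{p^j}$ is independent of $i$ yields $p_j(A(x) - A(y)) \leq L_j \norm{x - y}_{p^j}$. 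By the definition of $\Lip{p^j}{p_j}{A}$ as an infimum of admissible constants, this shows $\Lip{p^j}{p_j}{A} \leq L_j$, which is precisely the quantitative claim.

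Finally I would deduce continuity of $A$ from these Lipschitz bounds. A basic open neighborhood of $A(x_0)$ in $\Linf{\famiI{E_i}}$ is determined by finitely many of the seminorms $p_j$ and a radius $\eps > 0$; given such data, the bound $p_j(A(x) - A(x_0)) \leq L_j \norm{x - x_0}_{p^j}$ shows that choosing $x$ with $\norm{x - x_0}_{p^j}$ small for the relevant indices $j$ keeps $A(x)$ inside the neighborhood, so $A$ is continuous at $x_0$. (If one prefers, continuity also follows abstractly from the observation that a map into a locally convex space defined by seminorms $\set{p_j}{j \in J}$ is continuous as soon as each $p_j \circ (A - A(x_0))$ is continuous, which the Lipschitz estimate provides.) The only point requiring a little care — and the nearest thing to an obstacle — is the bookkeeping around the quasinorm values possibly being infinite: one must note that $A$ genuinely maps into $\Linf{\famiI{E_i}}$, i.e. $p_j(A(x)) < \infty$ for each $x$ and $j$, but this is part of the hypothesis that $A$ takes values in the restricted product, and the Lipschitz inequalities above only ever compare differences, so no difficulty with $\infty - \infty$ arises. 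Hence the argument is essentially a clean exchange of suprema.
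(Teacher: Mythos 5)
Your proposal is correct and follows essentially the same route as the paper: the paper's proof is exactly the single chain of inequalities $\norm{A(x) - A(y)}_{p_j} = \sup_{i \in I} \norm{\pi_i(A(x)) - \pi_i(A(y))}_{p_{i,j}} \leq \sup_{i \in I} \Lip{p^j}{p_{i,j}}{\pi_i \circ A}\, \norm{x - y}_{p^j}$, which is the heart of your argument. Your additional remarks on deducing continuity from the Lipschitz bound and on avoiding $\infty - \infty$ issues are sound elaborations of steps the paper leaves implicit.
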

\begin{proof}
	Let $x, y \in V$ and $j \in J$. We have
	\[
		\norm{A(x) - A(y)}_{p_j}
		= \sup_{i \in I} \norm{\pi_i (A(x)) - \pi_i(A(y))}_{p_{i, j}}
		\leq \sup_{i\in I} \Lip{p^j}{p_{i, j}}{\pi_i \circ A} \norm{x - y}_{p^j}.
	\]
	This finishes the proof.
\end{proof}
\subsubsection*{On the product of restricted products}
We turn to the product $\Linf[J_E]{\famiI{E_i}} \times \Linf[J_F]{\famiI{F_i}}$ of two restricted products.
If the seminorms of both spaces are indexed over the same set, it is isomorphic to another restricted product.
As a preparation, we make the following remark.
\begin{bem}
	For the following, note that if the locally convex spaces $E$ and $F$
	both have a generating family $\fami{p_j^E}{j}{J}$ and $\fami{p_j^F}{j}{J}$ of seminorms indexed over $J$,
	then there exists a generating family of seminorms for $E \times F$ that is indexed over $J$.
	For example, the family $\fami{\max \circ (p_j^E \times p_j^F)}{j}{J}$ generates the product topology on $E \times F$.
\end{bem}
\begin{lem}\label{lem:pktwProduktLInf}
	The sets
	$\Linf{\famiI{E_i \times F_i}}$
	and $\Linf{\famiI{E_i}} \times \Linf{\famiI{F_i}}$
	are isomorphic as topological vector spaces. The canonical isomorphism is the map
	\[
		\Linf{\famiI{E_i \times F_i}}
		\to \Linf{\famiI{E_i}} \times \Linf{\famiI{F_i}}
		:
		\famiI{e_i, f_i} \mapsto (\famiI{e_i}, \famiI{f_i})
		,
	\]
	and
	\[
		\Linf{\famiI{E_i}} \times \Linf{\famiI{F_i}} \to \Linf{\famiI{E_i \times F_i}}
		:
		(\famiI{e_i}, \famiI{f_i}) \mapsto \famiI{e_i, f_i}
	\]
	its inverse.
\end{lem}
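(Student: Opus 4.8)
The plan is to reduce the statement to a single elementary identity about suprema, exploiting the fact that both spaces are, by construction, topologized by seminorm families indexed over the \emph{same} set $J$. On the level of underlying vector spaces the rearrangement $\famiI{e_i, f_i} \mapsto (\famiI{e_i}, \famiI{f_i})$ is visibly a linear bijection between $\prod_{i\in I}(E_i \times F_i)$ and $\prod_{i\in I} E_i \times \prod_{i\in I} F_i$, with the stated inverse; all that needs checking is that it restricts to a bijection between the two restricted products and that it (together with its inverse) is continuous. I would obtain both facts simultaneously by showing that the corresponding generating quasinorms coincide.

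Concretely, I would fix for each $i$ the seminorms $p_{i,j}^E$ on $E_i$ and $p_{i,j}^F$ on $F_i$, and equip $E_i \times F_i$ with the seminorms $\max(p_{i,j}^E, p_{i,j}^F)$ furnished by the preceding remark, so that the quasinorm defining $\Linf{\famiI{E_i\times F_i}}$ is $\famiI{e_i,f_i}\mapsto \sup_{i\in I}\max\bigl(p_{i,j}^E(e_i), p_{i,j}^F(f_i)\bigr)$. On the target side, the remark supplies a generating family indexed over $J$, namely $\max(p_j^E, p_j^F)$, whose value at $(\famiI{e_i},\famiI{f_i})$ is $\max\bigl(\sup_{i}p_{i,j}^E(e_i), \sup_{i}p_{i,j}^F(f_i)\bigr)$. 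The one substantive step is the interchange identity
\[
	\sup_{i\in I}\max(a_i, b_i) = \max\bigl(\sup_{i\in I} a_i,\, \sup_{i\in I} b_i\bigr)
\]
for families $(a_i), (b_i)$ in $[0,\infty]$, which holds because each side dominates the other by monotonicity of $\sup$ and $\max$. Applying it with $a_i = p_{i,j}^E(e_i)$ and $b_i = p_{i,j}^F(f_i)$ shows that the two quasinorms agree value-for-value under the rearrangement, for every $j \in J$.

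From this equality everything follows: an element of $\prod_{i\in I}(E_i\times F_i)$ has all the quasinorms finite precisely when its image has all the quasinorms $\max(p_j^E,p_j^F)$ finite, so the bijection restricts to the restricted products; and since it carries each defining seminorm to the corresponding one with identical values, it is a homeomorphism with continuous inverse, hence an isomorphism of topological vector spaces. The only point requiring care --- and it is minor --- is invoking the remark to pass from the naive product of the two seminorm families on $\Linf{\famiI{E_i}}\times\Linf{\famiI{F_i}}$ to an equivalent family indexed over $J$; once this is done the identification is purely formal.
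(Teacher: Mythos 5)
Your proof is correct, and it reaches the conclusion by a somewhat different and arguably cleaner route than the paper. The paper treats the two directions separately with one-sided estimates: for the map $A$ it bounds $p^E_{k,j}(e_k)$ by $\max\bigl(p^E_{k,j}(e_k), p^F_{k,j}(f_k)\bigr)$ componentwise and then invokes the Lipschitz criterion of Lemma~\ref{lem:L-Stetigkeit_Abb_in_LinfProd} to get continuity, and for the inverse $B$ it uses $\max(a,b)\leq a+b$ together with the continuous seminorm $p^E_j\circ\mathrm{pr}_1+p^F_j\circ\mathrm{pr}_2$ on the product, again via the Lipschitz lemma. You instead prove the exact interchange identity $\sup_{i\in I}\max(a_i,b_i)=\max\bigl(\sup_{i\in I} a_i,\sup_{i\in I} b_i\bigr)$ in $[0,\infty]$, which shows that the rearrangement carries each generating quasinorm of $\Linf{\famiI{E_i\times F_i}}$ onto the corresponding seminorm $\max\bigl(p^E_j\circ\mathrm{pr}_1, p^F_j\circ\mathrm{pr}_2\bigr)$ of the target \emph{with equal values}. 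This single equality delivers both the restriction to the restricted products (finiteness on one side iff on the other) and bicontinuity simultaneously, exhibiting the map as an isometry for these seminorm families. The one external input --- that the $\max$-families generate the relevant topologies on $E_i\times F_i$ and on the product of the two restricted products --- is exactly the remark preceding the lemma, which you correctly flag and invoke. Both arguments are complete; yours dispenses with the detour through the Lipschitz lemma and the asymmetric treatment of the two directions, at no extra cost.
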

\begin{proof}
	We denote the maps defined above by $A$ and $B$, respectively.
	Let $j \in J$ and $k \in I$.
	Then
	\[
		p^E_{k, j}((\pi_k \circ \mathrm{pr}_1 \circ A)\famiI{e_i, f_i})
		= p^E_{k, j}(e_k)
		\leq \max(p^E_{k, j}(e_k), p^F_{k, j}(f_k))
		\leq \max  (p^E_{j} \times p^F_{j})\famiI{e_i, f_i}
		,
	\]
	independent of $k$.
	This shows that $\mathrm{pr}_1 \circ A$ takes values in $\Linf{\famiI{E_i}}$, and since it is linear,
	we can use \refer{lem:L-Stetigkeit_Abb_in_LinfProd} to see that it is continuous to this space.
	Since the same argument can be made for the second factor, we see that $A$ is continuous.

	On the other hand, we have that
	\begin{multline*}
		\max\circ(p^E_{k, j} \times p^F_{k, j})((\pi_k \circ B)(\famiI{e_i}, \famiI{f_i}))
		= \max(p^E_{k, j}(e_k), p^F_{k, j}(f_k))
		\\
		\leq p^E_{k, j}(e_k) + p^F_{k, j}(f_k)
		\leq p^E_{j}\famiI{e_i} + p^F_{j}\famiI{f_i}
		.
	\end{multline*}
	Since $p^E_{j} \circ \mathrm{pr}_1 + p^F_{j} \circ \mathrm{pr}_2$
	is a continuous seminorm on $\Linf{\famiI{E_i}} \times \Linf{\famiI{F_i}}$,
	this shows that $B$ takes values in $\Linf{\famiI{E_i \times F_i}}$, and since it is linear,
	we can use \refer{lem:L-Stetigkeit_Abb_in_LinfProd} to see that it is continuous to this space.
	Now clearly $B = A^{-1}$.
\end{proof}
\subsubsection*{On differentiable functions into a restricted product}
We give a criterion when a function into a restricted product whose component maps are $\ConDiff{}{}{1}$
is differentiable itself. In order to do this, we give a sufficient condition for the completeness of a restricted product.
\paragraph{Completeness of a restricted product}
We prove that a restricted product is complete if all factors are so.
\begin{lem}[Completeness]\label{lem:Linf_compl_wenn_Faktoren_c}
	Let $I$ and $J$ be nonempty sets, $\famiI{E_i}$ be a family of locally convex spaces
	and $\fami{p_{i, j}}{j}{J}$ a family of generating seminorms for $E_i$, for $ i \in I$.
	Further assume that each $E_i$ is complete. Then $\Linf{\famiI{E_i}}$ is complete.
\end{lem}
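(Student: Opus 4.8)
The plan is to show that $\Linf{\famiI{E_i}}$ is complete by taking an arbitrary Cauchy net (or filter) and constructing its limit componentwise, then verifying that the candidate limit actually lies in the restricted product and that convergence holds in the (finer) restricted-product topology rather than merely the product topology. First I would let $\fami{x^\alpha}{\alpha}{A}$ be a Cauchy net in $\Linf{\famiI{E_i}}$, writing $x^\alpha = \famiI{x_i^\alpha}$. For each fixed $i \in I$, the projection $\pi_i$ is continuous and linear (its $p_{i,j}$-seminorm is dominated by $p_j$), so $\fami{x_i^\alpha}{\alpha}{A}$ is a Cauchy net in the complete space $E_i$ and converges to some $x_i \in E_i$. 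This gives a candidate element $x \ndef \famiI{x_i} \in \prod_{i \in I} E_i$.

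The key step is to upgrade pointwise-in-$i$ Cauchyness to a uniform estimate. For each $j \in J$ and each $\eps > 0$, the Cauchy condition supplies an index $\alpha_0$ such that $p_j(x^\alpha - x^\beta) < \eps$ for all $\alpha, \beta \succeq \alpha_0$; by definition of $p_j$ as a supremum, this means $p_{i,j}(x_i^\alpha - x_i^\beta) < \eps$ for \emph{all} $i \in I$ simultaneously. Now I would fix $\alpha \succeq \alpha_0$ and let $\beta \to \infty$: since $p_{i,j}$ is continuous on $E_i$ and $x_i^\beta \to x_i$, each term passes to the limit to give $p_{i,j}(x_i^\alpha - x_i) \leq \eps$, uniformly in $i$. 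Taking the supremum over $i$ yields $p_j(x^\alpha - x) \leq \eps$. This single estimate does double duty: first, combined with $p_j(x^{\alpha_0}) < \infty$ and the triangle inequality, it shows $p_j(x) \leq p_j(x^{\alpha_0}) + \eps < \infty$ for every $j$, so that $x \in \Linf{\famiI{E_i}}$; second, it is exactly the statement that $x^\alpha \to x$ in the restricted-product topology.

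The main obstacle to watch is the interchange of the two limits, namely justifying that $\lim_\beta \sup_i p_{i,j}(x_i^\alpha - x_i^\beta)$ controls $\sup_i \lim_\beta p_{i,j}(x_i^\alpha - x_i^\beta)$; the clean way to avoid any genuine difficulty is to keep the supremum outside and pass to the limit termwise in $i$ as above, which is legitimate because the bound $\eps$ is independent of $i$ and each individual seminorm is continuous. I would phrase the whole argument with nets (or equivalently sequences if one prefers, assuming the spaces are metrizable, but nets avoid that restriction) to cover general locally convex $E_i$. No completeness of $J$ or $I$ is needed; the finiteness of $p_j(x)$ for each $j$ is precisely the membership condition defining $\Linf{\famiI{E_i}}$, and the uniform estimate delivers both membership and convergence at once.
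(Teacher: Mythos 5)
Your proposal is correct and follows essentially the same route as the paper's own proof: componentwise limits via completeness of each $E_i$, then fixing $\alpha$ and passing to the limit in $\beta$ termwise in $i$ to get the uniform estimate $p_j(x^\alpha - x) \leq \eps$, which yields both membership in $\Linf{\famiI{E_i}}$ (by the triangle inequality) and convergence in the restricted-product topology. Your explicit remark on why the limit interchange is harmless is a point the paper leaves implicit, but the argument is the same.
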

\begin{proof}
	Let $\fami{x_\alpha}{\alpha}{A}$ be a Cauchy net in $\Linf{\famiI{E_i}}$.
	Then for each $i \in I$, obviously $\fami{\pi_i(x_\alpha)}{\alpha}{A}$
	is a Cauchy net in $E_i$, and since $E_i$ is complete, it converges to some $x_i \in E_i$.
	We show that $\famiI{x_i} \in \Linf{\famiI{E_i}}$ and that $\fami{x_\alpha}{\alpha}{A}$
	converges to $\famiI{x_i}$. To this end, let $j \in J$.
	Since $\fami{x_\alpha}{\alpha}{A}$ is a Cauchy net,
	for each $\eps > 0$ there exists $\ell \in A$ such that
	\[
		(\forall \alpha, \beta \in A : \alpha, \beta \geq \ell)
		\, \sup_{i \in I} \norm{\pi_i(x_\alpha) - \pi_i(x_\beta)}_{p_{i, j}} < \eps .
	\]
	We fix $\alpha$ in this estimate, and for each $i \in I$, we take $\pi_i(x_\beta)$ to its limit.
	Then we get that
	\[
		(\forall \alpha \in A : \alpha \geq \ell)
		\, \sup_{i \in I} \norm{\pi_i(x_\alpha) - x_i }_{p_{i, j}} \leq \eps .
	\]
	Hence
	\[
		\norm{ \famiI{x_i} }_{p_{j}}
		\leq \norm{ x_\ell }_{p_{j}} + \norm{ \famiI{x_i} - x_\ell }_{p_{j}}
		< \infty
	\]
	and thus $\famiI{x_i} \in \Linf{\famiI{E_i}}$.
	Since $\eps > 0$ was arbitrary,
	we also see that $\fami{x_\alpha}{\alpha}{A}$ converges to $\famiI{x_i}$.
\end{proof}
\paragraph{Differentiability criterion}
The criterion we present is quite useful.
The reason for this is that often,
we can compute the differentials in terms of the map itself and some well-behaved operations.
\begin{lem}\label{lem:Abb_nach_Linf_Ck_wenn_Komp_Ck_mit_stetigem_Diff}
	Let $U$ be an open nonempty subset of the locally convex space $E$,
	$I$ and $J$ nonempty sets, $\famiI{F_i}$ a family of locally convex spaces
	whose topologies are generated by families of seminorms indexed over $J$.
	Let $f : U \to \Linf{\famiI{F_i}}$ be a map
	such that each component map $f_i : U \to F_i$ is $\ConDiff{}{}{1}$ and the map
	\[
		\famiI{\dA{f_i}{}{}} : U \times E \to \Linf{\famiI{F_i}}
	\]
	is defined and continuous.
	Then $f$ is $\ConDiff{}{}{1}$.
\end{lem}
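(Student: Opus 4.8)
The plan is to verify the two defining properties of a $\ConDiff{}{}{1}$-map directly, with the obvious candidate $\dA{f}{}{} \ndef \famiI{\dA{f_i}{}{}}$ for the differential. By hypothesis this map $U \times E \to \Linf{\famiI{F_i}}$ is defined and continuous, so the continuity requirement in the definition of $\ConDiff{}{}{1}$ is already granted; it only remains to show that for each $u \in U$ and $x \in E$ the difference quotient $\tfrac1t(f(u + tx) - f(u))$ converges to $\famiI{\dA{f_i}{u}{x}}$ in $\Linf{\famiI{F_i}}$ as $t \to 0$. Since convergence in $\Linf{\famiI{F_i}}$ entails convergence of each component, the limit is then automatically $\famiI{\dA{f_i}{u}{x}}$, which confirms both that $\dA{f}{u}{x}$ exists and that it has the proposed value.

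Now fix $u \in U$ and $x \in E$. As $U$ is open, there is $\delta > 0$ with $u + stx \in U$ for all $s \in [0,1]$ and all scalars $t$ with $\abs{t} < \delta$. For such $t$ and each $i \in I$, I would apply the mean value inequality $p(c(1) - c(0)) \leq \sup_{s \in [0,1]} p(c'(s))$ (valid for $\ConDiff{}{}{1}$-curves $c$ and continuous seminorms $p$, without any completeness assumption) to the curve $c(s) = f_i(u + stx) - st\,\dA{f_i}{u}{x}$, whose derivative is $c'(s) = t\big(\dA{f_i}{u + stx}{x} - \dA{f_i}{u}{x}\big)$ by the chain rule and linearity of $\dA{f_i}{v}{\cdot}$. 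For every generating seminorm $p_{i,j}$ of $F_i$ this gives
\[
	p_{i, j}\Big( \tfrac1t\big(f_i(u + tx) - f_i(u)\big) - \dA{f_i}{u}{x} \Big)
	\leq \sup_{s \in [0,1]} p_{i, j}\big( \dA{f_i}{u + stx}{x} - \dA{f_i}{u}{x} \big).
\]

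Taking the supremum over $i \in I$ and interchanging the two suprema, the left-hand side becomes the $\Linf{}$-seminorm $p_j$ of $\tfrac1t(f(u + tx) - f(u)) - \famiI{\dA{f_i}{u}{x}}$, while the right-hand side is dominated by $\sup_{s \in [0,1]} p_j\big( \famiI{\dA{f_i}{u+stx}{x}} - \famiI{\dA{f_i}{u}{x}} \big)$. This step is the crux of the argument: the finer-than-product topology of the restricted product is exactly absorbed into a single $\Linf{}$-seminorm, and this is possible only because the mean value inequality supplies a bound that is uniform in $i$. To conclude, I would use that $(s,t) \mapsto \famiI{\dA{f_i}{u + stx}{x}}$ is continuous into $\Linf{\famiI{F_i}}$ (by the assumed continuity of $\famiI{\dA{f_i}{}{}}$ together with the affine dependence on $(s,t)$), so that the nonnegative scalar function $(s,t) \mapsto p_j\big(\famiI{\dA{f_i}{u+stx}{x}} - \famiI{\dA{f_i}{u}{x}}\big)$ is continuous and vanishes on $[0,1] \times \sset{0}$; a routine compactness argument on $[0,1]$ then forces the above supremum to tend to $0$ as $t \to 0$. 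This yields the required convergence of the difference quotients and completes the proof that $f$ is $\ConDiff{}{}{1}$.
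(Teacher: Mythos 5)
Your proof is correct, but it follows a genuinely different route from the paper. The paper represents the difference quotient as a weak integral, $\tfrac1t\bigl(f(x+th)-f(x)\bigr)=\Mint{\famiI{\dA{f_i}{x+sth}{h}}}{s}$, which forces it to first establish existence of that integral in $\Linf{\famiI{\widetilde{F_i}}}$ for the completions $\widetilde{F_i}$ via the completeness result \refer{lem:Linf_compl_wenn_Faktoren_c}, identify the integral with the difference quotient by the mean value theorem, and then let $t\to 0$ using the continuity of parameter-dependent integrals (\refer{lem:Stetigkeit_parameterab_Int}). You instead apply the componentwise mean value \emph{inequality} to $c(s)=f_i(u+stx)-st\,\dA{f_i}{u}{x}$, which yields a bound on $p_{i,j}$ of the $i$-th component of the error that is uniform in $i$, so that interchanging the two suprema converts it into a bound by $\sup_{s\in[0,1]}p_j\bigl(\famiI{\dA{f_i}{u+stx}{x}}-\famiI{\dA{f_i}{u}{x}}\bigr)$; a standard compactness argument on $[0,1]$ then sends this to $0$. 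The trade-off: your argument is more elementary and self-contained --- it needs neither completions nor the weak-integral machinery, only the Hahn--Banach-based mean value inequality --- whereas the paper's argument is shorter given the lemmas already on hand and produces the integral formula for the difference quotient as a byproduct. Both correctly identify $\dA{f}{}{}=\famiI{\dA{f_i}{}{}}$, whose continuity is the hypothesis, so nothing further is needed.
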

\begin{proof}
	Let $x \in U$ and $h \in E$. Choose $\eps > 0$ so small
	that $x + \Ball[\K]{0}{\eps} h \sub U$.
	By our assumptions, the map
	\[
		\Ball[\K]{0}{\eps} \times [0, 1] \to \Linf{\famiI{F_i}}
		: (t, s) \mapsto \famiI{\dA{f_i}{x + s t h}{h}}
	\]
	is continuous.
	Hence we see with \refer{lem:Linf_compl_wenn_Faktoren_c} that
	for each $t \in \Ball[\K]{0}{\eps}$,
	$\Mint{ \famiI{\dA{f_i}{x + s t h}{h}} }{s}$ exists in $\Linf{\famiI{\widetilde{F_i}}}$,
	where $\widetilde{F_i}$ denotes the completion of $F_i$.
	Using the mean value theorem, we conclude that the integral exists
	in $\Linf{\famiI{F_i}}$ with the value $\frac{1}{t}(f(x + t h) - f(x))$, if $t \neq 0$.
	Hence we see with the continuity of parameter-dependent integrals
	(\refer{lem:Stetigkeit_parameterab_Int})
	that $f$ is $\ConDiff{}{}{1}$ with
	$\dA{f}{x}{h} = \famiI{\dA{f_i}{x}{h} }$.
\end{proof}

\subsubsection*{On the product of multilinear maps}
The last result about the general restricted products is about the continuity of a product of multilinear maps.
It assures the continuity if the factors maps are kind of \enquote{uniformly bounded}
for each generating seminorm of the restricted product.
\begin{lem}[Multilinear maps]\label{lem:m-lin_Abb_glm_stetig->Prod_stetig}
	Let $I$ and $J$ be nonempty sets, $m \in \N$, $E_1, \dotsc, E_m$ be locally convex spaces
	and $\famiI{F_i}$ a family of locally convex spaces
	such that the topology of each $F_i$ is generated by a family $\fami{p_{i, j}}{j}{J}$ of seminorms.
	Further, for each $i \in I$ let $\beta_i : E_1 \times \dotsm \times E_m \to F_i$
	be an $m$-linear map such that
	\begin{equation*}
		\tag{\ensuremath{\dagger}}
		\label{cond:glm_Stetigkeit_m-lin_Abb}
		\begin{multlined}[0.92\columnwidth]
			(\forall j \in J) (\exists p_1 \in \normsOn{E_1}, \dotsc, p_m \in \normsOn{E_m},\, C > 0)
			\\
			(\forall i \in I, x_1 \in E_1, \dotsc, x_m \in E_m)\;
			\norm{\beta_i(x_1, \dotsc, x_m)}_{p_{i, j}} \leq C \norm{x_1}_{p_1} \dotsb \norm{x_m}_{p_m}.
		 \end{multlined}
	\end{equation*}
	Then the map
	\[
		\famiI{\beta_i} : E_1 \times \dotsc \times E_m \to \Linf{\famiI{F_i}}
	\]
	is defined, $m$-linear and continuous.
\end{lem}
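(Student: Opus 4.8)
The plan is to establish the three claims — that $\famiI{\beta_i}$ is defined, $m$-linear, and continuous — separately, with the last being the only one that needs real work. The $m$-linearity is immediate once the map is known to be defined: the linear structure of $\Linf{\famiI{F_i}}$ is inherited from the product $\prod_{i \in I} F_i$ and is therefore computed componentwise, and each component $\pi_i \circ \famiI{\beta_i} = \beta_i$ is $m$-linear by hypothesis.

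For well-definedness I would fix $j \in J$ and feed it into \eqref{cond:glm_Stetigkeit_m-lin_Abb}, obtaining seminorms $p_1 \in \normsOn{E_1}, \dotsc, p_m \in \normsOn{E_m}$ and a constant $C > 0$ with $\norm{\beta_i(x_1, \dotsc, x_m)}_{p_{i,j}} \leq C \norm{x_1}_{p_1} \dotsb \norm{x_m}_{p_m}$ for all $i \in I$ and all $x_k \in E_k$. The decisive feature is that this bound is uniform in $i$, so taking the supremum over $i \in I$ turns it into an estimate for the restricted-product seminorm $p_j$, namely
\[
	p_j\bigl(\famiI{\beta_i(x_1, \dotsc, x_m)}\bigr)
	= \sup_{i \in I} \norm{\beta_i(x_1, \dotsc, x_m)}_{p_{i,j}}
	\leq C \norm{x_1}_{p_1} \dotsb \norm{x_m}_{p_m} < \infty .
\]
Since this holds for every $j \in J$, the tuple $\famiI{\beta_i(x_1, \dotsc, x_m)}$ indeed lies in $\Linf{\famiI{F_i}}$.

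For continuity I would argue directly, reducing continuity at an arbitrary point to the estimate above by means of the multilinear telescoping identity
\[
	\famiI{\beta_i}(x) - \famiI{\beta_i}(a)
	= \sum_{k=1}^m \famiI{\beta_i}(x_1, \dotsc, x_{k-1}, x_k - a_k, a_{k+1}, \dotsc, a_m),
\]
which holds in $\Linf{\famiI{F_i}}$ because it holds in each factor $F_i$ and every term on the right is already known to be defined. Applying $p_j$ together with the displayed bound to each summand shows that $p_j(\famiI{\beta_i}(x) - \famiI{\beta_i}(a))$ is controlled by a sum of products of seminorms in which exactly one factor is of the form $\norm{x_k - a_k}_{p_k}$; for $x$ in a fixed neighbourhood of $a$ the remaining factors stay bounded, so the whole expression tends to $0$ as $x \to a$. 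As $j \in J$ was arbitrary, this yields continuity at $a$, hence everywhere. I do not expect a genuine obstacle here; the only two points to watch are that continuity must be verified at every point and not merely at the origin — since $\famiI{\beta_i}$ is not translation-invariant, which is precisely what the telescoping identity repairs — and that the uniformity in $i$ built into \eqref{cond:glm_Stetigkeit_m-lin_Abb} is exactly what lets the per-factor bounds descend to the seminorms $p_j$ of the restricted product.
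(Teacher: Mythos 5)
Your argument is correct and is essentially the paper's proof: the well-definedness step (take the supremum over $i$ in \eqref{cond:glm_Stetigkeit_m-lin_Abb} to bound $p_j$) is identical, and where the paper simply invokes the standard fact that an $m$-linear map satisfying such estimates is continuous at $0$ and hence everywhere, you spell out that fact via the telescoping identity. No gap; you have merely inlined the routine lemma the paper cites implicitly.
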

\begin{proof}
	We conclude from \ref{cond:glm_Stetigkeit_m-lin_Abb} that for $j \in J$ and $x_1 \in E_1, \dotsc, x_m \in E_m$,
	\[
		\norm{\famiI{\beta_i(x_1, \dotsc, x_m)}}_{p_j}
		\leq C \norm{x_1}_{p_1} \dotsb \norm{x_m}_{p_m}.
	\]
	From this estimate, we conclude that $\famiI{\beta_i(x_1, \dotsc, x_m)} \in \Linf{\famiI{F_i}}$.
	Further, since $\famiI{\beta_i}$ is obviously $m$-linear,
	we see that it is continuous in $0$ and hence continuous.
\end{proof}

\subsection{Restricted products of weighted functions}
We now turn our attention to special restricted products,
where each factor is a weighted function space of the kind examined in \cite[Chpt.~3]{MR2952176}.
Since we know the topology of these spaces and plenty of operations on and between them very well,
we are able to derive more results about them than in the general case.
We give the definition and then adapt some previous results about the topological and uniform structure.
\subsubsection{Definition, topological and uniform structure}
\begin{defi}
	Let $I$ be a nonempty set, $\famiI{U_i}$ a family such that each $U_i$
	is an open nonempty set of a normed space $X_i$,
	$\famiI{Y_i}$ another family of normed spaces,
	$\GewFunk \sub \cl{\R}^{\disjointUI{U_i}}$ a nonempty family of weights
	defined on the disjoint union $\disjointUI{U_i}$ of $\famiI{U_i}$,
	and $k \in \cl{\N}$.
	For $i \in I$ and $f \in \GewFunk$, we set $f_i \ndef \rest{f}{U_i}$,
	and further $\GewFunk_i \ndef \set{f_i}{f \in \GewFunk}$.
	Then the topology of each space $\CF{U_i}{Y_i}{\GewFunk_i}{k}$
	is induced by a family of seminorms
	indexed over $\GewFunk \times \set{\ell \in \N}{\ell \leq k}$;
	for $i \in I$, we map $f \in \GewFunk$ and $\ell \in \N$ with $\ell \leq k$ to $\hn{\cdot}{f_i}{\ell}$.
	We define
	\[
		\CcFproI{U_i}{Y_i}{k}
				\ndef
		\Linf[\set{\hn{\cdot}{f}{\ell} }{(f, \ell) \in \GewFunk \times \set{n \in \N}{\,n \leq k}}]{\famiI{ \CF{U_i}{Y_i}{\GewFunk_i}{k} }}.
	\]
	The seminorms that generate the topology on this space are of the form
	\[
		\hn{\famiI{\phi_i} }{f}{\ell}
		\ndef
		\sup_{i \in I} \hn{\phi_i}{f_i}{\ell},
	\]
	where $f \in \GewFunk$ and $\ell \in \N$ with $\ell \leq k$.
\end{defi}

\begin{lem}\label{lem:CinfLinf_initial_CkLinf}
	$\CcFproI{U_i}{Y_i}{\infty}$ is endowed with the initial topology
	of the inclusion maps
	\[
		\CcFproI{U_i}{Y_i}{\infty}
		\to
		\CcFproI{U_i}{Y_i}{k},
	\]
	for $k \in \N$. Moreover,
	$
		\CcFproI{U_i}{Y_i}{\infty} = \varprojlim_{k \in \N} \CcFproI{U_i}{Y_i}{k}
	$.
\end{lem}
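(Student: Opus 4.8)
The plan is to reduce both assertions to a direct comparison of the generating families of seminorms, using that the transition maps in question are nothing but set-theoretic inclusions. First I would record that each inclusion $\CcFproI{U_i}{Y_i}{\infty} \to \CcFproI{U_i}{Y_i}{k}$ is well defined and linear: a family that is smooth and satisfies $\sup_{i \in I} \hn{\phi_i}{f_i}{\ell} < \infty$ for all $(f,\ell) \in \GewFunk \times \N$ is in particular $k$-times differentiable and satisfies the same bounds for all $\ell \leq k$, hence lies in $\CcFproI{U_i}{Y_i}{k}$.

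For the initial-topology statement I would argue at the level of seminorms. By definition, the topology of $\CcFproI{U_i}{Y_i}{k}$ is generated by the seminorms $\hn{\cdot}{f}{\ell}$ with $(f,\ell) \in \GewFunk \times \set{n \in \N}{n \leq k}$, and that of $\CcFproI{U_i}{Y_i}{\infty}$ by the $\hn{\cdot}{f}{\ell}$ with $(f,\ell) \in \GewFunk \times \N$. Pulling back a generating seminorm of $\CcFproI{U_i}{Y_i}{k}$ along the inclusion gives back the seminorm $\hn{\cdot}{f}{\ell}$ on $\CcFproI{U_i}{Y_i}{\infty}$ verbatim. Since the initial topology with respect to the inclusions is generated by the union, over $k \in \N$, of these pulled-back seminorms, and that union is precisely $\set{\hn{\cdot}{f}{\ell}}{(f,\ell) \in \GewFunk \times \N}$, the initial topology coincides with the given locally convex topology on $\CcFproI{U_i}{Y_i}{\infty}$.

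For the projective-limit statement I would note that the inclusions $\CcFproI{U_i}{Y_i}{k+1} \to \CcFproI{U_i}{Y_i}{k}$ form a projective system whose limit carries the initial topology with respect to the projections onto the $\CcFproI{U_i}{Y_i}{k}$. Because every transition map is injective, a compatible thread is just one family $\famiI{\phi_i}$ viewed in each $\CcFproI{U_i}{Y_i}{k}$, so as a set the projective limit equals $\bigcap_{k \in \N} \CcFproI{U_i}{Y_i}{k}$. A family lies in this intersection exactly when each $\phi_i$ is $k$-times differentiable for every $k$ (hence smooth) and $\sup_{i \in I} \hn{\phi_i}{f_i}{\ell} < \infty$ for every $\ell \leq k$ and every $k$ (hence for every $\ell \in \N$); this is the defining condition of $\CcFproI{U_i}{Y_i}{\infty}$. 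Thus the two spaces agree as sets, the projections correspond to the inclusions, and by the first part the projective-limit topology is exactly the topology of $\CcFproI{U_i}{Y_i}{\infty}$.

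The individual steps are essentially bookkeeping with seminorms and transition maps; the only point that carries genuine content is the set-level identity $\bigcap_{k \in \N} \CcFproI{U_i}{Y_i}{k} = \CcFproI{U_i}{Y_i}{\infty}$. Here one must observe that being of class $\ConDiff{}{}{k}$ for all $k$ is the same as being smooth, and that imposing the uniform bound $\sup_{i \in I} \hn{\phi_i}{f_i}{\ell} < \infty$ for all $\ell \leq k$ across all $k$ is the same as imposing it for all $\ell \in \N$. Both observations are immediate, but they are precisely where the equality of the underlying sets resides, so I would state them explicitly before invoking the first part to conclude that the identification is an isomorphism of locally convex spaces.
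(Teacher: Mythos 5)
Your argument is correct and is essentially the paper's own proof, which simply observes that the seminorms $\hn{\cdot}{f}{\ell}$ with $\ell\leq k$ generate the topology of $\CcFproI{U_i}{Y_i}{k}$ while those with arbitrary $\ell\in\N$ generate that of $\CcFproI{U_i}{Y_i}{\infty}$, so the initial topology of the inclusions is the given one. Your additional spelling-out of the set-level identity $\bigcap_{k\in\N}\CcFproI{U_i}{Y_i}{k}=\CcFproI{U_i}{Y_i}{\infty}$ for the projective-limit claim is exactly the bookkeeping the paper leaves implicit.
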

\begin{proof}
	This is clear from the fact that the seminorms
	$\hn{\cdot}{f}{\ell}$ with $f \in \GewFunk$ and $\ell \leq k$
	define the topology on the right hand side,
	while those with $\ell \in \N$ define the topology on the left.
\end{proof}

\begin{prop}\label{prop:Zerlegungssatz_Familie}
	Let $k \in \N$. Then for $\famiI{\phi_i} \in \prod_{i \in I} \FC{U_i}{Y_i}{1}$,
	we have
	\[
		\famiI{\phi_i} \in \CcFproI{U_i}{Y_i}{k + 1}
		\iff
		\famiI{\phi_i} \in \CcFproI{U_i}{Y_i}{0}
		\text{ and }
		\famiI{\FAbl{\phi_i}} \in \CcFproI{U_i }{ \Lin{X_i}{Y_i} }{k}.
	\]
	The map
	\begin{equation*}
		\CcFproI{U_i}{Y_i}{k + 1}
		\to
		\CcFproI{U_i}{Y_i}{0}
		\times
		\CcFproI{U_i }{ \Lin{X_i}{Y_i} }{k}
		:
		(\famiI{\phi_i}) \mapsto (\famiI{\phi_i}, \famiI{\FAbl{\phi_i} })
	\end{equation*}
	is linear and a topological embedding.
\end{prop}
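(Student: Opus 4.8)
The plan is to read this statement as the restricted-product counterpart of \refer{lem:topologische_Zerlegung_von_CFk} and to reduce everything to a single identity between the generating quasinorms. The engine is the observation that for $\phi \in \FC{U_i}{Y_i}{\ell + 1}$ the iterated derivative $\FAbl[\ell]{(\FAbl{\phi})}$ agrees with $\FAbl[\ell + 1]{\phi}$ under the canonical isometric identification $\Lin[\ell + 1]{X_i}{Y_i} \iso \Lin[\ell]{X_i}{\Lin{X_i}{Y_i}}$, so that $\Opnorm{\FAbl[\ell]{(\FAbl{\phi})}(x)} = \Opnorm{\FAbl[\ell + 1]{\phi}(x)}$ for every $x \in U_i$. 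Multiplying by $\abs{f_i(x)}$, taking the supremum over $x \in U_i$ and then over $i \in I$, this yields
\[
	\hn{\famiI{\FAbl{\phi_i}}}{f}{\ell} = \hn{\famiI{\phi_i}}{f}{\ell + 1}
	\qquad (f \in \GewFunk,\ \ell \in \N),
\]
which I would record first and use throughout.

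For the equivalence I would first handle membership in the individual factor spaces: applying \refer{lem:topologische_Zerlegung_von_CFk} to each $i \in I$ (with $U = U_i$, target $Y_i$, weights $\GewFunk_i$) shows that $\phi_i \in \CF{U_i}{Y_i}{\GewFunk_i}{k + 1}$ if and only if $\phi_i \in \CF{U_i}{Y_i}{\GewFunk_i}{0}$ and $\FAbl{\phi_i} \in \CF{U_i}{\Lin{X_i}{Y_i}}{\GewFunk_i}{k}$; in particular this matches up the differentiability orders coming from the hypothesis $\famiI{\phi_i} \in \prod_{i \in I} \FC{U_i}{Y_i}{1}$. It then remains to match the finiteness of the restricted-product quasinorms. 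By definition, $\famiI{\phi_i} \in \CcFproI{U_i}{Y_i}{k + 1}$ precisely when $\hn{\famiI{\phi_i}}{f}{\ell} < \infty$ for all $f \in \GewFunk$ and $\ell \leq k + 1$. Splitting off $\ell = 0$ and rewriting the cases $1 \leq \ell \leq k + 1$ by the displayed identity as $\hn{\famiI{\FAbl{\phi_i}}}{f}{\ell - 1}$ with $\ell - 1 \leq k$, this is equivalent to $\famiI{\phi_i} \in \CcFproI{U_i}{Y_i}{0}$ together with $\famiI{\FAbl{\phi_i}} \in \CcFproI{U_i}{\Lin{X_i}{Y_i}}{k}$, as asserted.

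For the second part, write $\Phi$ for the map in question. It is linear (differentiation is linear) and injective (its first component is the identity), so it suffices to show that the domain topology coincides with the initial topology induced by $\Phi$. The product topology on the codomain is generated by the quasinorms $(\famiI{\phi_i}, \famiI{\psi_i}) \mapsto \hn{\famiI{\phi_i}}{f}{0}$ from the first factor and $(\famiI{\phi_i}, \famiI{\psi_i}) \mapsto \hn{\famiI{\psi_i}}{f}{\ell}$ with $\ell \leq k$ from the second. Pulling these back along $\Phi$ (so that $\psi_i = \FAbl{\phi_i}$) and invoking the displayed identity turns them into $\hn{\famiI{\phi_i}}{f}{0}$ and $\hn{\famiI{\phi_i}}{f}{\ell + 1}$ with $\ell + 1 \leq k + 1$; collectively these are exactly the generating family $\set{\hn{\cdot}{f}{m}}{f \in \GewFunk,\ m \leq k + 1}$ of $\CcFproI{U_i}{Y_i}{k + 1}$. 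Since the pullbacks of the codomain seminorms generate the domain topology, $\Phi$ is a topological embedding.

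The real content is thus already in \refer{lem:topologische_Zerlegung_von_CFk}; the only genuine work is the bookkeeping of the index shift $\ell \leftrightarrow \ell + 1$ and the verification that the isometric identification of iterated derivatives is compatible with the passage to $\sup_{i \in I}$. The latter is the one point to treat with care, but it is immediate because the identification is pointwise and norm-preserving, so equal operator norms yield equal suprema.
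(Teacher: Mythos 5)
Your proposal is correct and follows essentially the same route as the paper, which simply notes that the statement ``is proved in the same way as'' \refer{lem:topologische_Zerlegung_von_CFk}: the factorwise lemma supplies the differentiability bookkeeping, and the isometric identification $\Opnorm{\FAbl[\ell]{(\FAbl{\phi_i})}(x)} = \Opnorm{\FAbl[\ell+1]{\phi_i}(x)}$ passes through the suprema over $x \in U_i$ and $i \in I$ to give $\hn{\famiI{\FAbl{\phi_i}}}{f}{\ell} = \hn{\famiI{\phi_i}}{f}{\ell+1}$, from which both the equivalence and the embedding property follow. Your write-up fills in exactly the details the paper leaves implicit; no gaps.
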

\begin{proof}
	This is proved in the same way as \refer{lem:topologische_Zerlegung_von_CFk}.
\end{proof}

\paragraph{Lipschitz continuity}
This is an adaptation of \refer{lem:L-Stetigkeit_Abb_in_LinfProd}.
\begin{lem} \label{lem:L-Stetigkeit_Abb_in_LinfProd-gewAbb}
	Let $V$ be an open nonempty subset of the locally convex space $X$.
	Let $A : V \to \CcFproI{U_i}{Y_i}{k}$ be a map such that
	\[
		(\forall f \in \GewFunk, \ell \in \N : \ell \leq k) (\exists p \in\normsOn{X}) \,
		\sup_{i \in I} \Lip{p}{f_i, \ell}{\pi_i \circ A} <\infty.
	\]
	Then $A$ is continuous. In fact, $\Lip{p}{f, \ell}{A}\leq \sup_{i \in I} \Lip{p}{f_i, \ell}{\pi_i \circ A} $.
\end{lem}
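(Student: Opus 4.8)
The plan is to recognize the present statement as the specialization of \refer{lem:L-Stetigkeit_Abb_in_LinfProd} to the restricted product at hand, and then to invoke that lemma directly. First I would recall that, by definition, $\CcFproI{U_i}{Y_i}{k}$ is the restricted product $\Linf{\famiI{E_i}}$ whose factors are the weighted function spaces $E_i \ndef \CF{U_i}{Y_i}{\GewFunk_i}{k}$, whose index set is $J \ndef \GewFunk \times \set{\ell \in \N}{\ell \leq k}$, and whose defining factor seminorms are $p_{i, (f, \ell)} \ndef \hn{\cdot}{f_i}{\ell}$. Under this identification the product seminorm associated with $(f, \ell) \in J$ is precisely $\hn{\cdot}{f}{\ell} = \sup_{i \in I} \hn{\cdot}{f_i}{\ell}$.

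Next I would match the hypotheses. The assumption of the present lemma says that for every $(f, \ell) \in J$ there is a seminorm $p \in \normsOn{X}$ with $\sup_{i \in I} \Lip{p}{f_i, \ell}{\pi_i \circ A} < \infty$; setting $p^{(f, \ell)} \ndef p$, this is verbatim the hypothesis of \refer{lem:L-Stetigkeit_Abb_in_LinfProd} (the openness of $V$ assumed here is immaterial, as that lemma only requires $V$ to be a nonempty subset of $X$). Applying \refer{lem:L-Stetigkeit_Abb_in_LinfProd} then gives both the continuity of $A$ and the estimate $\Lip{p}{f, \ell}{A} \leq \sup_{i \in I} \Lip{p}{f_i, \ell}{\pi_i \circ A}$, which is exactly the claimed bound.

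If a self-contained argument is preferred over an appeal to the general lemma, it suffices to reproduce its short computation in the present notation: for $x, y \in V$ and $(f, \ell) \in J$,
\[
	\hn{A(x) - A(y)}{f}{\ell}
	= \sup_{i \in I} \hn{\pi_i(A(x)) - \pi_i(A(y))}{f_i}{\ell}
	\leq \Big( \sup_{i \in I} \Lip{p}{f_i, \ell}{\pi_i \circ A} \Big) \norm{x - y}_p,
\]
using only the definition of the product seminorm and of the factorwise Lipschitz constants. Since the right-hand side is finite and dominates each generating seminorm of the target, both continuity and the stated Lipschitz estimate follow at once. No genuine obstacle arises; the only point requiring attention --- and the reason the condition is phrased as it is --- is that for each fixed $(f, \ell)$ the source seminorm $p$ must be chosen uniformly in $i \in I$.
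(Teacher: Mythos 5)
Your proposal is correct and matches the paper's proof, which simply cites \refer{lem:L-Stetigkeit_Abb_in_LinfProd}; your identification of the factors, index set, and seminorms makes that reduction explicit, and your optional self-contained computation just reproduces the general lemma's argument. Nothing is missing.
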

\begin{proof}
	This follows from \refer{lem:L-Stetigkeit_Abb_in_LinfProd}.
\end{proof}
\subsubsection{Adjusting weights and open subsets}
Let $I$ be an infinite set and $\famiI{r_i}$ a family of positive real numbers
such that $\inf_{i \in I} r_i = 0$.
If $\GewFunk$ consists only of $1_{\disjointUI{U_i}}$, then the set
$\prod_{i \in I} \CF{U_i}{\Ball[Y_i]{0}{r_i}}{\GewFunk_i}{0}$
is not a neighborhood of $0$ in $\CcFproI{U_i}{Y_i}{0}$.
But since we later need to discuss such sets,
and in particular want functions that are defined on such sets to be differentiable
(think of the Riemannian exponential function),
we must know under which conditions on $\GewFunk$ their interior is not empty.

It turns out that if $\GewFunk$ contains a weight $\omega$ that is \enquote{large enough} on each $U_i$,
then the set $\set{\famiI{\phi_i} \in \CcFproI{U_i}{Y_i}{0} }{\hn{\famiI{\phi_i}}{\omega}{0} < 1}$ is contained in
$\prod_{i \in I} \CF{U_i}{\Ball[Y_i]{0}{r_i}}{\GewFunk_i}{0} \cap \CcFproI{U_i}{Y_i}{0}$,
so the latter is a neighborhood of $0$.
We will call $\omega$ adjusting to the family $\famiI{r_i}$ since $\omega$ adjusts its smallness.
We start with some definitions.
\begin{defi}
	Let $\famiI{U_i}$ and $\famiI{r_i}$ be families such that each $U_i$
	is an open nonempty set of the normed space $X_i$, and each $r_i \in ]0, \infty]$.
	We say that $\omega : \disjointUI{U_i} \to \R$ is
	an \emph{adjusting weight for $\famiI{r_i}$} if for each $i \in I$,
	we have that
	\[
		\sup_{x \in U_i} \abs{\omega_i(x)} < \infty
		\qquad\text{and}\qquad
		\inf_{x \in U_i} \abs{\omega_i(x)} \geq \max\bigl(\tfrac{1}{r_i}, 1\bigr).
	\]
	Notice that generally, $\omega$ itself is \emph{not} bounded.
\end{defi}

\begin{defi}
	Let $\famiI{U_i}$ and $\famiI{V_i}$ be families such that each $U_i$
	is an open nonempty set of the normed space $X_i$
	and each $V_i$ is an open nonempty subset of a normed space $Y_i$,
	$\GewFunk \sub \cl{\R}^{\disjointUI{U_i}}$ a nonempty set
	and $k \in \cl{\N}$.
	Let $\omega : \disjointUI{U_i} \to \R$ with $0 \notin \omega(\disjointUI{U_i})$.
	We set
	\begin{multline*}
		\CcFfoPROi{U_i}{V_i}{k}{\omega}
		\\
		\ndef
		\set{\famiI{\gamma_i} \in \CcFproI{U_i}{Y_i}{k}}%
		{(\exists r > 0)(\forall i \in I, x \in U_i)\,\gamma_i(x) + \Ball[Y_i]{0}{\tfrac{r}{\abs{\omega(x)}}} \sub V_i}.
	\end{multline*}
	In particular, we define
	\[
		\CcFoPROi{U_i}{V_i}{k} \ndef \CcFfoPROi{U_i}{V_i}{k}{(1_{\disjointUI{U_i}})}.
	\]
	Additionally, if each $V_i$ is star-shaped with center $0$,
	then $\omega$ is called an \emph{adjusting weight for $\famiI{V_i}$} if it is an adjusting weight for
	$
		\famiI{\dist{\sset{0}}{\partial V_i}}.
	$
	If it is clear to which family $\omega$ adjusts, we may call $\omega$ just an adjusting weight.
\end{defi}

\begin{bem}\label{bem:CcFfoPRO_pktw_in_CcFo}
	Let $\famiI{U_i}$ and $\famiI{V_i}$ be families such that all $U_i$ and $V_i$
	are open nonempty subsets of the normed spaces $X_i$ respectively $Y_i$,
	$\GewFunk \sub \cl{\R}^{\disjointUI{U_i}}$ a nonempty set, $k \in \cl{\N}$
	and $\omega : \disjointUI{U_i} \to \R$ with $0 \notin \omega(\disjointUI{U_i})$
	such that $\sup_{x \in U_i}\abs{\omega_i(x)} < \infty$ for each $i \in I$.
	Then $\inf_{x \in U_i} \tfrac{1}{\abs{\omega_i}(x)} > 0$, and hence
	\[
		\CcFfoPROi{U_i}{V_i}{k}{\omega}
		\sub \prod_{i \in I} \CcFo{ U_i }{ V_i }{k}.
	\]
\end{bem}
To show that $\prod_{i \in I} \CF{U_i}{\Ball[Y_i]{0}{r_i}}{\GewFunk_i}{0}$
contains a neighborhood of the constant $0$ function,
we estimate the $\hn{\cdot}{1_U}{0}$ seminorm with the $\hn{\cdot}{f}{0}$ seminorm.
\begin{lem}\label{lem:est_1-0-norm_f-0-norm}
	Let $X$ and $Y$ be normed spaces, $U \sub X$ an open nonempty set,
	$f  : U \to \cl{\R}$ such that $0 \notin f(U)$ and $\phi, \psi : U \to Y$.
	\begin{assertions}
		\item\label{ass1:est_1-0-norm_f-0-norm_pktw}
		For all $x \in U$, we have $\norm{\phi(x) - \psi(x)} \leq \frac{\hn{\phi - \psi}{f}{0}}{\abs{f(x)}}$.

		\item\label{ass1:est_1-0-norm_f-0-norm}
		Assume that $\inf_{x \in U} \abs{f(x)} > 0$. Then
		$
			\hn{\phi - \psi}{1_U}{0} \leq \frac{\hn{\phi - \psi}{f}{0}}{ \inf_{x \in U} \abs{f(x)} }.
		$

		\item\label{ass1:est_1-0-norm_f-0-norm_spezielles-f}
		Suppose that $\inf_{x \in U} \abs{f(x)} \geq \max(\tfrac{1}{d}, 1)$,
		where $d > 0$. Then
		\begin{equation}\label{est:1-0-norm_f-0-norm_spezielles-f}
			\hn{\phi - \psi}{1_U}{0} \leq \min(d, 1) \hn{\phi - \psi}{f}{0}.
		\end{equation}
	\end{assertions}
\end{lem}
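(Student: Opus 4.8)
The plan is to prove the three assertions in sequence, each building on the previous one, since all of them are immediate consequences of the definition of the $(f,0)$-quasinorm. Recall that for $k = 0$ the Fréchet derivative is the function itself and its operator norm is the ordinary norm, so that $\hn{\phi}{f}{0} = \sup_{x \in U} \abs{f(x)}\,\norm{\phi(x)}$; in particular, taking $f = 1_U$ gives $\hn{\phi}{1_U}{0} = \sup_{x \in U} \norm{\phi(x)}$. I would make this explicit at the outset and then treat the three parts as a short chain of elementary estimates.

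For assertion \ref{ass1:est_1-0-norm_f-0-norm_pktw}, I would fix $x \in U$ and note that the definition of the supremum immediately gives $\abs{f(x)}\,\norm{(\phi - \psi)(x)} \leq \hn{\phi - \psi}{f}{0}$. Since $0 \notin f(U)$ forces $\abs{f(x)} > 0$, dividing by $\abs{f(x)}$ yields the claimed pointwise bound (the estimate being read in $[0, \infty]$, so that the case $\hn{\phi - \psi}{f}{0} = \infty$ is trivial).

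Assertion \ref{ass1:est_1-0-norm_f-0-norm} then follows by bounding the denominator from below: for every $x \in U$ we have $\abs{f(x)} \geq \inf_{y \in U} \abs{f(y)} > 0$, so part \ref{ass1:est_1-0-norm_f-0-norm_pktw} gives $\norm{(\phi - \psi)(x)} \leq \hn{\phi - \psi}{f}{0} \big/ \inf_{y \in U} \abs{f(y)}$ uniformly in $x$. Taking the supremum over $x$ on the left, which is exactly $\hn{\phi - \psi}{1_U}{0}$, produces the stated inequality.

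Finally, assertion \ref{ass1:est_1-0-norm_f-0-norm_spezielles-f} is obtained from \ref{ass1:est_1-0-norm_f-0-norm} together with the elementary identity $1 / \max(\tfrac{1}{d}, 1) = \min(d, 1)$ valid for $d > 0$: the hypothesis $\inf_{x \in U} \abs{f(x)} \geq \max(\tfrac{1}{d}, 1)$ gives $1 / \inf_{x \in U} \abs{f(x)} \leq \min(d, 1)$, which combined with \ref{ass1:est_1-0-norm_f-0-norm} yields \ref{est:1-0-norm_f-0-norm_spezielles-f}. There is no substantial obstacle here; the only point requiring a moment's attention is that $f$ may take the value $\infty$, but this is harmless because $0 \notin f(U)$ guarantees that every denominator $\abs{f(x)}$ is strictly positive and all estimates are interpreted in the extended reals.
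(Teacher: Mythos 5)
Your proposal is correct and follows exactly the same route as the paper: part \ref{ass1:est_1-0-norm_f-0-norm_pktw} from the definition of $\hn{\cdot}{f}{0}$ and division by $\abs{f(x)}>0$, part \ref{ass1:est_1-0-norm_f-0-norm} by bounding the denominator by the infimum and taking the supremum, and part \ref{ass1:est_1-0-norm_f-0-norm_spezielles-f} via the identity $1/\max(\tfrac{1}{d},1)=\min(d,1)$. The only difference is that you spell out the elementary steps the paper leaves implicit, which is fine.
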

\begin{proof}
	\ref{ass1:est_1-0-norm_f-0-norm_pktw}
	This follows from $\abs{f(x)} \,\norm{\phi(x) - \psi(x)} \leq \hn{\phi - \psi}{f}{0}$.

	\ref{ass1:est_1-0-norm_f-0-norm}
	This is an easy consequence of \ref{ass1:est_1-0-norm_f-0-norm_pktw}.

	\ref{ass1:est_1-0-norm_f-0-norm_spezielles-f}
	This follows from \ref{ass1:est_1-0-norm_f-0-norm}, where we use that
	$\tfrac{1}{\max(\tfrac{1}{d}, 1) } = \min(d, 1)$.
\end{proof}

\begin{lem}
	Let $\famiI{U_i}$ and $\famiI{V_i}$ be families such that each $U_i$
	is an open nonempty set of a normed space $X_i$
	and each $V_i$ is an open nonempty subset of a normed space $Y_i$,
	$k \in \cl{\N}$, $f : \disjointUI{U_i} \to \R$ with $0 \notin f(\disjointUI{U_i})$
	and $\GewFunk \sub \cl{\R}^{\disjointUI{U_i}}$ with $f \in \GewFunk$.
	\begin{assertions}
		\item\label{ass1:CFof_offen}
		$\CcFfoPROi{U_i}{V_i}{k}{f}$ is open in $\CcFproI{U_i}{Y_i}{k}$.
		In fact, it is even open in  $\CcFproI{U_i}{Y_i}{k}$ when this space
		is endowed with the topology of $\CFproI{U_i}{Y_i}{\sset{f}}{0}$.

		\item\label{ass1:CFof_offen_nichtleer_f-gut}
		Assume that each $V_i$ is star-shaped with center $0$ and $f$ is an adjusting weight for $\famiI{V_i}$.
		Then $\CcFfoPROi{U_i}{V_i}{k}{f}$ is not empty. In particular, for $\tau > 0$ we have
		\begin{equation}\label{incl:1-Kugel_f0-norm_sub_CFof}
			\set{\eta \in \CcFproI{U_i}{Y_i}{k}}{ \hn{\eta}{f}{0} < \tau}
			\sub \CcFfoPROi{U_i}{\tau \cdot V_i}{k}{f}.
		\end{equation}
	\end{assertions}
\end{lem}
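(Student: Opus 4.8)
The plan is to treat the two assertions separately, in both cases reducing matters to pointwise estimates involving the single seminorm $\hn{\cdot}{f}{0}$. For the openness statement \ref{ass1:CFof_offen} I would fix $\famiI{\gamma_i} \in \CcFfoPROi{U_i}{V_i}{k}{f}$ together with a witness $r > 0$, so that $\gamma_i(x) + \Ball[Y_i]{0}{r/\abs{f(x)}} \sub V_i$ for all $i \in I$ and $x \in U_i$, and then show that the open $\hn{\cdot}{f}{0}$-ball of radius $r/2$ about $\famiI{\gamma_i}$ is contained in the set. Indeed, if $\hn{\famiI{\psi_i} - \famiI{\gamma_i}}{f}{0} < r/2$, then $\abs{f(x)}\,\norm{\psi_i(x) - \gamma_i(x)} < r/2$ for every $i$ and every $x \in U_i$, so for $y \in \Ball[Y_i]{0}{r/(2\abs{f(x)})}$ the triangle inequality gives $\norm{(\psi_i(x) - \gamma_i(x)) + y} < r/\abs{f(x)}$, whence $\psi_i(x) + y = \gamma_i(x) + \bigl((\psi_i(x) - \gamma_i(x)) + y\bigr) \in V_i$; thus $r/2$ witnesses $\famiI{\psi_i} \in \CcFfoPROi{U_i}{V_i}{k}{f}$. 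As $\famiI{\gamma_i}$ was arbitrary and each such ball is open in the topology generated by the single seminorm $\hn{\cdot}{f}{0}$, i.e.\ that of $\CFproI{U_i}{Y_i}{\sset{f}}{0}$, the set is open in that topology; and since $f \in \GewFunk$ and $0 \leq k$ make $\hn{\cdot}{f}{0}$ one of the generating seminorms of $\CcFproI{U_i}{Y_i}{k}$, that topology is finer, so the set is open there as well.

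For \ref{ass1:CFof_offen_nichtleer_f-gut} the substantial point is the inclusion \ref{incl:1-Kugel_f0-norm_sub_CFof}; nonemptiness then follows at once by taking $\tau = 1$, since the zero family lies in $\CcFproI{U_i}{Y_i}{k}$ with $\hn{0}{f}{0} = 0 < 1$. Writing $r_i \ndef \dist{\sset{0}}{\partial V_i}$, I would fix $\famiI{\eta_i}$ with $\tau' \ndef \hn{\famiI{\eta_i}}{f}{0} < \tau$ and use the pointwise estimate \ref{ass1:est_1-0-norm_f-0-norm_pktw} of \refer{lem:est_1-0-norm_f-0-norm} to get $\norm{\eta_i(x)} \leq \tau'/\abs{f(x)}$ for all $i$ and $x \in U_i$. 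The adjusting-weight hypothesis supplies the uniform lower bound $\abs{f(x)} \geq \max(1/r_i, 1) \geq 1/r_i$ on each $U_i$, and since $V_i$ is open and star-shaped about $0$ the ball $\Ball[Y_i]{0}{r_i}$ lies in $V_i$, hence $\Ball[Y_i]{0}{\tau r_i} \sub \tau V_i$ after scaling. I would then set $r \ndef \tau - \tau' > 0$ and verify that $r$ witnesses membership in $\CcFfoPROi{U_i}{\tau \cdot V_i}{k}{f}$: for $y \in \Ball[Y_i]{0}{r/\abs{f(x)}}$,
\[
	\norm{\eta_i(x) + y} < \frac{\tau' + r}{\abs{f(x)}} = \frac{\tau}{\abs{f(x)}} \leq \tau r_i,
\]
so that $\eta_i(x) + y \in \Ball[Y_i]{0}{\tau r_i} \sub \tau V_i$, as desired.

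Apart from this routine pointwise bookkeeping, the one genuinely delicate step is the final chain of inequalities: the radius $r = \tau - \tau'$ must be chosen once and for all, and it is precisely the cancellation $\tau' + r = \tau$ combined with the uniform bound $\abs{f(x)} \geq 1/r_i$ coming from the adjusting weight that makes the containment hold simultaneously for every $i \in I$ and every $x \in U_i$ with this single $r$. I expect this uniform-in-$(i,x)$ coupling between the smallness of $\eta_i$ (measured by $\tau'$) and the room available inside $V_i$ (measured by $r_i$, transported across the disjoint union by $f$) to be the heart of the argument, whereas the openness in \ref{ass1:CFof_offen} is comparatively soft.
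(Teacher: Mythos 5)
Your proof is correct and follows essentially the same route as the paper's: for the openness you show a $\hn{\cdot}{f}{0}$-ball around $\gamma$ lies in the set via the triangle inequality (the paper uses the variable witness $s = r - \hn{\eta-\gamma}{f}{0}$ where you use the uniform witness $r/2$, a purely cosmetic difference), and for the inclusion you combine the pointwise bound $\norm{\eta_i(x)}\leq \hn{\eta}{f}{0}/\abs{f(x)}$ with the adjusting-weight lower bound $\abs{f(x)}\geq 1/r_i$ exactly as the paper does via \refer{lem:est_1-0-norm_f-0-norm}. No gaps.
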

\begin{proof}
	\ref{ass1:CFof_offen}
	Let $\gamma \in \CcFfoPROi{U_i}{V_i}{k}{f}$. Then there exists $r > 0$ such that
	\[
		(\forall i \in I,  x \in U_i)\, \gamma_i(x) + \Ball[Y_i]{0}{\tfrac{r}{\abs{f(x)}}} \sub V_i.
	\]
	We show that
	\[
		\set{\eta \in \CcFproI{U_i}{Y_i}{k}}{ \hn{\eta - \gamma}{f}{0} < r}
		\sub \CcFfoPROi{U_i}{V_i}{k}{f}.
	\]
	To this end, let $\eta$ be an element of set on the left hand side and $s \ndef r - \hn{\eta - \gamma}{f}{0}$.
	Then for $i \in I$, $x \in U_i$ and $h \in \Ball[Y_i]{0}{\tfrac{s}{\abs{f(x)}}}$,
	we have with \refer{lem:est_1-0-norm_f-0-norm} and the triangle inequality
	\[
		\norm{\eta_i(x) - \gamma_i(x) + h}
		< \frac{\hn{\gamma - \eta}{f}{0} }{\abs{f(x)}} + \frac{s}{\abs{f(x)}}
		= \frac{r}{\abs{f(x)}}.
	\]
	Hence
	\[
		\eta_i(x) + h = \gamma_i(x) + \eta_i(x) - \gamma_i(x) + h \in V_i.
	\]
	This shows that $\eta \in \CcFfoPROi{U_i}{V_i}{k}{f}$.

	\ref{ass1:CFof_offen_nichtleer_f-gut}
	Let $\eta$ be an element of the set on the left hand side of \ref{incl:1-Kugel_f0-norm_sub_CFof}.
	We set $r \ndef \tau - \hn{\eta}{f}{0}$. Let $i \in I$, $x \in U_i$ and $h \in \Ball[Y_i]{0}{\tfrac{r}{\abs{f(x)}}}$.
	Then we see with \ref{est:1-0-norm_f-0-norm_spezielles-f} that
	\[
		\norm{\eta_i(x) + h}
		\leq \norm{\eta_i(x)} + \norm{h}
		< \min(1, d_i) \hn{\eta}{f}{0} + \min(1, d_i) (\tau - \hn{\eta}{f}{0}),
	\]
	where $d_i \ndef \dist{\sset{0}}{\partial V_i}$. Hence $\norm{\eta_i(x) + h} < \tau d_i$,
	so $\eta_i(x) + h \in \tau \cdot V_i$.
	This finishes the proof.
\end{proof}

\begin{bem}\label{bem:konstantes-1-Gew_adjust-weight}
	Let $\famiI{U_i}$ be a family such that each $U_i$
	is an open nonempty set of the normed space $X_i$.
	Further, let $\GewFunk \sub \cl{\R}^{\disjointUI{U_i}}$ contain $\omega$ with $\inf_{x \in U} \abs{\omega(x)} > 0$
	(in particular, this holds if $\omega$ is an adjusting weight) and $k \in \cl{\N}$.
	Then for each $\ell \in \N$ with $\ell \leq k$, we see with \refer{lem:est_1-0-norm_f-0-norm} that
	the seminorm $ \hn{\cdot}{1_{\disjointUI{U_i}} }{\ell} $ is continuous on $\CcFproI{U_i}{Y_i}{k}$.
	In particular, $\CcFproI{U_i}{Y_i}{k} = \CFproI{U_i}{Y_i}{\GewFunk \cup \sset{1_{\disjointUI{U_i}}}}{k}$.
\end{bem}
\subsection{Simultaneous superposition and multiplication}
In this subsection, we discuss operations between restricted products of weighted functions
that consist of operations that are defined on a single factor.
The most common operation is the superposition with a family $\famiI{\phi_i}$ of maps
of certain characteristics, i.e. linear, analytic etc.
In contrast to results derived in \cite{MR2952176},
we often have to take a more quantitative approach,
and tailor our assumptions about the permitted weights to $\famiI{\phi_i}$.
\subsubsection{Simultaneous multiplication}
We begin with simultaneous multiplication.
It is pretty straightforward, and \ref{cond:est_sim-multiplier_weights}
provides a good example of the assumptions on the weights that will be made in the following.
\begin{lem}\label{lem:simultane_mult-multiplier}
	Let $\famiI{U_i}$ be a family such that each $U_i$
	is an open nonempty set of the normed space $X_i$,
	and $\famiI{Y_i^1}$, $\famiI{Y_i^2}$, $\famiI{Z_i}$ be families of normed spaces.
	Further, for each $i \in I$ let $M_i : U_i \to Y_i^1$ be smooth,
	and $\beta_i : Y_i^1 \times Y_i^2 \to Z_i$ a bilinear map such that
	\[
		\sup\set{\Opnorm{\beta_i}}{i \in I} < \infty .
	\]
	Assume that $\GewFunk \sub \cl{\R}^{\disjointUI{U_i}}$ is nonempty and
	\begin{equation}\label{cond:est_sim-multiplier_weights}
			(\forall f \in \GewFunk, \ell \in \N)
			(\exists g \in \ExtWeights{\GewFunk})
			\, (\forall i \in I)\, \hn{M_i}{1_{U_i}}{\ell} \abs{f_i} \leq \abs{g_i}.
	\end{equation}
	Then for $k \in \cl{\N}$, the map
	\[
				\CcFproI{U_i}{Y_i^2}{k}
		\to
		\CcFproI{U_i}{Z_i}{k}
		:
		\famiI{\gamma_i} \mapsto \famiI{\beta_i \circ (M_i, \gamma_i)}
	\]
	is defined and continuous linear.
\end{lem}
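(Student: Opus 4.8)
The plan is to reduce the whole statement to a single seminorm estimate. Since each $\beta_i$ is linear in its second argument, the assignment $\famiI{\gamma_i}\mapsto\famiI{\beta_i\circ(M_i,\gamma_i)}$ is linear, so it suffices to bound each topology–defining seminorm $\hn{\cdot}{f}{\ell}$ of the target (with $f\in\GewFunk$ and $\ell\le k$) by finitely many seminorms $\hn{\cdot}{f'}{\ell'}$ of the source; this simultaneously shows that the image lies in $\CcFproI{U_i}{Z_i}{k}$ (finiteness) and that the map is continuous. I would first settle $k\in\N$ and then obtain $k=\infty$ from \refer{lem:CinfLinf_initial_CkLinf}: the target carries the initial topology of the inclusions into the finite–order restricted products, and the map factors as $\CcFproI{U_i}{Y_i^2}{\infty}\hookrightarrow\CcFproI{U_i}{Y_i^2}{k}\to\CcFproI{U_i}{Z_i}{k}$, so continuity for all finite $k$ yields continuity for $k=\infty$.

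For fixed finite $k$ and $\ell\le k$, each $\gamma_i\in\CF{U_i}{Y_i^2}{\GewFunk_i}{k}$ is $k$-times differentiable, and $M_i$, $\beta_i$ are smooth, so $\beta_i\circ(M_i,\gamma_i)$ is $C^k$. The Leibniz rule for the continuous bilinear map $\beta_i$ (proved by an easy induction) gives, for $x\in U_i$,
\[
\FAbl[\ell]{(\beta_i\circ(M_i,\gamma_i))}(x)(h_1,\dots,h_\ell)
= \sum_{A\subseteq\sset{1,\dots,\ell}} \beta_i\big(\FAbl[\abs{A}]{M_i}(x)(h_A),\, \FAbl[\ell-\abs{A}]{\gamma_i}(x)(h_{A^c})\big),
\]
whence, with $C\ndef\sup_i\Opnorm{\beta_i}<\infty$ and $\Opnorm{\FAbl[m]{M_i}(x)}\le\hn{M_i}{1_{U_i}}{m}$,
\[
\abs{f(x)}\,\Opnorm{\FAbl[\ell]{(\beta_i\circ(M_i,\gamma_i))}(x)}
\le C\sum_{m=0}^{\ell}\binom{\ell}{m}\hn{M_i}{1_{U_i}}{m}\,\abs{f(x)}\,\Opnorm{\FAbl[\ell-m]{\gamma_i}(x)}.
\]
By the hypothesis \eqref{cond:est_sim-multiplier_weights}, for each $m$ there is a weight $h_m\in\ExtWeights{\GewFunk}$, independent of $i$, with $\hn{M_i}{1_{U_i}}{m}\abs{f_i}\le\abs{(h_m)_i}$ for all $i$. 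Taking the supremum over $x\in U_i$ and then over $i\in I$ yields
\[
\hn{\famiI{\beta_i\circ(M_i,\gamma_i)}}{f}{\ell}
\le C\sum_{m=0}^{\ell}\binom{\ell}{m}\hn{\famiI{\gamma_i}}{h_m}{\ell-m}.
\]

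I expect the main obstacle to be that the $h_m$ lie in $\ExtWeights{\GewFunk}$ rather than in $\GewFunk$, so the seminorms $\hn{\cdot}{h_m}{\ell-m}$ are not among those generating the topology of the restricted product and must still be controlled \emph{uniformly in $i$}. This is exactly where the restricted–product structure is used. On a single domain, $h_m\in\ExtWeights{\GewFunk}$ means $\hn{\cdot}{h_m}{0}$ is a continuous seminorm, which by the characterization of the maximal extension amounts to a pointwise domination $\abs{h_m}\le C_m\max_{1\le j\le n}\abs{f_j}$ on $\disjointUI{U_i}$ by finitely many $f_j\in\GewFunk$. Since this inequality holds globally with data independent of $i$, it restricts to each $U_i$ with the same $C_m$ and the same $f_j$, giving at once $\hn{\gamma_i}{(h_m)_i}{\ell-m}\le C_m\max_j\hn{\gamma_i}{(f_j)_i}{\ell-m}$ for every $i$; taking the supremum over $i$ gives $\hn{\famiI{\gamma_i}}{h_m}{\ell-m}\le C_m\max_j\hn{\famiI{\gamma_i}}{f_j}{\ell-m}$ on the restricted product. (This is the same mechanism already used for $1_{\disjointUI{U_i}}$ in \refer{bem:konstantes-1-Gew_adjust-weight} via \refer{lem:est_1-0-norm_f-0-norm}.)

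Combining the last two displays bounds $\hn{\famiI{\beta_i\circ(M_i,\gamma_i)}}{f}{\ell}$ by a finite maximum of generating seminorms $\hn{\famiI{\gamma_i}}{f_j}{\ell'}$ with $\ell'\le k$. Hence $\famiI{\beta_i\circ(M_i,\gamma_i)}\in\CcFproI{U_i}{Z_i}{k}$, so the map is defined, and — being linear and bounded on the generating seminorms — it is continuous; the case $k=\infty$ then follows as in the first paragraph. The only nonroutine point is the uniform-in-$i$ transfer of the $\ExtWeights{\GewFunk}$-domination, which rests on the weights in $\GewFunk$ being defined on the common union $\disjointUI{U_i}$.
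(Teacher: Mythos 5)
Your argument is correct, but it takes a genuinely different route from the paper's. The paper proves the lemma by induction on $k$: the base case is exactly your zeroth-order estimate, and the inductive step invokes the reduction-to-lower-order embedding (\refer{prop:Zerlegungssatz_Familie}) together with only the \emph{first}-order product rule $\FAbl{(\beta_i\circ(M_i,\gamma_i))}=\beta_i^{(1)}\circ(\FAbl{M_i},\gamma_i)+\beta_i^{(2)}\circ(M_i,\FAbl{\gamma_i})$, applying the inductive hypothesis to each summand after observing that $\famiI{\FAbl{M_i}}$ again satisfies \ref{cond:est_sim-multiplier_weights} and that $\Opnorm{\beta_i^{(1)}},\Opnorm{\beta_i^{(2)}}\leq\Opnorm{\beta_i}$. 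You instead prove the full higher-order Leibniz formula for the bilinear composition and obtain the single closed-form bound $\hn{\famiI{\beta_i\circ(M_i,\gamma_i)}}{f}{\ell}\leq C\sum_{m}\binom{\ell}{m}\hn{\famiI{\gamma_i}}{h_m}{\ell-m}$; this is more self-contained and makes the combinatorics explicit, at the price of having to establish the Leibniz identity yourself, whereas the paper hides that combinatorics inside \refer{prop:Zerlegungssatz_Familie} and only ever manipulates first derivatives (a decomposition it then reuses verbatim in the later superposition and composition results). Both proofs ultimately rest on the same delicate point you single out: the seminorms $\hn{\cdot}{g}{\ell}$ for $g\in\ExtWeights{\GewFunk}$ must be continuous on the restricted product, which requires the domination of $g$ by members of $\GewFunk$ to be uniform in $i$. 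The paper passes over this silently in its base case; you justify it via a pointwise characterization $\abs{g}\leq C\max_j\abs{f_j}$ of the maximal extension. Be aware that this characterization is not stated in the present paper and does not follow immediately from the definition of $\ExtWeights{\GewFunk}$ given here for arbitrary $\cl{\R}$-valued weights (one cannot in general test with bump or constant functions), so you are importing an external fact from the underlying book — but it is the same fact the paper's own proof tacitly needs, so I do not count it as a gap specific to your argument.
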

\begin{proof}
	We prove this by induction on $k$.

	$k = 0$:
	We calculate for $i \in I$, $x \in U_i$, $\famiI{\gamma_i} \in \CcFproI{U_i}{Y_i^2}{k}$
	and $f \in \GewFunk$ that
	\[
		\abs{f_i(x)}\,\norm{(\beta_i \circ (M_i, \gamma_i))(x)}
		\leq
		\Opnorm{\beta_i} \, \abs{f_i(x)}\, \norm{M_i(x)}\, \norm{\gamma_i(x)}
		\leq
		\Opnorm{\beta_i} \, \hn{\gamma_i}{g_i}{0}.
	\]
	Hence
	\[
		\hn{\famiI{\beta_i \circ (M_i, \gamma_i)}}{f}{0}
		\leq \sup_{i\in I} \Opnorm{\beta_i} \, \hn{\famiI{\gamma_i}}{g}{0},
	\]
	which shows the assertion.

	$k \to k + 1$:
	Using the induction base and \refer{prop:Zerlegungssatz_Familie},
	all we have to show is that for $\famiI{\gamma_i} \in \CcFproI{U_i}{Y_i^2}{k}$, we have
	$\famiI{\FAbl{(b_i \circ (M_i, \gamma_i))} } \in \CcFproI{U_i}{\Lin{\SX_i}{\SZ_i}}{k}$
	and that the map
	\[
		\CcFproI{U_i}{Y_i^2}{k + 1} \to \CcFproI{U_i}{\Lin{\SX_i}{\SZ_i}}{k}
		:
		\famiI{\gamma_i} \mapsto \famiI{\FAbl{(b_i \circ (M_i, \gamma_i))} }
	\]
	is continuous. By \cite[La 3.3.2]{MR2952176},
	for each $i \in I$ we have
	\[
		\FAbl{(\beta_i \circ (M_i, \gamma_i))}
		= \beta_i^{(1)} \circ (\FAbl{M_i}, \gamma) +  \beta_i^{(2)} \circ (M_i, \FAbl{\gamma_i})
	\]
	(using notation as in \cite[Def 3.3.1]{MR2952176}). Hence
	\[
		\famiI{\FAbl{(\beta_i \circ (M_i, \gamma_i))}}
		= \famiI{\beta_i^{(1)} \circ (\FAbl{M_i}, \gamma)} +  \famiI{\beta_i^{(2)} \circ (M_i, \FAbl{\gamma_i})},
	\]
	and we easily calculate that $\Opnorm{\beta_i^{(1)}}, \Opnorm{\beta_i^{(2)}} \leq \Opnorm{\beta_i}$ for each $i \in I$.
	Since $\GewFunk$ and $\famiI{\FAbl{M_i}}$ satisfy \ref{cond:est_sim-multiplier_weights},
	we can apply the inductive hypothesis to both summands and finish the proof.
\end{proof}
\subsubsection{Simultaneous superposition with multilinear maps}
Here, we examine the superpositions with multilinear maps that are uniformly bounded.
It is very similar to \cite[Prop 3.3.3]{MR2952176},
but also involves a result for the more general restricted products defined above.
\begin{lem}\label{lem:multilineareSuperpos-Linf}
	Let $I$ be a nonempty set, $\famiI{X_i}$, $\fami{X_{i, k}}{(i, k)}{I \times \sset{1, \dotsc, n}}$ and $\famiI{Y_i}$
	families of normed spaces, and $U_i \sub X_i$ an open nonempty subset for each $i \in I$.
	Let $\cW_1, \dotsc, \cW_n, \GewFunk \sub \cl{\R}^{\disjointUI{U_i}}$
	be nonempty sets such that
	\[
		(\forall f \in \GewFunk) (\exists g^{f,1} \in \cW_1,\dotsc, g^{f,n} \in\cW_n)
		(\forall i \in I)\, \abs{f_i} \leq \abs{g^{f,1}_i}\dotsm \abs{g^{f, n}_i}.
	\]
	Further, for each $i \in I$, let $\beta_i : X_{i, 1} \times \dotsm \times X_{i, n} \to Y_i$
	be a continuous $n$-linear map such that the set
	\[
		\set{\Opnorm{\beta_i}}{i \in I}
	\]
	is bounded. Then the map
	\begin{align*}
		\beta : \CFproI{U_i }{ X_{i, 1} }{\cW_1}{k}
		\times \dotsm \times\
		\CFproI{U_i }{ X_{i, n} }{\cW_n}{k}
		&\to
		\CcFproI{U_i}{Y_i}{k}\\
		\famiI{\gamma_{i, 1}, \dotsc, \gamma_{i, n}}
		&\mapsto
		\famiI{\beta_i \circ (\gamma_{i, 1}, \dotsc, \gamma_{i, n})}
	\end{align*}
	is defined, $n$-linear and continuous.
\end{lem}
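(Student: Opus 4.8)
The plan is to exhibit $\beta$ as a single instance of \refer{lem:m-lin_Abb_glm_stetig->Prod_stetig}, so that everything reduces to checking its uniform boundedness hypothesis \ref{cond:glm_Stetigkeit_m-lin_Abb}. By definition the target $\CcFproI{U_i}{Y_i}{k}$ is the restricted product $\Linf{\famiI{\CF{U_i}{Y_i}{\GewFunk_i}{k}}}$ with index set $J = \GewFunk \times \set{\ell \in \N}{\ell \leq k}$, the seminorm on the $i$-th factor attached to $(f, \ell) \in J$ being $\hn{\cdot}{f_i}{\ell}$. Writing $\gamma_{i, j}$ for the $i$-th component of $\gamma_j \in \CFproI{U_i}{X_{i, j}}{\cW_j}{k}$, each composition $\beta_i \circ (\gamma_{i, 1}, \dotsc, \gamma_{i, n})$ lies in $\FC{U_i}{Y_i}{k}$ by the chain rule, since the continuous $n$-linear map $\beta_i$ is smooth, and the assignment $(\gamma_1, \dotsc, \gamma_n) \mapsto \beta_i \circ (\gamma_{i, 1}, \dotsc, \gamma_{i, n})$ is $n$-linear. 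Thus $\beta$ is of the form $\famiI{\tilde\beta_i}$ for $n$-linear maps $\tilde\beta_i$ into the factors, $n$-linearity of $\beta$ is immediate, and once \ref{cond:glm_Stetigkeit_m-lin_Abb} is verified, \refer{lem:m-lin_Abb_glm_stetig->Prod_stetig} delivers all three assertions at once.

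The core of the argument is therefore the uniform estimate behind \ref{cond:glm_Stetigkeit_m-lin_Abb}. Fixing $(f, \ell) \in J$, I would first choose $g^{f, 1} \in \cW_1, \dotsc, g^{f, n} \in \cW_n$ with $\abs{f_i} \leq \abs{g^{f, 1}_i} \dotsm \abs{g^{f, n}_i}$ for all $i$, and set $B \ndef \sup_{i \in I} \Opnorm{\beta_i} < \infty$. The higher-order Leibniz rule for the composition of a fixed multilinear map with $n$ differentiable maps gives, at each $x \in U_i$,
\[
	\Opnorm{\FAbl[\ell]{(\beta_i \circ (\gamma_{i, 1}, \dotsc, \gamma_{i, n}))}(x)}
	\leq B \sum_{\ell_1 + \dotsb + \ell_n = \ell} \binom{\ell}{\ell_1, \dotsc, \ell_n}
	\prod_{j = 1}^{n} \Opnorm{\FAbl[\ell_j]{\gamma_{i, j}}(x)},
\]
with the multinomial coefficients summing to $n^\ell$. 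Multiplying by $\abs{f_i(x)}$ and splitting it across the factors via the chosen dominating weights, each product $\abs{g^{f, j}_i(x)}\, \Opnorm{\FAbl[\ell_j]{\gamma_{i, j}}(x)}$ is at most $\hn{\gamma_{i, j}}{g^{f, j}_i}{\ell_j} \leq \hn{\gamma_j}{g^{f, j}}{\ell_j}$; since $\ell_j \leq \ell \leq k$, this is dominated by the continuous seminorm $p_j \ndef \max_{0 \leq m \leq \ell} \hn{\cdot}{g^{f, j}}{m}$ on $\CFproI{U_i}{X_{i, j}}{\cW_j}{k}$. Taking the supremum over $x \in U_i$ and then over $i \in I$ would yield
\[
	\sup_{i \in I} \hn{\beta_i \circ (\gamma_{i, 1}, \dotsc, \gamma_{i, n})}{f_i}{\ell}
	\leq B\, n^\ell\, p_1(\gamma_1) \dotsm p_n(\gamma_n),
\]
which is precisely \ref{cond:glm_Stetigkeit_m-lin_Abb} with $C = B\, n^\ell$; in particular each component composition has all seminorms finite and so lies in $\CF{U_i}{Y_i}{\GewFunk_i}{k}$.

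The one delicate point is the higher-order Leibniz estimate itself, because both the multilinear map $\beta_i$ and the weights vary with $i$: the uniformity of the constant $C = B\, n^\ell$ is exactly what the hypotheses $\sup_{i} \Opnorm{\beta_i} < \infty$ and the product domination of weights are tailored to produce. This estimate is the uniform-in-$i$ analogue of the single-factor statement \cite[Prop.~3.3.3]{MR2952176}. As an alternative that sidesteps writing it out, I could instead induct on $k$ using \refer{prop:Zerlegungssatz_Familie}: the base case $k = 0$ is the weight estimate with no derivatives, and in the inductive step the first-order product rule \cite[La.~3.3.2]{MR2952176} rewrites $\FAbl{(\beta_i \circ (\gamma_{i, 1}, \dotsc, \gamma_{i, n}))}$ as a sum of simultaneous superpositions with multilinear maps of operator norm at most $\Opnorm{\beta_i}$, to which the inductive hypothesis applies, exactly as in \refer{lem:simultane_mult-multiplier}. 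I would present the reduction via \refer{lem:m-lin_Abb_glm_stetig->Prod_stetig}, which is the more economical route.
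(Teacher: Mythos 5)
Your proposal is correct, but its main route differs from the paper's. The paper also funnels the statement through \refer{lem:m-lin_Abb_glm_stetig->Prod_stetig}, but only at order zero: it verifies \ref{cond:glm_Stetigkeit_m-lin_Abb} just for the seminorms $\hn{\cdot}{f_i}{0}$ (where the estimate is $\hn{\beta_i\circ(\gamma_{i,1},\dotsc,\gamma_{i,n})}{f_i}{0}\leq\Opnorm{\beta_i}\prod_{j}\hn{\gamma_{i,j}}{g^{f,j}_i}{0}$, i.e.\ your bound with $\ell=0$ and $C=B$), and then handles higher orders by induction on $k$ via \refer{prop:Zerlegungssatz_Familie} and the first-order product rule of \cite[La 3.3.2]{MR2952176}, which writes $\FAbl{(\beta_i\circ(\gamma_{i,1},\dotsc,\gamma_{i,n}))}$ as a sum of $n$ superpositions with multilinear maps $\beta_i^{(j)}$ of operator norm at most $\Opnorm{\beta_i}$ --- exactly the ``alternative'' you sketch in your last sentences. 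Your primary route instead verifies \ref{cond:glm_Stetigkeit_m-lin_Abb} for every seminorm $\hn{\cdot}{f}{\ell}$ at once through a closed-form higher-order Leibniz estimate with the explicit uniform constant $B\,n^{\ell}$. That is a valid shortcut: the multinomial count, the splitting of $\abs{f_i}$ across the $n$ dominating weights, and the passage to the seminorms $p_j=\max_{0\leq m\leq\ell}\hn{\cdot}{g^{f,j}}{m}$ are all sound, and the argument covers $k=\infty$ with no separate step. What it buys --- a single application of the abstract lemma with an explicit constant --- it pays for by relying on a formula the paper never states, namely the expansion of $\FAbl[\ell]{(\beta_i\circ(\gamma_{i,1},\dotsc,\gamma_{i,n}))}$ over ordered partitions of $\sset{1,\dotsc,\ell}$; you would need to prove this by induction from \cite[La 3.3.2]{MR2952176} (or extract it from the proof of \cite[Prop 3.3.3]{MR2952176}), so the combinatorial bookkeeping saved in the main argument reappears in a preliminary lemma. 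The paper's induction trades the explicit constant for never having to write that formula down.
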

\begin{proof}
	Using \cite[Prop 3.3.3]{MR2952176},
	we have for each $i \in I$ and
	$\gamma_{i, 1} \in \CcF{U_i}{ X_{i, 1}}{k}$, \ldots, $\gamma_{i, n} \in  \CcF{U_i}{X_{i, n} }{k}$
	that
	$\beta_i \circ (\gamma_{i, 1}, \dotsc, \gamma_{i, n}) \in \CcF{U_i}{ Y_i }{k}$.
	Further, $\beta$ is $n$-linear as map to $\prod_{i \in I} \CcF{U_i}{ Y_i }{k}$.
	We prove by induction on $k$ that
	$\beta$ takes values in $\CcFproI{U_i}{Y_i}{k}$ and is continuous.

	$k = 0$:
	We compute for all $i \in I$, $f \in \GewFunk_i$ and
	$\gamma_{i, 1} \in \CF{U_i}{ X_{i, 1}}{\cW_1}{k}$, \ldots, $\gamma_{i, n} \in  \CF{U_i}{X_{i, n} }{\cW_n}{k}$
	that
	\[
		\hn{\beta_i \circ (\gamma_{i, 1}, \dotsc, \gamma_{i, n}) }{f}{0}
		\leq \Opnorm{\beta_i} \prod_{j = 1}^n \hn{\gamma_{i, j}}{ g^{f,j}_i }{0}.
	\]
	Since $i$ was arbitrary, we can apply \refer{lem:m-lin_Abb_glm_stetig->Prod_stetig} to derive the assertion.

	$k \to k + 1$:
	Using the induction base and \refer{prop:Zerlegungssatz_Familie},
	all we have to show is that for
	$\famiI{\gamma_{i, 1}} \in \CFproI{U_i}{ X_{i, 1} }{\cW_1}{k + 1}$, \ldots, $\famiI{\gamma_{i, n}} \in \CFproI{U_i}{ X_{i, n} }{\cW_n}{k + 1}$,
	\[
		\famiI{\FAbl{(\beta_i \circ (\gamma_{i, 1}, \dotsc, \gamma_{i, n}))}}
		\in \CcFproI{U_i}{ \Lin{X_i}{Y_i} }{k},
	\]
	and that the map
	\begin{align*}
		\CFproI{U_i }{ X_{i, 1} }{\cW_1}{k + 1}
		\times \dotsm \times
		\CFproI{U_i }{ X_{i, n} }{\cW_n}{k + 1}
		&\to
		\CcFproI{U_i }{ \Lin{X_i}{Y_i} }{k}\\
		\famiI{\gamma_{i, 1}, \dotsc, \gamma_{i, n}}
		&\mapsto
		\famiI{\FAbl{(\beta_i \circ (\gamma_{i, 1}, \dotsc, \gamma_{i, n}))}}
	\end{align*}
	is continuous. By \cite[La 3.3.2]{MR2952176},
	for each $i \in I$ we have
	\[
		\FAbl{(\beta_i \circ (\gamma_{i, 1}, \dotsc, \gamma_{i, n}))}
		= \sum_{j = 1}^n \beta_i^{(j)} \circ (\gamma_{i, 1}, \dotsc, \FAbl{\gamma_{i, j}},\dotsc, \gamma_{i, n})
	\]
	(using notation as in \cite[Def 3.3.1]{MR2952176}) and hence
	\[
		\famiI{\FAbl{(\beta_i \circ (\gamma_{i, 1}, \dotsc, \gamma_{i, n})})}
		= \sum_{j = 1}^n \famiI{\beta_i^{(j)} \circ (\gamma_{i, 1}, \dotsc, \FAbl{\gamma_{i, j}},\dotsc, \gamma_{i, n})}.
	\]
	Since we easily calculate that $\Opnorm{\beta_i^{(j)}} \leq \Opnorm{\beta_i}$ for each $i \in I$
	and $j \in \sset{1, \dotsc, n}$, we can apply the inductive hypothesis to each summand
	and get the assertion.
\end{proof}
\subsubsection{Simultaneous superposition with differentiable maps}
\label{sususec:Simultaneous_SuPo_smooth_maps}
We provide the simultaneous analogue of \refer{prop:SuperpostionCWZweiVars-id}.
In the proof, we have to use notation introduced in \refer{lem:Abschaetzung_hoheDiffs_Spezialfall-linArg},
as we did in the proof of \ref{prop:SuperpostionCWZweiVars-id}.
Similarly, the technically most challenging part will be
the examination of the superposition with $\famiI{(\beta_i)^{(2)}_{M_i}}$.
Another novelty is the use of adjusting weights.
\begin{prop}\label{prop:simultane_SP_BCinf0_Produkt}
	Let $\famiI{U_i}$ and $\famiI{V_i}$ be families such that each $U_i$
	is an open nonempty set of the normed space $X_i$
	and each $V_i$ is an open, star-shaped subset with center $0$ of a normed space $Y_i$.
	Further, let $\famiI{Z_i}$ be another family of normed spaces
	and $\GewFunk \sub \cl{\R}^{\disjointUI{U_i}}$ contain an adjusting weight $\omega$.
	For each $i \in I$, let $\beta_i \in \FC{ U_i \times V_i}{ Z_i}{\infty}$ be a map
	such that $\beta_i(U_i \times \sset{0}) = \sset{0}$.
	Further, assume that
	\begin{equation}\label{cond:est_SP-Abb_weights}
			(\forall f \in \GewFunk, \ell \in \N^*)
			(\exists g \in \ExtWeights{\GewFunk})
			\, (\forall i \in I)\, \hn{\beta_i}{1_{U_i \times V_i}}{\ell} \abs{f_i} \leq \abs{g_i}
	\end{equation}
	is satisfied.
	Then for $k \in \cl{\N}$, the map
	\[
		\beta_* \ndef \prod_{i \in I} (\beta_i)_* : \CcFfoPROi{U_i}{V_i}{k}{\omega}
		\to
		\CcFproI{U_i}{Z_i}{k}
		:
		\famiI{\gamma_i} \mapsto \famiI{\beta_i \circ (\id{U_i}, \gamma_i)}
	\]
	is defined and smooth.
\end{prop}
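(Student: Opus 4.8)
The plan is to reproduce the proof of \refer{prop:SuperpostionCWZweiVars-id} one level up, at the scale of restricted products: every factorwise statement is supplied by that proposition applied to the individual $\beta_i$, and the only genuine work is to turn the factorwise estimates into bounds that are uniform in $i$, so that the families land in the restricted product and the relevant maps are continuous. Since $\omega$ is an adjusting weight, $\sup_{x\in U_i}\abs{\omega_i(x)}<\infty$ for each $i$, whence $\CcFfoPROi{U_i}{V_i}{k}{\omega}\sub\prod_{i\in I}\CcFo{U_i}{V_i}{k}$ by Remark~\ref{bem:CcFfoPRO_pktw_in_CcFo}; in particular each $\gamma_i$ maps into $V_i$, so \refer{prop:SuperpostionCWZweiVars-id} applies to each $\beta_i$ and yields $\beta_i\circ(\id{U_i},\gamma_i)\in\CcF{U_i}{Z_i}{k}$, the derivative formula of \ref{ass1:Differential_SP2V+est}, and the estimates \ref{est:f0-Norm_SPid} and \ref{est:f0-Norm_SPid-Differenz}. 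I would then argue by induction on $k\in\N$; the case $k=\infty$ follows from \refer{lem:CinfLinf_initial_CkLinf}, since smoothness into the projective limit $\CcFproI{U_i}{Z_i}{\infty}=\varprojlim_k\CcFproI{U_i}{Z_i}{k}$ is implied by smoothness into each $\CcFproI{U_i}{Z_i}{k}$.

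For the base case $k=0$ I would combine \ref{est:f0-Norm_SPid} with $\hn{\parFAbl{2}{\beta_i}}{1_{U_i\times V_i}}{0}\le\hn{\beta_i}{1_{U_i\times V_i}}{1}$ and feed this into \ref{cond:est_SP-Abb_weights} (taken at $\ell=1$): this produces a single $g\in\ExtWeights{\GewFunk}$ with $\hn{\beta_i\circ(\id{U_i},\gamma_i)}{f_i}{0}\le\hn{\gamma_i}{g_i}{0}$ for all $i$, and taking suprema over $i$ shows $\beta_*$ is defined with $\hn{\beta_*\famiI{\gamma_i}}{f}{0}\le\hn{\famiI{\gamma_i}}{g}{0}$. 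For continuity I would work on the ball $\set{\eta}{\hn{\eta-\gamma}{\omega}{0}<r}$, a neighbourhood of $\gamma$ because $\omega\in\GewFunk$: there the buffer in the definition of $\CcFfoPROi{U_i}{V_i}{0}{\omega}$ guarantees that every segment $[\gamma_i(x),\eta_i(x)]$ stays in the convex ball $\gamma_i(x)+\Ball[Y_i]{0}{r/\abs{\omega(x)}}\sub V_i$, so \ref{est:f0-Norm_SPid-Differenz} applies and yields a uniform Lipschitz bound; \refer{lem:L-Stetigkeit_Abb_in_LinfProd-gewAbb} then gives continuity.

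For the step $k\to k+1$ I would invoke \refer{prop:Zerlegungssatz_Familie} and reduce to the continuity of $\famiI{\gamma_i}\mapsto\famiI{\FAbl{(\beta_i\circ(\id{U_i},\gamma_i))}}$ into $\CcFproI{U_i}{\Lin{X_i}{Z_i}}{k}$. By \ref{ass1:Differential_SP2V+est} this splits as $\famiI{\parFAbl{1}{\beta_i}\circ(\id{U_i},\gamma_i)}+\famiI{(\beta_i)^{(2)}_{M_i}\circ(\id{U_i},\gamma_i,\FAbl{\gamma_i})}$. The first summand is handled by the inductive hypothesis applied to the family $\parFAbl{1}{\beta_i}$, which vanishes on $U_i\times\sset{0}$ and satisfies \ref{cond:est_SP-Abb_weights} because $\hn{\parFAbl{1}{\beta_i}}{1_{U_i\times V_i}}{\ell}\le\hn{\beta_i}{1_{U_i\times V_i}}{\ell+1}$. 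The second summand is the main obstacle, exactly as announced before the statement. Here I would use \refer{lem:Abschaetzung_hoheDiffs_Spezialfall-linArg}: estimate \ref{est:Differential-MaMu_hohes_Diff_1-l-Norm} shows that $(\beta_i)^{(2)}_{M_i}$ satisfies \ref{cond:est_SP-Abb_weights} on each enlarged family $\famiI{V_i\times\Ball{0}{R}}$ (using that $\ExtWeights{\GewFunk}$ absorbs combinations $\ell\abs{g_\ell}+R\abs{g_{\ell+1}}$), and---the delicate point---that for $R\ge1$ the same $\omega$ remains an adjusting weight for $\famiI{V_i\times\Ball{0}{R}}$, since the inner radius of $V_i\times\Ball{0}{R}$ equals $\min(d_i,R)$ with $d_i\ndef\dist{\sset{0}}{\partial V_i}$, and $\max(1/\min(d_i,R),1)=\max(1/d_i,1)\le\inf_x\abs{\omega_i(x)}$ when $R\ge1$. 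Because $\hn{\cdot}{1_{\disjointUI{U_i}}}{1}$ is a continuous seminorm by Remark~\ref{bem:konstantes-1-Gew_adjust-weight}, I may fix $R>\hn{\gamma}{1_{\disjointUI{U_i}}}{1}$ and check that $\famiI{(\gamma_i,\FAbl{\gamma_i})}$ lands, with a uniform buffer, in $\CcFfoPROi{U_i}{V_i\times\Ball{0}{R}}{k}{\omega}$; applying the inductive hypothesis to $(\beta_i)^{(2)}_{M_i}$ there and precomposing with the continuous map $\gamma\mapsto\famiI{(\gamma_i,\FAbl{\gamma_i})}$ of \refer{prop:Zerlegungssatz_Familie} gives continuity of the second summand on a neighbourhood of $\gamma$. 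Together with the first summand, \refer{prop:Zerlegungssatz_Familie} then shows $\beta_*$ takes values in $\CcFproI{U_i}{Z_i}{k+1}$ and is continuous.

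For smoothness I would follow the tail of the proof of \refer{prop:SuperpostionCWZweiVars-id}. Noting $\dA{_{2}\beta_i}{}{}=(\beta_i)^{(2)}_{\eval}$, the same mechanism---now combined with $\CcFproI{U_i}{Y_i}{k}=\bigcup_{R>0}\CcFoPROi{U_i}{\Ball{0}{R}}{k}$, valid because $\hn{\cdot}{1_{\disjointUI{U_i}}}{0}$ is continuous---shows that $(\gamma,\gamma_1)\mapsto\famiI{\dA{_{2}\beta_i}{}{}\circ(\id{U_i},\gamma_i,\gamma_{1,i})}$ is defined and continuous. Writing the difference quotient of $\beta_*$ as the weak integral $\Mint{\famiI{\dA{_{2}\beta_i}{}{}\circ(\id{U_i},\gamma_i+st\gamma_{1,i},\gamma_{1,i})}}{s}$ and applying \refer{lem:Stetigkeit_parameterab_Int} shows that $\beta_*$ is $\ConDiff{}{}{1}$ with $\dA{\beta_*}{\gamma}{\gamma_1}=\famiI{\dA{_{2}\beta_i}{}{}\circ(\id{U_i},\gamma_i,\gamma_{1,i})}$; since $\dA{_{2}\beta_i}{}{}$ again meets all hypotheses, an induction on the differentiability order promotes this to $\ConDiff{}{}{\ell}$ for every $\ell$, hence to smoothness. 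The genuinely new difficulty compared with the single-factor case is precisely the verification in the inductive step that the given $\omega$ keeps adjusting for the enlarged families $\famiI{V_i\times\Ball{0}{R}}$, which is what makes the inductive hypothesis legitimately applicable.
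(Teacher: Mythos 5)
Your proposal is correct and follows essentially the same route as the paper's proof: the same induction on $k$ via Proposition~\ref{prop:Zerlegungssatz_Familie}, the same splitting of the derivative into $\parFAbl{1}{\beta_i}\circ(\id{U_i},\gamma_i)$ and $(\beta_i)^{(2)}_{M_i}\circ(\id{U_i},\gamma_i,\FAbl{\gamma_i})$ together with the key observation that $\omega$ remains an adjusting weight for $\famiI{V_i\times\Ball{0}{R}}$ when $R\geq 1$, and the same smoothness argument based on $\dA{_{2}\beta_i}{}{}=(\beta_i)^{(2)}_{\eval}$. The only cosmetic difference is that you inline the weak-integral argument where the paper cites Lemma~\ref{lem:Abb_nach_Linf_Ck_wenn_Komp_Ck_mit_stetigem_Diff}, whose proof supplies the existence of that integral via the completeness result of Lemma~\ref{lem:Linf_compl_wenn_Faktoren_c}.
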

\begin{proof}
	We see with \refer{prop:SuperpostionCWZweiVars-id} (and \refer{bem:CcFfoPRO_pktw_in_CcFo})
	that $\beta_*$ is defined as a map to $\prod_{i \in I} \CcF{U_i}{Z_i}{k}$.
	We first prove by induction on $k$ that $\beta_*$ takes its values in $\CcFproI{U_i}{Z_i}{k}$
	and is continuous.

	$k = 0$:
	Let $f \in \GewFunk$.
	Using \ref{est:f0-Norm_SPid}, we see that
	for $\gamma \in \CcFfoPROi{U_i}{V_i}{k}{\omega}$ and $i \in I$
	\[
		\hn{\beta_i \circ (\id{U_i}, \gamma_i)}{f_i}{0}
		\leq \hn{\parFAbl{2}{\beta_i}}{1_{U_i \times V_i}}{0} \hn{\gamma_i}{f_i}{0}.
	\]
	Since $\hn{\parFAbl{2}{\beta_i}}{1_{U_i \times V_i}}{0} \leq \hn{\beta_i}{1_{U_i \times V_i}}{1}$,
	there exists $g \in \ExtWeights{\GewFunk}$ such that
	\[
		\hn{\famiI{\beta_i \circ (\id{U_i}, \gamma_i)} }{f_i}{0}
		\leq \hn{\gamma}{g_i}{0}.
	\]
	Hence
	\[
		\famiI{\beta_i \circ (\id{U_i}, \gamma_i)} \in \CcFproI{U_i}{Z_i}{0}.
	\]
	With the same reasoning, we see with \ref{est:f0-Norm_SPid-Differenz} that
	for $\eta \in \CcFfoPROi{U_i}{V_i}{k}{\omega}$ in some neighborhood of $\gamma$,
	\[
		\hn{\famiI{\beta_i \circ (\id{U_i}, \gamma_i) - \beta_i \circ (\id{U_i}, \eta_i)} }{f}{0}
		\leq \hn{\gamma - \eta}{g}{0}.
	\]
	So by \refer{lem:L-Stetigkeit_Abb_in_LinfProd-gewAbb},
	$\beta_*$ is locally Lipschitz continuous and hence continuous.

	$k \to k+1$:
	We use \refer{prop:Zerlegungssatz_Familie}.
	For $\famiI{\gamma_i} \in \CcFfoPROi{U_i}{V_i}{k}{\omega}$,
	we have by \refer{prop:SuperpostionCWZweiVars-id}
	using notation from \refer{lem:Abschaetzung_hoheDiffs_Spezialfall-linArg}
	\[
		\famiI{\FAbl{(\beta_i \circ (\id{U_i}, \gamma_i))}}
		= \famiI{\parFAbl{1}{\beta_i} \circ (\id{U_i}, \gamma_i)}
		+ \famiI{(\beta_i)^{(2)}_{M_i} \circ (\id{U_i}, \gamma_i, \FAbl{\gamma_i}) }.
	\]
	(Here, $M_i$ denotes the composition of linear operators).
	For $i \in I$ and $\ell \in \N^*$,
	\[
		\hn{\parFAbl{1}{\beta_i}}{1_{U_i \times V_i}}{\ell} \leq \hn{\beta_i}{1_{U_i \times V_i}}{\ell + 1},
	\]
	and from \ref{est:Differential-MaMu_hohes_Diff_1-l-Norm} we get that
	\[
		\hn{(\beta_i)^{(2)}_{M_i}}{1_{U_i \times V_i \times \Ball[\Lin{X_i}{Y_i}]{0}{R} }}{\ell}
				\leq \ell \hn{\beta_i}{1_{U_i \times V_i}}{\ell}
					+ R \hn{\beta_i}{1_{U_i \times V_i}}{\ell + 1}
	\]
	for each $R > 0$.
	Hence we can apply the inductive hypothesis to see that the maps
	\[
		\CcFfoPROi{U_i}{V_i}{k}{\omega}
		\to
		\CcFproI{U_i }{ \Lin{X_i}{Z_i} }{k}
		:
		\famiI{\gamma_i} \mapsto \famiI{\parFAbl{1}{\beta_i} \circ (\id{U_i}, \gamma_i)}
	\]
	and for $R \geq 1$
	\begin{equation*}
		\CcFfoPROi{U_i}{V_i \times \Ball[\Lin{X_i}{Y_i}]{0}{R}}{k}{\omega}
		\to
		\CcFproI{U_i }{ \Lin{X_i}{Z_i} }{k}
		:
		\famiI{\gamma_i} \mapsto \famiI{(\beta_i)^{(2)}_M \circ (\id{U_i}, \gamma_i) }
	\end{equation*}
	are continuous; here we used that $\omega$ is an adjusting weight for $\famiI{V_i \times \Ball[\Lin{X_i}{Y_i}]{0}{R}}$
	when the product is endowed with the maximum norm of the factor products
	(and also for $\famiI{\Ball[\Lin{X_i}{Y_i}]{0}{R}}$) if $R \geq 1$.
	From the continuity of the latter map, we deduce using \refer{lem:gewichtete_Abb_Produktisomorphie-endl},
	\refer{lem:multilineareSuperpos-Linf} and \refer{lem:pktwProduktLInf} that
	\begin{align*}
		\CcFfoPROi{U_i}{V_i}{k}{\omega} \times \CcFfoPROi{U_i}{\Ball[\Lin{X_i}{Y_i}]{0}{R}}{k}{\omega}
		&\to
		\CcFproI{U_i }{ \Lin{X_i}{Z_i} }{k}
		\\
		(\famiI{\gamma_i}, \famiI{\Gamma_i}) &\mapsto \famiI{(\beta_i)^{(2)}_M \circ (\id{U_i}, \gamma_i, \Gamma_i) }
	\end{align*}
	is continuous. Hence for each $\gamma \in \CcFfoPROi{U_i}{V_i}{k + 1}{\omega}$,
	the map
	\begin{align*}
		\set{ \eta \in  \CcFfoPROi{U_i}{V_i}{k + 1}{\omega} }{ \hn{ \eta }{1_{\disjointUI{U_i}}}{1} <  \hn{ \gamma }{1_{\disjointUI{U_i} }}{1} + 1}
		&\to \CcFproI{U_i }{ \Lin{X_i}{Z_i} }{k}
		\\
		\famiI{\eta_i} &\mapsto (\beta_i)^{(2)}_M\circ(\id{\UF}, \eta_i, \FAbl{\eta_i})
	\end{align*}
	is defined and continuous.
	In view of \refer{bem:konstantes-1-Gew_adjust-weight},
	the domain of this map is a neighborhood of $\gamma$.
	This finishes the inductive proof.

	The case $k = \infty$ follows from the case $k < \infty$
	by means of \refer{lem:CinfLinf_initial_CkLinf}.

	Now we prove that $\beta_*$ is smooth.
	More exactly, we show by induction on $\ell \in \N^*$ that it is $\ConDiff{}{}{\ell}$.

	$\ell = 1$:
	By \refer{prop:SuperpostionCWZweiVars-id},
	for any $i \in I$ the map
	\[
		(\beta_i)_* : \CFo{U_i}{V_i}{\GewFunk_i}{k} \to \CF{U_i}{Z_i}{\GewFunk_i}{k}
		: \gamma \mapsto \beta_i \circ (\id{U_i}, \gamma)
	\]
	is $\ConDiff{}{}{1}$.
	We noted in \ref{id:Differential_SuperposCWZweiVars-id} that its differential is given by
	\[
		\dA{(\beta_i)_*}{\gamma}{\eta}
		= (\dA{_{2}\beta_i}{}{})_{\ast}(\gamma, \eta).
	\]
	Obviously $\dA{_{2}\beta_i}{}{} = (\beta_i)^{(2)}_\eval$,
	where $\eval$ denotes the evaluation of linear operators.
	We see with the same reasoning as above that the map
	\begin{align*}
		\CcFfoPROi{U_i}{V_i}{k}{\omega} \times \CcFproI{U_i}{Y_i}{k}
		\to \CcFproI{U_i}{Z_i}{k}
		:
		(\gamma, \eta) \mapsto \famiI{ (\beta_i)^{(2)}_\eval)_* (\gamma_i, \eta_i) }
	\end{align*}
	is defined and continuous. Hence we can apply \refer{lem:Abb_nach_Linf_Ck_wenn_Komp_Ck_mit_stetigem_Diff}
	to see that $\beta_*$ is $\ConDiff{}{}{1}$ with $\dA{\beta_*}{}{} = \prod_{i \in I}(\dA{_2\beta_i}{}{})_*$.

	$\ell \to \ell + 1$:
	We see with the inductive hypothesis that $\prod_{i \in I}(\dA{_2\beta_i}{}{})_*$ is $\ConDiff{}{}{\ell}$,
	and since $\dA{\beta_*}{}{} = \prod_{i \in I}(\dA{_2\beta_i}{}{})_*$,
	we deduce that $\beta_*$ is $\ConDiff{}{}{\ell + 1}$.
\end{proof}
For technical reasons, we show that for a family $\famiI{\phi_i}$ of smooth maps for which
\ref{cond:est_sim-multiplier_weights} is satisfied for their Fréchet differentials $\famiI{\FAbl{\phi_i}}$,
the family of their ordinary differentials $\famiI{\dA{\phi_i}{}{}}$ satisfies \ref{cond:est_SP-Abb_weights},
at least on bounded subsets.
\begin{lem}\label{lem:vergleich_Bedingungen_simultane-multiplier_simu-Supo}
	Let $\famiI{U_i}$ be a family such that each $U_i$
	is an open nonempty set of a normed space $X_i$
	and $\famiI{Y_i}$ a family of normed spaces.
	Further, for each $i \in I$ let $\beta_i : U_i \to Y_i$ be a smooth map
	and $\GewFunk \sub \cl{\R}^{\disjointUI{U_i}}$ such that \ref{cond:est_sim-multiplier_weights}
	is satisfied for $\famiI{\FAbl{\beta_i}}$.
	Then for each $R > 0$, $\famiI{\rest{\dA{\beta_i}{}{}}{U_i \times \Ball[X_i]{0}{R}}}$ satisfies \ref{cond:est_SP-Abb_weights}.
\end{lem}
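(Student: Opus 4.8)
The plan is to recognize the directional derivative as a map of the special form treated in the first lemma of this section and then to convert the two seminorm estimates it produces into a single weight bound. Fix $R > 0$. Since each $\beta_i$ is smooth, its directional derivative is the smooth map
\[
	\dA{\beta_i}{}{} = \evTwo \circ (\FAbl{\beta_i} \times \id{X_i}) : U_i \times X_i \to Y_i,
\]
where $\evTwo : \Lin{X_i}{Y_i} \times X_i \to Y_i$ is the continuous bilinear evaluation of linear operators, which satisfies $\Opnorm{\evTwo} \leq 1$. In particular $\dA{\beta_i}{}{}$ is linear in its second argument, sends $U_i \times \sset{0}$ to $0$, and has exactly the shape required in assertion \ref{enum1:Abb_linear_2Arg-Spezialfall-est_hohes_Diff}, with $\FAbl{\beta_i}$ playing the role of the auxiliary map and $\evTwo$ that of the bilinear map $b$.

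Next I would apply estimate \ref{est:Abb_linear_2Arg-Spezialfall-hohes_Diff} to $\dA{\beta_i}{}{}$. For $\ell \in \N^*$ and $(x, h) \in U_i \times \Ball[X_i]{0}{R}$ it gives
\[
	\Opnorm{\FAbl[\ell]{(\dA{\beta_i}{}{})}(x, h)}
	\leq \ell \Opnorm{\FAbl[\ell]{\beta_i}(x)} + \norm{h}\,\Opnorm{\FAbl[\ell + 1]{\beta_i}(x)},
\]
where I used $\Opnorm{\FAbl[\ell - 1]{(\FAbl{\beta_i})}(x)} = \Opnorm{\FAbl[\ell]{\beta_i}(x)}$ and $\Opnorm{\FAbl[\ell]{(\FAbl{\beta_i})}(x)} = \Opnorm{\FAbl[\ell + 1]{\beta_i}(x)}$. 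Taking the supremum over $U_i \times \Ball[X_i]{0}{R}$ and bounding $\norm{h} \leq R$ then yields, for every $i \in I$, the uniform estimate
\[
	\hn{\rest{\dA{\beta_i}{}{}}{U_i \times \Ball[X_i]{0}{R}}}{1_{U_i \times \Ball[X_i]{0}{R}}}{\ell}
	\leq \ell\,\hn{\beta_i}{1_{U_i}}{\ell} + R\,\hn{\beta_i}{1_{U_i}}{\ell + 1}.
\]

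Finally I would feed the hypothesis into this estimate. Since $\hn{\beta_i}{1_{U_i}}{m + 1} = \hn{\FAbl{\beta_i}}{1_{U_i}}{m}$ for all $m \in \N$, the assumption that $\famiI{\FAbl{\beta_i}}$ satisfies \ref{cond:est_sim-multiplier_weights} says precisely that for each $f \in \GewFunk$ and each $m \in \N$ there is a $g \in \ExtWeights{\GewFunk}$ with $\hn{\beta_i}{1_{U_i}}{m + 1}\abs{f_i} \leq \abs{g_i}$ for all $i$. So, fixing $f \in \GewFunk$ and $\ell \in \N^*$ and applying this with $m = \ell - 1$ and with $m = \ell$, I obtain $g^{(1)}, g^{(2)} \in \ExtWeights{\GewFunk}$ with $\hn{\beta_i}{1_{U_i}}{\ell}\abs{f_i} \leq \abs{g^{(1)}_i}$ and $\hn{\beta_i}{1_{U_i}}{\ell + 1}\abs{f_i} \leq \abs{g^{(2)}_i}$ for all $i$. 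Multiplying the uniform estimate by $\abs{f_i}$ and combining, the function $g \ndef \ell\,\abs{g^{(1)}} + R\,\abs{g^{(2)}}$ satisfies
\[
	\hn{\rest{\dA{\beta_i}{}{}}{U_i \times \Ball[X_i]{0}{R}}}{1_{U_i \times \Ball[X_i]{0}{R}}}{\ell}\,\abs{f_i} \leq \abs{g_i}
	\qquad (i \in I),
\]
which is exactly \ref{cond:est_SP-Abb_weights} for $\famiI{\rest{\dA{\beta_i}{}{}}{U_i \times \Ball[X_i]{0}{R}}}$. The only point needing care — and the single mild obstacle in an otherwise mechanical argument — is that this $g$ again lies in $\ExtWeights{\GewFunk}$; this holds because $\hn{\cdot}{g}{0} \leq \ell\,\hn{\cdot}{g^{(1)}}{0} + R\,\hn{\cdot}{g^{(2)}}{0}$ is dominated by a sum of continuous seminorms and is therefore itself continuous, so $\ExtWeights{\GewFunk}$ is closed under $\abs{\cdot}$ and nonnegative linear combinations.
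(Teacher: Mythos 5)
Your proof is correct and follows essentially the same route as the paper: both recognize $\dA{\beta_i}{}{}$ as being of the form $b\circ(\FAbl{\beta_i}\times\id{X_i})$ with $b$ the evaluation map, apply estimate \ref{est:Abb_linear_2Arg-Spezialfall-hohes_Diff} to obtain $\hn{\dA{\beta_i}{}{}}{1_{U_i \times \Ball[X_i]{0}{R}}}{\ell} \leq \ell \hn{\FAbl{\beta_i}}{1_{U_i}}{\ell - 1} + R \hn{\FAbl{\beta_i}}{1_{U_i}}{\ell}$, and then convert the hypothesis into \ref{cond:est_SP-Abb_weights}. You merely spell out the final step (constructing $g = \ell\,\abs{g^{(1)}} + R\,\abs{g^{(2)}}$ and checking it lies in $\ExtWeights{\GewFunk}$) which the paper dismisses with ``we easily derive''.
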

\begin{proof}
	Let $i \in I$. Then we derive from \ref{est:Abb_linear_2Arg-Spezialfall-hohes_Diff} that
	for all $\ell \in \N^*$, $x \in U_i$ and $h \in X_i$,
	\[
		\Opnorm{\FAbl[\ell]{\dA{\beta_i}{}{}}(x, h)}
		\leq \ell \Opnorm{\FAbl[\ell - 1]{\FAbl{\beta_i}}(x)} +  \norm{h}\, \Opnorm{\FAbl[\ell]{\FAbl{\beta_i}}(x)}.
	\]
	Hence
	\[
		\hn{\dA{\beta_i}{}{}}{1_{U_i \times \Ball[X_i]{0}{R}}}{\ell}
		\leq \ell \hn{\FAbl{\beta_i}}{1_{U_i}}{\ell - 1} + R \hn{\FAbl{\beta_i}}{1_{U_i}}{\ell},
	\]
	and from this estimate we easily derive that \ref{cond:est_SP-Abb_weights} is satisfied when
	\ref{cond:est_sim-multiplier_weights} is.
\end{proof}
\paragraph{Simultaneous superposition with uniformly bounded maps}
As a corollary, we prove a superposition result that is more in the style of
\cite[Prop.~3.3.12]{MR2952176};
we examine functions that are not necessarily defined on a product
and assume that the norms of the derivatives are uniformly bounded.
First, we state an obvious fact.
\begin{lem}\label{lem:glm_beschraenkte_Abb-Multiplier}
	Let $\famiI{U_i}$ and $\famiI{V_i}$ be families such that each $U_i$
	is an open nonempty subset of the normed space $X_i$
	and each $V_i$ is an open nonempty subset of a normed space $Y_i$.
	Further, let $\famiI{Z_i}$ be another family of normed spaces
	and $\GewFunk \sub \cl{\R}^{\disjointUI{U_i}}$ nonempty.
	For each $i \in I$, let $\beta_i \in \FC{ U_i \times V_i}{ Z_i}{\infty}$ be a map
	such that for each $\ell \in \N^*$,
	\[
		K_\ell \ndef \sup_{i \in I}\sset{\hn{\beta_i}{1_{U_i \times V_i}}{\ell} } < \infty.
	\]
	Then \ref{cond:est_SP-Abb_weights} is satisfied.
\end{lem}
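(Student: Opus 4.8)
The plan is to verify the estimate in \ref{cond:est_SP-Abb_weights} directly, choosing the required dominating weight $g$ to be a single scalar rescaling of the given weight $f$. Fix $f \in \GewFunk$ and $\ell \in \N^*$. By hypothesis the constant $K_\ell = \sup_{i \in I} \hn{\beta_i}{1_{U_i \times V_i}}{\ell}$ is finite, so I would set $g \ndef K_\ell \cdot f$, which yields $\abs{g_i} = K_\ell \abs{f_i}$ for every $i \in I$ (in the degenerate case $K_\ell = 0$ one simply takes $g \ndef 0$, since then $\hn{\beta_i}{1_{U_i \times V_i}}{\ell} = 0$ and both sides of the required inequality vanish).

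With this choice the inequality is immediate from the definition of the supremum: for each $i \in I$ we have $\hn{\beta_i}{1_{U_i \times V_i}}{\ell} \leq K_\ell$, and hence
\[
	\hn{\beta_i}{1_{U_i \times V_i}}{\ell} \abs{f_i} \leq K_\ell \abs{f_i} = \abs{g_i}.
\]
Since $f \in \GewFunk$ and $\ell \in \N^*$ were arbitrary, establishing that $g$ lies in $\ExtWeights{\GewFunk}$ completes the verification of \ref{cond:est_SP-Abb_weights}.

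The only point that deserves a word is therefore that $g = K_\ell f$ belongs to the maximal extension $\ExtWeights{\GewFunk}$. This I would deduce from two facts: first, $\GewFunk \sub \ExtWeights{\GewFunk}$, so $f$ itself lies in $\ExtWeights{\GewFunk}$; second, $\ExtWeights{\GewFunk}$ is stable under multiplication by a nonnegative constant, because $\hn{\cdot}{K_\ell f}{0} = K_\ell \hn{\cdot}{f}{0}$ and scaling a continuous seminorm by $K_\ell \geq 0$ again produces a continuous seminorm (the zero function being trivially admissible). Consequently $K_\ell f \in \ExtWeights{\GewFunk}$, as needed.

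I do not expect any genuine obstacle here: the whole content of the statement is the observation that a \emph{uniform} bound $K_\ell$ on the $\ell$-th derivative seminorms of the $\beta_i$ allows all the factorwise estimates to be absorbed into one rescaling of $f$, and that such a rescaling never leaves $\ExtWeights{\GewFunk}$. The lemma is thus a convenient reformulation of the uniform-boundedness hypothesis as an instance of the weight condition \ref{cond:est_SP-Abb_weights} used in \refer{prop:simultane_SP_BCinf0_Produkt}.
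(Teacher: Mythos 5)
Your proposal is correct and coincides with the paper's own argument: for each $f \in \GewFunk$ and $\ell \in \N^*$ one takes $g = K_\ell f$, notes the pointwise bound $\hn{\beta_i}{1_{U_i \times V_i}}{\ell}\,\abs{f_i} \leq K_\ell \abs{f_i}$, and observes that $K_\ell f \in \ExtWeights{\GewFunk}$. The paper states the last membership without elaboration, whereas you justify it via $\GewFunk \sub \ExtWeights{\GewFunk}$ and stability of $\ExtWeights{\GewFunk}$ under nonnegative scalar multiples; this is the same proof with one step made explicit.
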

\begin{proof}
	Let $\ell \in \N^*$. For $f \in \GewFunk$ and $i \in I$, we have that
	\[
		\hn{\beta_i}{1_{U_i \times V_i}}{\ell} \abs{f_i} \leq K_\ell \abs{f_i}.
	\]
	Since $K_\ell f \in \ExtWeights{\GewFunk}$, the assertion is proved.
\end{proof}
We now prove the result. The main difficulty is that in order to use \refer{prop:simultane_SP_BCinf0_Produkt},
we have to adapt its results for functions that are not necessarily defined on a product.
\begin{cor}\label{cor:simultane_SP_BCinf0_einfach}
	Let $\famiI{U_i}$ and $\famiI{V_i}$ be families such that each $U_i$
	is an open nonempty subset of the normed space $X_i$
	and each $V_i$ is an open subset of a normed space $Y_i$ that is star-shaped with center $0$.
	Further, let $\famiI{Z_i}$ be another family of normed spaces
	and $\GewFunk \sub \cl{\R}^{\disjointUI{U_i}}$ contain an adjusting weight $\omega$.
	For each $i \in I$, let $\beta_i \in \FC{V_i}{ Z_i}{\infty}$ be a map
	such that $\beta_i(0) = 0$.
	Further, assume that for each $\ell \in \N^*$, the set
	\[
		\set{\hn{\beta_i}{1_{V_i}}{\ell}}{i \in I}
	\]
	is bounded. Then for $k \in \cl{\N}$, the map
	\[
		\CcFfoPROi{U_i}{V_i}{k}{\omega}
		\to
		\CcFproI{U_i}{Z_i}{k}
		:
		\famiI{\gamma_i} \mapsto \famiI{\beta_i \circ \gamma_i}
	\]
	is defined and smooth.
\end{cor}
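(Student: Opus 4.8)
The plan is to reduce the assertion to \refer{prop:simultane_SP_BCinf0_Produkt} by an elementary padding device: each $\beta_i$ is defined only on $V_i$, whereas the proposition treats maps on a product $U_i \times V_i$, so I would first turn each $\beta_i$ into a map on the product that simply ignores its first argument. Concretely, for each $i \in I$ I would set
\[
	\tilde{\beta}_i : U_i \times V_i \to Z_i : (x, y) \mapsto \beta_i(y),
\]
that is, $\tilde{\beta}_i = \beta_i \circ \mathrm{pr}_2$, where $\mathrm{pr}_2$ is the (continuous linear, hence smooth) projection onto the second factor. Then $\tilde{\beta}_i \in \FC{U_i \times V_i}{Z_i}{\infty}$, and since $\beta_i(0) = 0$ one has $\tilde{\beta}_i(U_i \times \sset{0}) = \sset{0}$. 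The crucial observation is that, by construction, $\tilde{\beta}_i \circ (\id{U_i}, \gamma_i) = \beta_i \circ \gamma_i$, so the superposition operator that \refer{prop:simultane_SP_BCinf0_Produkt} builds from the family $\famiI{\tilde{\beta}_i}$ coincides on $\CcFfoPROi{U_i}{V_i}{k}{\omega}$ with the map in the assertion.

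It then remains to verify the hypotheses of \refer{prop:simultane_SP_BCinf0_Produkt} for the $\tilde{\beta}_i$; the families $\famiI{U_i}$, $\famiI{V_i}$ (star-shaped with centre $0$), $\famiI{Z_i}$ and the adjusting weight $\omega \in \GewFunk$ are inherited verbatim, so the only point needing work is \ref{cond:est_SP-Abb_weights}. Here I would use the chain rule together with the fact that $\tilde{\beta}_i$ depends on the second variable alone to record, for $\ell \in \N^*$, the identity
\[
	\FAbl[\ell]{\tilde{\beta}_i}(x, y)\eval\bigl((h_1^1, h_2^1), \dotsc, (h_1^\ell, h_2^\ell)\bigr)
	= \FAbl[\ell]{\beta_i}(y)\eval(h_2^1, \dotsc, h_2^\ell).
\]
Endowing $X_i \times Y_i$ with the maximum norm, the projection $\mathrm{pr}_2$ is non-expansive while the inclusion $y \mapsto (0, y)$ is isometric, so passing to operator norms yields $\Opnorm{\FAbl[\ell]{\tilde{\beta}_i}(x, y)} = \Opnorm{\FAbl[\ell]{\beta_i}(y)}$, and taking the supremum over $U_i \times V_i$ gives $\hn{\tilde{\beta}_i}{1_{U_i \times V_i}}{\ell} = \hn{\beta_i}{1_{V_i}}{\ell}$.

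Consequently $K_\ell \ndef \sup_{i \in I} \hn{\tilde{\beta}_i}{1_{U_i \times V_i}}{\ell} = \sup_{i \in I} \hn{\beta_i}{1_{V_i}}{\ell} < \infty$ for every $\ell \in \N^*$ by the boundedness hypothesis, so \refer{lem:glm_beschraenkte_Abb-Multiplier} shows that $\famiI{\tilde{\beta}_i}$ satisfies \ref{cond:est_SP-Abb_weights}. Applying \refer{prop:simultane_SP_BCinf0_Produkt} to $\famiI{\tilde{\beta}_i}$ then gives that $\famiI{\gamma_i} \mapsto \famiI{\tilde{\beta}_i \circ (\id{U_i}, \gamma_i)} = \famiI{\beta_i \circ \gamma_i}$ is defined and smooth, which is the claim. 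In this instance there is no serious obstacle: the entire argument hinges on the reduction device that pads each $\beta_i$ with a dummy first variable, and the only calculation is the operator-norm bookkeeping establishing $\hn{\tilde{\beta}_i}{1_{U_i \times V_i}}{\ell} = \hn{\beta_i}{1_{V_i}}{\ell}$, whose one subtle point is the choice of product norm making $\mathrm{pr}_2$ non-expansive and the second-factor inclusion isometric.
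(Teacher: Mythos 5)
Your proposal is correct and follows essentially the same route as the paper: the paper likewise pads each $\beta_i$ to $\widetilde{\beta}_i(x,y) = \beta_i(y)$, computes $\FAbl[\ell]{\widetilde{\beta}_i} = \mathrm{pr}_2^* \circ (\FAbl[\ell]{\beta_i} \circ \mathrm{pr}_2)$ to get $\hn{\widetilde{\beta}_i}{1_{U_i \times V_i}}{\ell} \leq \hn{\beta_i}{1_{V_i}}{\ell}$, and then invokes \refer{lem:glm_beschraenkte_Abb-Multiplier} and \refer{prop:simultane_SP_BCinf0_Produkt}. The only cosmetic difference is that you claim equality of the quasinorms via a choice of product norm, where the paper only records the inequality $\leq$, which is all that is needed.
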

\begin{proof}
	For each $i \in I$, we define
	$\widetilde{\beta}_i : U_i \times V_i \to Z_i : (x, y) \mapsto \beta_i(y)$.
	We can calculate that
	$\FAbl[\ell]{\widetilde{\beta}_i} = \mathrm{pr}_2^* \circ (\FAbl[\ell]{\beta_i} \circ \mathrm{pr}_2)$
	(and did so in \cite[La. A.1.17]{MR2952176}),
	where $\mathrm{pr}_2 : X_i \times Y_i \to Y_i$ denotes the projection onto the second component.
	So $\hn{\widetilde{\beta}_i}{1_{U_i \times V_i}}{\ell} \leq \hn{\beta_i}{1_{V_i}}{\ell}$
	for all $\ell \in \N$.
	Further $\widetilde{\beta}_i \circ (\id{U_i}, \gamma_i) = \beta_i \circ \gamma_i$
	for each map $\gamma_i : U_i \to V_i$,
	and $\widetilde{\beta}_i(U_i \times \sset{0}) = \sset{0}$.
	Hence we derive the assertion from \refer{prop:simultane_SP_BCinf0_Produkt}
	and \refer{lem:glm_beschraenkte_Abb-Multiplier}.
\end{proof}
\paragraph{Simultaneous superposition with analytic maps}
We prove a result concerning the superposition with analytic maps.
As in \refer{cor:simultane_SP_BCinf0_einfach}, the results derived here
are in the style of \cite[Prop. 3.3.19]{MR2952176}.

We start with simultaneous \enquote{good} complexifications.
\begin{lem}\label{lem:Komplexifizierung_LinfCW_offene-Mengen}
	Let $\famiI{U_i}$ and $\famiI{V_i}$ be families such that each $U_i$
	is an open nonempty set of the normed space $X_i$,
	each $V_i$ is an open set of a real normed space $Y_i$
	and $\famiI{\widetilde{V_i}}$ a family such that for each $i \in I$,
	$\widetilde{V_i}$ is an open neighborhood of $\iota_i(V_i)$ in $(Y_i)_\C$,
	where $\iota_i : \SY_i \to (\SY_i)_\C$ denotes the canonical inclusion.
	Assume that
	\begin{equation}\label{cond:glmAbstand_Rand}
		(\forall i \in I, M \subseteq V_i)
		\,\dist{M}{Y_i\setminus V_i}
		\leq \dist{\iota_i(M)}{(Y_i)_\C\setminus\widetilde{V_i}}.
	\end{equation}
	Then
	\[
		\prod_{i \in I} (\iota_i)_*
		(\CcFoPROi{U_i}{V_i}{k})
		\sub \CcFoPROi{U_i}{\widetilde{V}_i}{k}
	\]
	for each $k \in \cl{\N}$ and $\GewFunk \sub \cl{\R}^{\disjointUI{U_i}}$ containing $1_{\disjointUI{U_i}}$.%

\end{lem}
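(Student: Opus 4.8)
The plan is to unwind both occurrences of the notation and then check the two defining conditions separately. Fix $\famiI{\gamma_i} \in \CcFoPROi{U_i}{V_i}{k}$. By definition this means $\famiI{\gamma_i} \in \CcFproI{U_i}{Y_i}{k}$ together with a single radius $r > 0$ satisfying $\gamma_i(x) + \Ball[Y_i]{0}{r} \sub V_i$ for all $i \in I$ and $x \in U_i$. I must show that $\famiI{\iota_i \circ \gamma_i}$ lies in $\CcFproI{U_i}{(Y_i)_\C}{k}$ and admits a uniform inner radius with respect to the $\widetilde{V}_i$.

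For membership in the restricted product I would use that each $\iota_i$ is the canonical isometric inclusion. By the chain rule $\FAbl[\ell]{(\iota_i \circ \gamma_i)} = \iota_i \circ \FAbl[\ell]{\gamma_i}$, and since an isometry satisfies $\Opnorm{\iota_i \circ A} = \Opnorm{A}$, we obtain $\hn{\iota_i \circ \gamma_i}{f_i}{\ell} = \hn{\gamma_i}{f_i}{\ell}$ for every $f \in \GewFunk$ and $\ell \le k$. Hence $\famiI{\iota_i \circ \gamma_i}$ satisfies exactly the same finiteness and supremum conditions as $\famiI{\gamma_i}$ and therefore lies in $\CcFproI{U_i}{(Y_i)_\C}{k}$; alternatively, this is the case $n = 1$ of \refer{lem:multilineareSuperpos-Linf} applied to the uniformly bounded family $\famiI{\iota_i}$ with $\cW_1 = \GewFunk$.

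For the inner radius I would argue pointwise while keeping $r$ fixed. The inclusion $\gamma_i(x) + \Ball[Y_i]{0}{r} \sub V_i$ says precisely that $\dist{\sset{\gamma_i(x)}}{Y_i \setminus V_i} \ge r$. Applying the distance hypothesis~\ref{cond:glmAbstand_Rand} to the singleton $M = \sset{\gamma_i(x)} \sub V_i$ gives $\dist{\sset{\iota_i(\gamma_i(x))}}{(Y_i)_\C \setminus \widetilde{V}_i} \ge r$, which is equivalent to $\iota_i(\gamma_i(x)) + \Ball[(Y_i)_\C]{0}{r} \sub \widetilde{V}_i$. Combining this with the previous step shows $\famiI{\iota_i \circ \gamma_i} \in \CcFoPROi{U_i}{\widetilde{V}_i}{k}$, as claimed; the argument is uniform in $k$, so the case $k = \infty$ is covered as well.

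Since the statement is essentially a matter of reading the definitions against hypothesis~\ref{cond:glmAbstand_Rand}, I do not expect a genuine obstacle. The one point needing care is the uniformity of the inner radius over the index set $I$: the same $r$ must serve every factor simultaneously, and this is exactly why \ref{cond:glmAbstand_Rand} is imposed uniformly in $i$ rather than factor by factor. A minor bookkeeping point is the passage between open balls and distances to complements, together with the convention that each $\iota_i$ is isometric, so that the complexified neighborhoods genuinely inherit the real inner radius $r$.
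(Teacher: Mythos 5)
Your proposal is correct and follows essentially the same route as the paper: membership of $\famiI{\iota_i\circ\gamma_i}$ in the restricted product via the (uniformly bounded, isometric) linear superposition $\prod_i(\iota_i)_*$, and the uniform inner radius by feeding the image of $\gamma_i$ into hypothesis \ref{cond:glmAbstand_Rand} to transfer the bound $r$ on $\dist{\cdot}{Y_i\setminus V_i}$ to $\dist{\cdot}{(Y_i)_\C\setminus\widetilde{V_i}}$. The only cosmetic difference is that you apply \ref{cond:glmAbstand_Rand} to singletons $M=\sset{\gamma_i(x)}$ while the paper applies it once to $M=\gamma_i(U_i)$, which is immaterial.
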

\begin{proof}
	Note that $\prod_{i \in I} (\iota_i)_*$ is defined by \refer{lem:multilineareSuperpos-Linf}.
	Let $\gamma \in \CcFoPROi{U_i}{V_i}{k}$.
	By definition, there exists $r > 0$ such that $\gamma_i(U_i) + \Ball[Y_i]{0}{r} \sub V_i$
	for all $i \in I$; in particular, $\dist{\gamma_i(U_i)}{Y_i \setminus V_i} \geq r$.
	By \ref{cond:glmAbstand_Rand},
	$\dist{\iota_i(\gamma_i(U_i))}{(Y_i)_\C\setminus\widetilde{V_i}} \geq r$
	and hence $(\iota_i \circ \gamma_i)(U_i) + \Ball[(Y_i)_\C]{0}{r} \sub \widetilde{V_i}$
	for each $i \in I$.
	Thus
	\[
		\prod_{i \in I} (\iota_i)_*(\gamma)
		= \famiI{\iota_i \circ \gamma_i} \in \CcFoPROi{U_i}{\widetilde{V}_i}{k},
	\]
	which finishes the proof.
\end{proof}
We now prove the result. We assume that the domains of the superposition maps
do not become arbitrarily small, and that they are uniformly bounded on
subsets that have a uniform distance from the domain boundary.
This, together with the Cauchy estimates, will enable us to use \refer{prop:simultane_SP_BCinf0_Produkt}.
We need two results from \cite{arxiv_1006.5580v3} that were used in \cite{MR2952176},
but not explicitely stated.
La. 3.3.13 is a (revised) version of the approximation technique used in the proof of \cite[La. 3.3.13]{MR2952176},
and estimate (3.3.15.1) was used in the proof of \cite[La. 3.3.14]{MR2952176}.

\begin{cor}\label{cor:sim-SuperPos_analytisch}
	Let $\famiI{U_i}$ and $\famiI{V_i}$ be families such that each $U_i$
	is an open nonempty subset of a normed space $X_i$,
	each $V_i$ is an open subset of a normed space $Y_i$ that is star-shaped with center $0$
	such that $\inf_{i \in I} \dist{\sset{0}}{\partial V_i} > 0$.
	Further, let $\famiI{Z_i}$ be another family of normed spaces
	and $\GewFunk \sub \cl{\R}^{\disjointUI{U_i}}$ with $1_{\disjointUI{U_i}} \in \GewFunk$.
	For each $i \in I$, let $\beta_i : V_i \to  Z_i$ be a map with $\beta_i(0) = 0$.
	Further, assume that either all $\beta_i$ are complex analytic with
	\begin{equation}\label{cond:glm_beschr_Mengen-glm-dist-Rand}
		\bigl(\forall \famiI{W_i} : W_i \sub V_i \text{ open and bounded}, \inf_{i \in I} \dist{W_i}{\partial V_i} > 0\bigr)
		\,\sup_{i \in I} \hn{\beta_i}{ 1_{W_i} }{0} < \infty ;
	\end{equation}
	or that any $\beta_i$ is real analytic and has a complexification
	\[
		\widetilde{\beta_i} : \widetilde{V_i} \sub (Y_i)_\C \to (Z_i)_\C
	\]
	such that \ref{cond:glm_beschr_Mengen-glm-dist-Rand} is satisfied
	and whose domains $\widetilde{V_i}$ are star-shaped with center $0$
	and satisfy \ref{cond:glmAbstand_Rand}.
	Then for $k \in \cl{\N}$, the map
	\[
		\beta_\ast : \CcFoPROi{U_i}{V_i}{k}
		\to
		\CcFproI{U_i}{Z_i}{k}
		:
		\famiI{\gamma_i} \mapsto \famiI{(\beta_i)_* (\gamma_i)} = \famiI{\beta_i \circ \gamma_i}
	\]
	is defined and analytic.
\end{cor}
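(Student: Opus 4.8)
The plan is to establish the complex-analytic case first and then reduce the real-analytic case to it. In the real case the hypotheses furnish complexifications $\widetilde{\beta_i}:\widetilde{V_i}\to(Z_i)_\C$ that are complex analytic, vanish at $0$, are defined on star-shaped neighbourhoods of $0$ whose boundary-distance bound is inherited from $\famiI{V_i}$ through \ref{cond:glmAbstand_Rand}, and satisfy \ref{cond:glm_beschr_Mengen-glm-dist-Rand}. By \refer{lem:Komplexifizierung_LinfCW_offene-Mengen} the inclusions $\iota_i$ push $\CcFoPROi{U_i}{V_i}{k}$ into the complex restricted product $\CcFoPROi{U_i}{\widetilde{V_i}}{k}$, on which the complex case applied to $\famiI{\widetilde{\beta_i}}$ yields a complex-analytic operator $\widetilde{\beta}_\ast$. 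Since $\widetilde{\beta_i}\circ\iota_i=\iota_i\circ\beta_i$ on $V_i$, the map $\widetilde{\beta}_\ast$ restricts along these inclusions to $\beta_\ast$, exhibiting $\beta_\ast$ as the restriction of a complex-analytic map to the realification of an open subset of a complex space, hence as a real analytic map. It therefore suffices to treat the case where all $\beta_i$ are complex analytic.

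So assume each $\beta_i$ is complex analytic. Every $\famiI{\gamma_i}$ in the domain has uniformly bounded image, because $1_{\disjointUI{U_i}}\in\GewFunk$ forces $\sup_{i\in I}\sup_{x}\norm{\gamma_i(x)}=\hn{\famiI{\gamma_i}}{1_{\disjointUI{U_i}}}{0}<\infty$, and these images lie at a common distance $r>0$ from $\partial V_i$. Thus the $\beta_i$ are uniformly bounded on the bounded open sets $\gamma_i(U_i)+\Ball[Y_i]{0}{r/2}$ (of uniform positive boundary distance) by \ref{cond:glm_beschr_Mengen-glm-dist-Rand}, and the Cauchy estimates promote this to uniform bounds $\sup_i\hn{\beta_i}{1_{W_i}}{\ell}<\infty$ for all $\ell\in\N$, where $W_i\ndef\gamma_i(U_i)+\Ball[Y_i]{0}{r/4}$. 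To obtain smoothness, which is local, at a base point $\gamma^0$ with associated radius $r$ I would pass to $\Theta_i:U_i\times\Ball[Y_i]{0}{r/4}\to Z_i$, $(x,v)\mapsto\beta_i(\gamma^0_i(x)+v)-\beta_i(\gamma^0_i(x))$: these vanish on $U_i\times\sset{0}$, are defined on a product whose second factor is a ball (star-shaped about $0$), and by the uniform Cauchy bounds satisfy \ref{cond:est_SP-Abb_weights}. Hence \refer{prop:simultane_SP_BCinf0_Produkt} applies, after enlarging $\GewFunk$ by a large constant multiple of $1_{\disjointUI{U_i}}$ to have an adjusting weight for $\famiI{\Ball[Y_i]{0}{r/4}}$ available (which alters neither the space nor its topology, cf. \refer{bem:konstantes-1-Gew_adjust-weight}); since $\beta_\ast(\gamma)=\famiI{\Theta_i\circ(\id{U_i},\gamma_i-\gamma^0_i)}+\famiI{\beta_i\circ\gamma^0_i}$ and the constant family $\famiI{\beta_i\circ\gamma^0_i}$ lies in the target by the same bounds, $\beta_\ast$ is defined and smooth near $\gamma^0$, hence everywhere.

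For analyticity I would expand the same $\Theta_i$ about the base point: for $\hn{\eta}{1_{\disjointUI{U_i}}}{0}<r$ we have $\Theta_i(x,\eta_i(x))=\sum_{n\ge1}\frac1{n!}\FAbl[n]{\beta_i}(\gamma^0_i(x))(\eta_i(x),\dotsc,\eta_i(x))$, the radius of convergence being $\ge r$ uniformly in $i$ and $x$. The $n$-th summand defines an $n$-homogeneous operator $\eta\mapsto\famiI{\frac1{n!}\FAbl[n]{\beta_i}(\gamma^0_i(\cdot))(\eta_i(\cdot),\dotsc,\eta_i(\cdot))}$; regarding $\famiI{\frac1{n!}\FAbl[n]{\beta_i}\circ\gamma^0_i}$ as a uniformly bounded smooth multiplier and $\eta\mapsto(\eta,\dotsc,\eta)$ as a continuous multilinear superposition, each summand is a continuous homogeneous polynomial, hence analytic, by \refer{lem:simultane_mult-multiplier} and \refer{lem:multilineareSuperpos-Linf}. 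The uniform Cauchy bound $\sup_i\Opnorm{\frac1{n!}\FAbl[n]{\beta_i}(\gamma^0_i(x))}\le M(r')^{-n}$ for $r'<r$ makes the tails uniformly small in every seminorm $\hn{\cdot}{f}{\ell}$; the approximation technique \cite[La.~3.3.13]{arxiv_1006.5580v3} together with the tail estimate \cite[(3.3.15.1)]{arxiv_1006.5580v3} then present $\beta_\ast$ near $\gamma^0$ as a locally uniform limit of these analytic partial sums — formed in the complete restricted product of the completions of the $Z_i$ (cf. \refer{lem:Linf_compl_wenn_Faktoren_c}) and coinciding with $\beta_\ast$ — so that $\beta_\ast$ is analytic.

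The hard part will be this last passage to the limit in the restricted-product topology, which is strictly finer than the product topology: analyticity survives the limit only if the partial sums converge uniformly over all $i\in I$ simultaneously. The whole argument rests on the uniformity in $i$ of the Cauchy bounds, which is precisely what \ref{cond:glm_beschr_Mengen-glm-dist-Rand} secures on sets of uniform boundary distance, and on correctly accounting for the base-point dependence of the coefficients $\frac1{n!}\FAbl[n]{\beta_i}(\gamma^0_i(\cdot))$, so that each homogeneous term must be recognised as an admissible \emph{varying} multiplier rather than a constant multilinear map. Keeping this uniformity intact as it threads through the homogeneous expansion, the tail estimate, and the seminorms $\hn{\cdot}{f}{\ell}$ with $f\in\GewFunk$ is the technical crux of the proof.
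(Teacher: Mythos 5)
Your reduction of the real--analytic case to the complex case via \refer{lem:Komplexifizierung_LinfCW_offene-Mengen} is exactly the paper's argument, but your treatment of the complex case has two genuine gaps.

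First, the localization $\Theta_i(x,v)=\beta_i(\gamma^0_i(x)+v)-\beta_i(\gamma^0_i(x))$ is not an admissible input for \refer{prop:simultane_SP_BCinf0_Produkt}: that proposition requires $\Theta_i\in\FC{U_i\times V_i}{Z_i}{\infty}$, i.e.\ smoothness jointly in both variables, whereas for finite $k$ (in particular $k=0$) the base point $\gamma^0_i$ is only $\ConDiff{}{}{k}$, so $\Theta_i$ need not even be once differentiable in its first argument; moreover, verifying \ref{cond:est_SP-Abb_weights} for $\Theta_i$ would drag all higher derivatives of $\gamma^0_i$ into the bounds through the chain rule. The paper avoids the base-point translation altogether: it uses \cite[La.~3.3.13]{arxiv_1006.5580v3} to exhaust each $V_i$ by bounded open sets $V_i^{\partial,r}$ that are \emph{star-shaped with center $0$} and keep a uniform distance from $\partial V_i$, obtains $\sup_{i}\hn{\beta_i}{1_{V_i^{\partial,r}}}{\ell}<\infty$ from the Cauchy estimates together with \ref{cond:glm_beschr_Mengen-glm-dist-Rand}, and then applies \refer{cor:simultane_SP_BCinf0_einfach}, whose auxiliary map $(x,y)\mapsto\beta_i(y)$ is trivially smooth in the first variable. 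The star-shaped exhaustion is precisely the device that makes a translation-free argument possible; without it (or an equivalent substitute) your route does not go through for general $k$.

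Second, your analyticity argument is both unnecessary and unfinished. Once $\beta_*$ is shown to be $\ConDiff{}{}{\infty}$ over $\C$, it is automatically complex analytic --- this is the one-line step the paper invokes with \enquote{smooth (and hence analytic)} --- and the real case then inherits real analyticity by restriction of $\widetilde\beta_*$ along $\prod_i(\iota_i)_*$. Your separate homogeneous-series expansion, whose decisive step (uniform-in-$i$ convergence of the partial sums in the restricted-product topology) you explicitly leave open as the \enquote{technical crux}, is therefore not needed; but as written your proposal never resolves that crux, so it does not actually establish analyticity.
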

\begin{proof}
	We first assume that all $\beta_i$ are complex analytic.
	Let $r \in ]0, d[$, where $d \ndef \inf_{i \in I}\dist{\sset{0}}{\partial V_i}$.
	We use \cite[La. 3.3.13]{arxiv_1006.5580v3}
	to see that there
	exists a family $\famiI{\VF^{\partial,r}_i}$ such that each $\VF^{\partial,r}_i$
	is open, bounded and star-shaped with center $0$; and furthermore
	$\inf_{i \in I} \dist{\VF^{\partial,r}_i}{\partial V_i} \geq \tfrac{d - r}{2} \min(1, r^2)$
	and $\bigcup_{r < d} \VF^{\partial,r}_i = \VF_i$ for each $i \in I$.
	Hence we see with the Cauchy estimates \cite[(3.3.15.1)]{arxiv_1006.5580v3}
	that for each $\ell \in \N$, there exists $ \widetilde{r} < \tfrac{d - r}{2} \min(1, r^2)$ such that
	\[
		\hn{\beta_i}{1_{\VF^{\partial,r}_i} }{\ell}
		\leq \frac{(2 \ell)^\ell}{ (\widetilde{r})^\ell }
			\hn{\beta_i}{1_{\VF^{\partial,r}_i + \clBall[\SY_i]{0}{ \widetilde{r} } } }{0}
	\]
	for all $i \in I$.
	Using \ref{cond:glm_beschr_Mengen-glm-dist-Rand}, we conclude from this that
	\[
		\set{\hn{\beta_i}{1_{\VF^{\partial,r}_i} }{\ell}}{i \in I}
	\]
	is bounded, so we use \refer{cor:simultane_SP_BCinf0_einfach} to see that
	$\beta_*$ is defined and smooth (and hence analytic) on
	$
		\CcFoPROi{U_i}{\VF^{\partial,r}_i}{k}.
	$
	Since these sets are open in $\CcFoPROi{U_i}{V_i}{k}$ and
	\[
		\CcFoPROi{U_i}{V_i}{k}
		= \bigcup_{r \in ]0, d[}\CcFoPROi{U_i}{\VF^{\partial,r}_i}{k},
	\]
	we derive the assertion.

	Now assume that all $\beta_i$ are real analytic.
	We derive from the first part of the proof
	that $\widetilde{\beta}_* = \prod_i (\widetilde{\beta_i})_*$ is defined and analytic.
	Obviously $\beta_*$ coincides with the restriction of $\widetilde{\beta}_*$
	to $\prod_{i \in I}(\iota_i)_*(\CcFoPROi{U_i}{V_i}{k})$
	(which is contained in the domain of $\widetilde{\beta}_*$ by \refer{lem:Komplexifizierung_LinfCW_offene-Mengen}),
	hence $\beta_*$ is real analytic.
\end{proof}
We provide an application.
\begin{lem}\label{lem:sim-SuperPos_QuasiInversion}
	Let $\famiI{U_i}$ be a family such that
	each $U_i$ is an open nonempty subset of the normed space $X_i$,
	$\famiI{Y_i}$ a family of Banach spaces,
	$\GewFunk \sub \cl{\R}^{\disjointUI{U_i}}$ with $1_{\disjointUI{U_i}} \in \GewFunk$
	and $k \in \N$.
	Then the map
	\[
		\CcFoPROi{U_i}{\Ball[\Lin{Y_i}{Y_i}]{0}{1} }{k}\to \CcFproI{U_i }{ \Lin{Y_i}{Y_i} }{k}
		: \gamma \mapsto \famiI{\QuasiInv_{\Lin{Y_i}{Y_i}} \circ \gamma_i }
	\]
	is defined and analytic.
\end{lem}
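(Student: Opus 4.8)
The plan is to deduce this as a special case of \refer{cor:sim-SuperPos_analytisch}, applied with the choices $V_i \ndef \Ball[\Lin{Y_i}{Y_i}]{0}{1}$, $Z_i \ndef \Lin{Y_i}{Y_i}$ and $\beta_i \ndef \QuasiInv_{\Lin{Y_i}{Y_i}}$ for each $i \in I$; with these choices the map $\famiI{\gamma_i} \mapsto \famiI{\beta_i \circ \gamma_i}$ of the corollary is exactly the map $\famiI{\gamma_i} \mapsto \famiI{\QuasiInv_{\Lin{Y_i}{Y_i}} \circ \gamma_i}$ of the lemma. First I would record the properties of quasi-inversion that are needed: on the open unit ball of the unital Banach algebra $\Lin{Y_i}{Y_i}$, the map $\QuasiInv$ is analytic, it is given by a convergent power series without constant term (so $\QuasiInv(0) = 0$), and the underlying geometric/Neumann series yields the bound $\Opnorm{\QuasiInv(x)} \leq \tfrac{\Opnorm{x}}{1 - \Opnorm{x}}$ whenever $\Opnorm{x} < 1$.

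Next I would check the standing hypotheses of \refer{cor:sim-SuperPos_analytisch}. Each $V_i$ is a ball centered at $0$, hence open, bounded and star-shaped with center $0$; moreover $\dist{\sset{0}}{\partial V_i} = 1$ for every $i$, so $\inf_{i \in I} \dist{\sset{0}}{\partial V_i} = 1 > 0$. By assumption $1_{\disjointUI{U_i}} \in \GewFunk$, and $\beta_i(0) = 0$ as noted above. Thus all the structural requirements are met, and it only remains to verify the analyticity hypotheses of the corollary.

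The crux is \ref{cond:glm_beschr_Mengen-glm-dist-Rand}. Let $\famiI{W_i}$ be a family with each $W_i \sub V_i$ open and bounded and $\delta \ndef \inf_{i \in I}\dist{W_i}{\partial V_i} > 0$. Since $\partial V_i$ is the unit sphere of $\Lin{Y_i}{Y_i}$, the condition $\dist{W_i}{\partial V_i} \geq \delta$ forces $\Opnorm{x} \leq 1 - \delta$ for every $x \in W_i$. The power-series estimate above then gives $\Opnorm{\QuasiInv(x)} \leq \tfrac{1 - \delta}{\delta}$ uniformly in $i \in I$ and $x \in W_i$, whence $\sup_{i \in I} \hn{\beta_i}{1_{W_i}}{0} \leq \tfrac{1 - \delta}{\delta} < \infty$. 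This is precisely \ref{cond:glm_beschr_Mengen-glm-dist-Rand}.

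Finally I would dispose of the analyticity assumption. If the $Y_i$ are complex Banach spaces, the $\beta_i$ are complex analytic and \refer{cor:sim-SuperPos_analytisch} applies directly. In the real case I would take as complexification of $\beta_i$ the quasi-inversion $\widetilde{\beta_i} \ndef \QuasiInv_{(\Lin{Y_i}{Y_i})_\C}$ on the open unit ball $\widetilde{V_i} \ndef \Ball[(\Lin{Y_i}{Y_i})_\C]{0}{1}$ of the complexified algebra, which is again star-shaped with center $0$; since the canonical inclusion $\Lin{Y_i}{Y_i} \to (\Lin{Y_i}{Y_i})_\C$ is isometric and $\widetilde{V_i}$ is the unit ball, both \ref{cond:glmAbstand_Rand} and \ref{cond:glm_beschr_Mengen-glm-dist-Rand} hold by exactly the estimate used above. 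Either way \refer{cor:sim-SuperPos_analytisch} yields that the map in question is defined and analytic. The main obstacle is \emph{not} the boundedness estimate, which is a one-line geometric-series bound; rather, it is the bookkeeping in the real-analytic branch, where one must exhibit the complexification and confirm that the norm on the complexified operator algebra is compatible enough for the distance condition \ref{cond:glmAbstand_Rand} to be satisfied.
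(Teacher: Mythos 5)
Your proposal is correct and follows essentially the same route as the paper: the paper likewise deduces the lemma from \refer{cor:sim-SuperPos_analytisch}, observing that each $\rest{\QuasiInv_{\Lin{Y_i}{Y_i}}}{\Ball[\Lin{Y_i}{Y_i}]{0}{1}}$ is given by one and the same power series, which yields \ref{cond:glm_beschr_Mengen-glm-dist-Rand}. Your write-up merely makes explicit the geometric-series bound and the complexification bookkeeping that the paper leaves implicit.
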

\begin{proof}
	This is simply an application of \refer{cor:sim-SuperPos_analytisch}
	since each $\rest{\QuasiInv_{\Lin{Y_i}{Y_i}}}{\Ball[\Lin{Y_i}{Y_i}]{0}{1}}$
	can be written as a (the same) power series, and hence satisfies
	\ref{cond:glm_beschr_Mengen-glm-dist-Rand}.
\end{proof}
\subsection{Simultaneous composition and inversion}
\label{susec:Simultaneous_Compo_Inv}
We examine the simultaneous application of the composition and inversion operations, respectively,
that we stated in \refer{prop:Kompo_Koord_glatt}
and \refer{prop:Zsf_Inversion_gewAbb}.
\paragraph{Simultaneous composition}
We start with composition. Note that we need the adjusting weight $\omega$ to ensure that
$\CcFfoPROi{U_i}{V_i}{k}{\omega}$ is open and not empty.
\begin{prop}\label{prop:Simultane_Koor-Kompo_diffbar}
	Let $\famiI{U_i}$, $\famiI{V_i}$ and $\famiI{W_i}$ be families such that
	for each $i \in I$, $U_i$, $V_i$ and $W_i$ are open nonempty sets of the normed space $X_i$
	with $U_i + V_i \sub W_i$, and $V_i$ is balanced.
	Further, let $\famiI{Y_i}$ be another family of normed spaces
	and $\GewFunk \sub \cl{\R}^{\disjointUI{W_i}}$ contain an adjusting weight $\omega$ for $\famiI{V_i}$.
	Then for $k, \ell \in \cl{\N}$, the map
	\begin{equation*}
		\compIdDiffKLcW{Y}{k}{\ell}
		\ndef \prod_{i \in I} \compIdDiffKLWeights{Y_i}{k}{\ell}{\GewFunk_i}
		:\left\lbrace
		\begin{aligned}
			\CcFproI{W_i}{Y_i}{k + \ell + 1} \times \CcFfoPROi{U_i}{V_i}{k}{\omega}
			&\to \CcFproI{U_i}{Y_i}{k}
			\\
			(\famiI{\gamma_i}, \famiI{\eta_i}) &\mapsto \famiI{\gamma_i \circ (\eta_i + \id{U_i})}
		\end{aligned}\right.
	\end{equation*}
	is defined and $\ConDiff{}{}{\ell}$.
\end{prop}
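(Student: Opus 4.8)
The strategy is to imitate the proof of \refer{prop:simultane_SP_BCinf0_Produkt}: reduce the cases $k=\infty$ and $\ell=\infty$ to finite orders via \refer{lem:CinfLinf_initial_CkLinf}, apply the single-factor result \refer{prop:Kompo_Koord_glatt} componentwise (legitimate because, by \refer{bem:CcFfoPRO_pktw_in_CcFo}, each factor of $\CcFfoPROi{U_i}{V_i}{k}{\omega}$ lies in $\CcFo{U_i}{V_i}{k}$), and then run two inductions — first on $k$, to see that $\compIdDiffKLcW{Y}{k}{\ell}$ is well defined into the restricted product and continuous, and afterwards on $\ell$, to obtain $\ConDiff{}{}{\ell}$-ness. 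The point that makes this work without a growth condition of the type \ref{cond:est_SP-Abb_weights} is that the outer maps $\famiI{\gamma_i}$ are themselves elements of a restricted product, so the quantities $\hn{\gamma_i}{1_{W_i}}{m}$ from the estimates of \refer{prop:Kompo_Koord_glatt} are automatically uniformly bounded in $i$, being dominated by the single seminorm $\hn{\gamma}{1_{\disjointUI{W_i}}}{m}$; by \refer{bem:konstantes-1-Gew_adjust-weight} and the presence of the adjusting weight $\omega$ we may assume $1_{\disjointUI{U_i}}\in\GewFunk$ throughout.

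For the first induction (on $k$), in the base case $k=0$ I would estimate $\abs{f_i(x)}\,\norm{\gamma_i\circ(\eta_i+\id{U_i})(x)}$ with \ref{est:Funktionswerte_Gewicht_K-Kompo}; since $V_i$ is balanced and $U_i+V_i\sub W_i$, the local set $\sset{x}+\Disk\eta_i(U_i)$ lies in $W_i$, whence $\hn{\gamma_i}{1_{\sset{x}+\Disk\eta_i(U_i)}}{1}\leq\hn{\gamma_i}{1_{W_i}}{1}\leq\hn{\gamma}{1_{\disjointUI{W_i}}}{1}$, and taking the supremum over $i$ gives $\famiI{\gamma_i\circ(\eta_i+\id{U_i})}\in\CcFproI{U_i}{Y_i}{0}$. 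Continuity then follows by turning the difference estimate \ref{est:f,0-Norm_Differenz_Kompo} into a uniform local Lipschitz bound and invoking \refer{lem:L-Stetigkeit_Abb_in_LinfProd-gewAbb}. For the step $k\to k+1$ I would use \refer{prop:Zerlegungssatz_Familie}, so that it suffices to check that $\famiI{D(\gamma_i\circ(\eta_i+\id{U_i}))}$ lands in $\CcFproI{U_i}{\Lin{X_i}{Y_i}}{k}$ and depends continuously on $(\gamma,\eta)$. By the chain rule $D(\gamma_i\circ(\eta_i+\id{U_i}))=\bigl(D\gamma_i\circ(\eta_i+\id{U_i})\bigr)+\bigl(D\gamma_i\circ(\eta_i+\id{U_i})\bigr)\MaMu D\eta_i$. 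Here $\famiI{D\gamma_i\circ(\eta_i+\id{U_i})}=\compIdDiffKLcW{\Lin{X}{Y}}{k}{\ell}(\famiI{D\gamma_i},\eta)$ is continuous by the inductive hypothesis (note that $\famiI{D\gamma_i}$ has order $k+\ell+1$, the correct order for index $(k,\ell)$), while the second summand is the factorwise composition of operators applied to this family and to $\famiI{D\eta_i}\in\CcFproI{U_i}{\Lin{X_i}{X_i}}{k}$, which is continuous bilinear by \refer{lem:multilineareSuperpos-Linf} with weight sets $\GewFunk$ and $\sset{1_{\disjointUI{U_i}}}$. This closes the first induction.

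For the second induction (on $\ell$), the base case $\ell=1$ rests on the componentwise derivative formula \ref{id:Ableitung_Kompo}: the candidate differential is $(\gamma,\eta;\gamma_1,\eta_1)\mapsto M_*\bigl(\compIdDiffKLcW{\Lin{X}{Y}}{k}{\ell-1}(\famiI{D\gamma_i},\eta),\eta_1\bigr)+\compIdDiffKLcW{Y}{k}{\ell}(\gamma_1,\eta)$, where $M_*$ is the simultaneous evaluation of linear operators. Both summands are continuous into the restricted product (the first by \refer{lem:multilineareSuperpos-Linf}, the second being $\compIdDiffKLcW{Y}{k}{\ell}$ itself, already shown continuous), so \refer{lem:Abb_nach_Linf_Ck_wenn_Komp_Ck_mit_stetigem_Diff} yields that $\compIdDiffKLcW{Y}{k}{\ell}$ is $\ConDiff{}{}{1}$ with precisely this differential. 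For the step $\ell\to\ell+1$ I would observe that in this differential the first summand is $\ConDiff{}{}{\ell}$ by the inductive hypothesis (composition at order $\ell$, followed by the continuous linear $\gamma\mapsto\famiI{D\gamma_i}$ and the smooth bilinear $M_*$), while the second summand $\compIdDiffKLcW{Y}{k}{\ell+1}(\gamma_1,\eta)$ equals $\compIdDiffKLcW{Y}{k}{\ell}$ precomposed with the continuous linear inclusion $\CcFproI{W_i}{Y_i}{k+\ell+2}\hookrightarrow\CcFproI{W_i}{Y_i}{k+\ell+1}$ (the composition does not see the differentiation-order label), hence is $\ConDiff{}{}{\ell}$ as well; therefore $\dA{\compIdDiffKLcW{Y}{k}{\ell+1}}{}{}$ is $\ConDiff{}{}{\ell}$ and $\compIdDiffKLcW{Y}{k}{\ell+1}$ is $\ConDiff{}{}{\ell+1}$.

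The main obstacle I expect is bookkeeping the weights uniformly in $i$: ensuring that $1_{\disjointUI{W_i}}$ is admissible on the $\gamma$-space so that the factorwise bounds of \refer{prop:Kompo_Koord_glatt} assemble into finite restricted-product seminorms, and arranging (as in \refer{prop:simultane_SP_BCinf0_Produkt}) that the domains $\CcFfoPROi{U_i}{V_i}{k}{\omega}$ are open and that neighbourhoods of the form $\set{\eta}{\hn{\eta}{1_{\disjointUI{U_i}}}{1}<\hn{\gamma}{1_{\disjointUI{U_i}}}{1}+1}$ are genuine neighbourhoods — which is exactly where $\omega$ and \refer{bem:konstantes-1-Gew_adjust-weight} are needed. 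The genuinely delicate computation is the $k\to k+1$ step, where the chain-rule differential must be recognized as a sum of a simultaneous composition and a simultaneous operator-multiplication so that the inductive hypothesis and \refer{lem:multilineareSuperpos-Linf} apply; the remaining identifications (via \refer{lem:gewichtete_Abb_Produktisomorphie-endl} and \refer{lem:pktwProduktLInf}) are routine.
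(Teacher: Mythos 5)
Your proposal is correct and follows essentially the same route as the paper: definedness via the componentwise result of \refer{prop:Kompo_Koord_glatt} together with \refer{bem:CcFfoPRO_pktw_in_CcFo}, a first induction on $k$ using \refer{est:Funktionswerte_Gewicht_K-Kompo}, \refer{est:f,0-Norm_Differenz_Kompo}, \refer{lem:L-Stetigkeit_Abb_in_LinfProd-gewAbb}, \refer{prop:Zerlegungssatz_Familie} and \refer{lem:multilineareSuperpos-Linf}, then a second induction on $\ell$ built on \refer{id:Ableitung_Kompo} and \refer{lem:Abb_nach_Linf_Ck_wenn_Komp_Ck_mit_stetigem_Diff}. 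The only deviations are cosmetic (you expand $\FAbl{\gamma_i}\circ(\eta_i+\id{U_i})\MaMu(\FAbl{\eta_i}+\idco)$ into two summands where the paper keeps the factor $\FAbl{\eta_i}+\idco$ intact, and you carry the index $\ell$ through the first induction where the paper fixes $\ell=0$ and uses the continuous inclusion of higher-order spaces).
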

\begin{proof}
	We see with \refer{prop:Kompo_Koord_glatt}
	(and \refer{bem:CcFfoPRO_pktw_in_CcFo})
	that $\compIdDiffKLWeights{Y}{k}{\ell}{\GewFunk}$ is defined as a map to $\prod_{i \in I} \CcF{U_i}{Y_i}{k}$.
	We first prove by induction on $k$ that $\compIdDiffKLWeights{Y}{k}{0}{\GewFunk}$ takes its values in $\CcFproI{U_i}{Y_i}{k}$
	and is continuous.

	$k = 0$:
	We see with \refer{est:Funktionswerte_Gewicht_K-Kompo}
	that for $f \in \GewFunk$, $\gamma \in \CcFproI{W_i}{Y_i}{1}$ and $\eta \in \CcFfoPROi{U_i}{V_i}{0}{\omega}$
	\[
		\hn{\compIdDiffKLWeights{Y_i}{0}{0}{\GewFunk_i}(\gamma_i, \eta_i)}{f_i}{0}
		\leq \hn{\gamma_i}{1_{U_i}}{1}\hn{\eta_i}{f_i}{0} + \hn{\gamma_i}{f_i}{0}
	\]
	for each $i \in I$. So $\compIdDiffKLcW{Y}{0}{0}$ is defined,
	taking \refer{bem:konstantes-1-Gew_adjust-weight} into account.
	Further, we see with the same reasoning – applied to \refer{est:f,0-Norm_Differenz_Kompo} –
	and \refer{lem:L-Stetigkeit_Abb_in_LinfProd-gewAbb} that
	$\compIdDiffKLcW{Y}{0}{0}$ is locally Lipschitz continuous and hence continuous.

	$k \to k + 1$:
	We use \refer{prop:Zerlegungssatz_Familie}.
	For $\gamma \in \CcFproI{W_i}{Y_i}{k + 2}$
	and $\eta \in \CcFfoPROi{U_i}{V_i}{k + 1}{\omega}$,
	for each $i \in I$ we have
	\[
		\FAbl{(\gamma_i \circ (\eta_i + \id{U_i}))}
		= \FAbl{\gamma_i} \circ (\eta_i + \id{U_i}) \MaMu (\FAbl{\eta_i} + \idco)
		= \compIdDiffKLWeights{\Lin{X_i}{Y_i}}{k}{0}{\GewFunk_i}(\FAbl{\gamma_i}, \eta_i) \MaMu (\FAbl{\eta_i} + \idco).
	\]
	By the inductive hypothesis, the map $\compIdDiffKLcW{\Lin{X}{Y}}{k}{0}$
	is defined and continuous. Further, we see (noting \refer{bem:konstantes-1-Gew_adjust-weight})
	that $\famiI{\FAbl{\eta_i} + \idco} \in \CFproI{U_i}{\Lin{X_i}{X_i}}{\sset{1_{\disjointUI{U_i}}}}{k}$.
	Hence we can apply \refer{lem:multilineareSuperpos-Linf}
	to finish the proof.

	The case $k = \infty$ follows from the case $k < \infty$
	using \refer{lem:CinfLinf_initial_CkLinf}.

	Now we prove by induction on $\ell \in \N^*$ that $\compIdDiffKLcW{Y}{k}{\ell}$ is $\ConDiff{}{}{\ell}$.

	$\ell = 1$:
	We know from \refer{prop:Kompo_Koord_glatt} that
	\[
		\compIdDiffKLWeights{Y_i}{k}{1}{\GewFunk_i} :
		\CF{W_i}{Y_i}{\GewFunk_i}{k + 2} \times \CFo{U_i}{V_i}{\GewFunk_i}{k} \to \CF{U_i}{Y_i}{\GewFunk_i}{k}
		: (\gamma, \eta) \mapsto \gamma \circ (\eta + \id{U_i})
	\]
	is $\ConDiff{}{}{1}$ for each $i \in I$, and
	we noted in \refer{id:Ableitung_Kompo} that its differential is given by
	\[
		\dA{\,\compIdDiffKLWeights{Y_i}{k}{1}{\GewFunk_i} }{\gamma, \eta}{\gamma_1, \eta_1}
		=
		\compIdDiffKLWeights{\Lin{X_i}{Y_i}}{k}{0}{\GewFunk_i}(\FAbl{\gamma}, \eta) \eval \eta_1
		+ \compIdDiffKLWeights{Y_i}{k}{1}{\GewFunk_i}(\gamma_1, \eta).
	\]
	Since we already proved that $\compIdDiffKLcW{\Lin{X}{Y} }{k}{0}$
	and $\compIdDiffKLcW{Y}{k}{1}$ are continuous,
	we use \refer{lem:multilineareSuperpos-Linf} to see that
	\begin{gather*}
		\CcFproI{W_i}{Y_i}{k + \ell + 1} \times \CcFfoPROi{U_i}{V_i}{k}{\omega}
		\times \CcFproI{W_i}{Y_i}{k + \ell + 1} \times \CcFproI{U_i}{X_i}{k}
		\to \CcFproI{U_i}{Y_i}{k}
		\\
		(\gamma, \eta, \gamma^1, \eta^1)
		\mapsto
		\famiI{\compIdDiffKLWeights{\Lin{X_i}{Y_i}}{k}{\ell - 1}{\GewFunk_i}(\FAbl{\gamma_i}, \eta_i) \eval \eta_i^1
				+ \compIdDiffKLWeights{Y_i}{k}{\ell}{\GewFunk_i}(\gamma_i^1, \eta_i)}
	\end{gather*}
	is defined and continuous.
	Hence we can apply \refer{lem:Abb_nach_Linf_Ck_wenn_Komp_Ck_mit_stetigem_Diff}
	to see that $\compIdDiffKLcW{Y}{k}{\ell}$ is $\ConDiff{}{}{1}$
	and $\dA{\compIdDiffKLcW{Y}{k}{\ell}}{}{}$ is given by this map.

	$\ell \to \ell + 1$:
	We apply the inductive hypothesis and \refer{lem:multilineareSuperpos-Linf}
	to the identity for $\dA{\compIdDiffKLcW{Y}{k}{\ell + 1}}{}{}$ derived above
	to see that $\dA{\compIdDiffKLcW{Y}{k}{\ell + 1}}{}{}$ is $\ConDiff{}{}{\ell}$,
	hence $\compIdDiffKLcW{Y}{k}{\ell + 1}$ is $\ConDiff{}{}{\ell + 1}$.
\end{proof}

\paragraph{Simultaneous inversion}
We treat inversion. Here an adjusting weight is given explicitly.
\begin{prop}\label{prop:Simultane_Inv-Kompo_glatt}
	Let $\famiI{U_i}$ and $\famiI{\widetilde{U}_i}$ be families
	such that $U_i$ and $\widetilde{U}_i$ are open nonempty sets of the Banach space $X_i$
	and each $U_i$ is convex.
	Further assume that there exists $r > 0$ such that $\widetilde{U}_i + \Ball[X_i]{0}{r} \sub U_i$
	for all $i \in I$.
	Let $\GewFunk \sub \cl{\R}^{\disjointUI{U_i}}$ with $1_{\disjointUI{U_i}} \in \GewFunk$
	and $\tau \in ]0, 1[$.
	Then the map
	\[
		\InvIdcW{\widetilde{U}}
		\ndef
		\prod_{i \in I}
		\InvIdWeights{\widetilde{U}_i}{\GewFunk_i}
		:
		\mathcal{D}^\tau
		\to
		\CcFproI{\widetilde{U}_i}{X_i}{\infty}
		:
		\famiI{\phi_i} \mapsto \famiI{ \rest{(\phi_i + \id{U_i})^{-1}}{\widetilde{U}_i} - \id{\widetilde{U}_i} }
	\]
	is defined and smooth, where
	\[
		\mathcal{D}^\tau \ndef \left\set{\phi \in \CcFproI{U_i}{X_i}{\infty} }{%
			\hn{\phi}{1_{\disjointUI{U_i}}}{1} < \tau
			\text{ and }
			\hn{\phi}{1_{\disjointUI{U_i}}}{0}
			< \tfrac{r}{2} (1 - \tau)
		\right}.
	\]
\end{prop}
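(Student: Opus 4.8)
The plan is to reduce the claim to the factorwise inversion statement \refer{prop:Zsf_Inversion_gewAbb} together with the simultaneous operations established above, following the scheme of the proofs of \refer{prop:simultane_SP_BCinf0_Produkt} and \refer{prop:Simultane_Koor-Kompo_diffbar}. First I would record that $\mathcal{D}^\tau$ is open, because $1_{\disjointUI{U_i}}\in\GewFunk$ renders the two seminorms in its definition continuous (\refer{bem:konstantes-1-Gew_adjust-weight}). For $\phi\in\mathcal{D}^\tau$ the restricted-product seminorms dominate the factor seminorms, so $\hn{\phi_i}{1_{U_i}}{1}\leq\hn{\phi}{1_{\disjointUI{U_i}}}{1}<\tau$ and $\hn{\phi_i}{1_{U_i}}{0}<\tfrac{r}{2}(1-\tau)$ hold uniformly in $i$; hence each $\phi_i$ lies in the single-space domain $\mathcal{D}_\tau$ of \refer{prop:Zsf_Inversion_gewAbb} (with $U=U_i$, $V=\widetilde U_i$ and the same $r$), and $\InvIdcW{\widetilde U}$ is defined as a map into $\prod_{i\in I}\CcF{\widetilde U_i}{X_i}{\infty}$.

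Next I would show by induction on $k\in\N$ that $\InvIdcW{\widetilde U}$ takes its values in $\CcFproI{\widetilde U_i}{X_i}{k}$ and is continuous there. For $k=0$ the pointwise estimate \refer{est:Abschaetzung_gewichteter_FWert_der_K-Inversion} gives $\sup_{i\in I}\hn{\InvIdWeights{\widetilde U_i}{\GewFunk_i}(\phi_i)}{f_i}{0}\leq(1-\tau)^{-1}\hn{\phi}{f}{0}<\infty$ for each $f\in\GewFunk$, while the difference estimate \refer{est:f0-norm_Diff_KoorInv} (bounding the denominators by $1-\tau$ and the factor $\hn{\phi_i}{f_i}{0}$ uniformly on a neighborhood of a fixed argument) together with \refer{lem:L-Stetigkeit_Abb_in_LinfProd-gewAbb} yields local Lipschitz continuity. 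For the step $k\to k+1$ I would invoke the decomposition \refer{prop:Zerlegungssatz_Familie}, which reduces the problem to controlling the family of differentials, and here the closed form \refer{id:Differential_der_inversen_Abb},
\[
\FAbl{\InvIdWeights{\widetilde U_i}{\GewFunk_i}(\phi_i)}
=\bigl(\FAbl{\phi_i}\MaMu\QuasiInv(-\FAbl{\phi_i})-\FAbl{\phi_i}\bigr)\circ\bigl(\InvIdWeights{\widetilde U_i}{\GewFunk_i}(\phi_i)+\id{\widetilde U_i}\bigr),
\]
is the key. Its outer family lies in $\CcFproI{U_i}{\Lin{X_i}{X_i}}{k+1}$ and depends continuously on $\phi$ by \refer{lem:sim-SuperPos_QuasiInversion} (applicable since $\Opnorm{\FAbl{\phi_i}(x)}\leq\hn{\phi_i}{1_{U_i}}{1}<\tau<1$, so $\famiI{-\FAbl{\phi_i}}$ lands uniformly inside $\Ball[\Lin{X_i}{X_i}]{0}{1}$) and \refer{lem:multilineareSuperpos-Linf}, and its inner family is $\InvIdcW{\widetilde U}(\phi)$, lying continuously in $\CcFproI{\widetilde U_i}{X_i}{k}$ by the inductive hypothesis. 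Feeding these into the simultaneous composition \refer{prop:Simultane_Koor-Kompo_diffbar} gives the assertion in order $k+1$; the case $k=\infty$ then follows from \refer{lem:CinfLinf_initial_CkLinf}.

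To obtain smoothness I would argue by induction on $\ell\in\N^*$ that $\InvIdcW{\widetilde U}$ is $\ConDiff{}{}{\ell}$. For $\ell=1$ I would apply the differentiability criterion \refer{lem:Abb_nach_Linf_Ck_wenn_Komp_Ck_mit_stetigem_Diff}: each factor is $\ConDiff{}{}{1}$ by \refer{prop:Zsf_Inversion_gewAbb}, and its differential is read off from \refer{id:Ableitung_Inversion}, so the candidate differential is the factorwise assembly $\famiI{-\compIdWeights{X_i}{\GewFunk_i}(\QuasiInv(\FAbl{\phi_i})\eval\phi_{1,i}+\phi_{1,i},\InvIdWeights{\widetilde U_i}{\GewFunk_i}(\phi_i))}$. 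Verifying that this assembly is a well-defined continuous map on $\mathcal{D}^\tau\times\CcFproI{U_i}{X_i}{\infty}$ again amounts to combining \refer{lem:sim-SuperPos_QuasiInversion}, \refer{lem:multilineareSuperpos-Linf} and the simultaneous composition \refer{prop:Simultane_Koor-Kompo_diffbar} (now with the already-established $\InvIdcW{\widetilde U}$ as inner argument, and using the product identifications \refer{lem:pktwProduktLInf}). This gives $\ConDiff{}{}{1}$ with $\dA{\InvIdcW{\widetilde U}}{}{}=\prod_{i\in I}\dA{\InvIdWeights{\widetilde U_i}{\GewFunk_i}}{}{}$; since the right-hand side of \refer{id:Ableitung_Inversion} is built from $\InvIdcW{\widetilde U}$ and maps already known to be smooth, the step $\ell\to\ell+1$ is then immediate.

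The main difficulty is the bookkeeping for the composition step, namely checking that the inner family $\InvIdcW{\widetilde U}(\phi)$ maps uniformly into a balanced neighborhood meeting the hypotheses of \refer{prop:Simultane_Koor-Kompo_diffbar}. This is exactly where the quantitative estimate \refer{est:Abschaetzung_gewichteter_FWert_der_K-Inversion} is decisive: taking $f=1_{\disjointUI{U_i}}$ it yields $\norm{\InvIdWeights{\widetilde U_i}{\GewFunk_i}(\phi_i)(x)}\leq(1-\tau)^{-1}\hn{\phi_i}{1_{U_i}}{0}<\tfrac{r}{2}$ for all $i\in I$ and $x\in\widetilde U_i$, so the image lies uniformly inside $\Ball[X_i]{0}{r/2}$, well within the ball $\Ball[X_i]{0}{r}$ for which $\widetilde U_i+\Ball[X_i]{0}{r}\sub U_i$; thus the composition with the outer function defined on $U_i$ is legitimate. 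As an adjusting weight for $\famiI{\Ball[X_i]{0}{r}}$ one may take the constant $\max(r^{-1},1)\,1_{\disjointUI{\widetilde U_i}}$ (legitimate since $\inf_{i\in I}r=r>0$), which leaves the function spaces unchanged by \refer{bem:konstantes-1-Gew_adjust-weight}.
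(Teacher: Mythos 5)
Your proposal follows essentially the same route as the paper's proof: definedness via \refer{prop:Zsf_Inversion_gewAbb}, continuity by induction on $k$ using the estimates \refer{est:Abschaetzung_gewichteter_FWert_der_K-Inversion} and \refer{est:f0-norm_Diff_KoorInv} together with \refer{lem:L-Stetigkeit_Abb_in_LinfProd-gewAbb}, the decomposition \refer{prop:Zerlegungssatz_Familie} combined with \refer{id:Differential_der_inversen_Abb}, \refer{lem:sim-SuperPos_QuasiInversion}, \refer{lem:multilineareSuperpos-Linf} and \refer{prop:Simultane_Koor-Kompo_diffbar} for the inductive step, and then smoothness via \refer{id:Ableitung_Inversion} and \refer{lem:Abb_nach_Linf_Ck_wenn_Komp_Ck_mit_stetigem_Diff}. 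Your closing paragraph supplies the uniform range estimate and the choice of adjusting weight needed to legitimize the application of \refer{prop:Simultane_Koor-Kompo_diffbar}, a detail the paper leaves implicit; this is a correct and welcome addition, not a different argument.
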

\begin{proof}
	We use \refer{prop:Zsf_Inversion_gewAbb} to see that $\InvIdcW{\widetilde{U}}$
	is defined as a map to $\prod_{i \in I} \CcFproI{\widetilde{U}_i}{X_i}{\infty}$.
	We prove by induction on $k$ that it takes values in $\CcFproI{\widetilde{U}_i}{X_i}{k}$
	and is continuous.

	$k = 0$:
	By \refer{est:Abschaetzung_gewichteter_FWert_der_K-Inversion},
	we have for $f\in\GewFunk$, $\famiI{\phi_i} \in \mathcal{D}^\tau$ and each $i \in I$ that
	\[
		\hn{\InvIdWeights{\widetilde{U}_i}{\GewFunk_i}(\phi_i) }{f_i}{0}
		\leq \hn{\phi_i}{f_i}{0} \tfrac{1}{1 - \hn{\phi_i}{1_{\widetilde{U}_i}}{1}}
		\leq \tfrac{1}{1 - \tau} \hn{\phi_i}{f_i}{0}.
	\]
	Since $\tau < 1$ and $i$ was arbitrary, $\InvIdcW{\widetilde{U}}$ is defined.
	In the same manner, we can use \refer{est:f0-norm_Diff_KoorInv}
	to see with \refer{lem:L-Stetigkeit_Abb_in_LinfProd-gewAbb} that $\InvIdcW{\widetilde{U}}$
	is locally Lipschitz continuous and hence continuous.

	$k \to k + 1$:
	We use \refer{prop:Zerlegungssatz_Familie}.
	By \refer{id:Differential_der_inversen_Abb},
	for $\phi \in \mathcal{D}^\tau$,
	\[
		\famiI{\FAbl{\, \InvIdWeights{\widetilde{U}_i}{\GewFunk_i}(\phi_i)} }
		= \famiI{\compIdWeights{\Lin{\SX_i}{\SX_i}}{\GewFunk_i}(\FAbl{\phi_i} \MaMu \QuasiInv(- \FAbl{\phi_i}) - \FAbl{\phi_i},
				\InvIdWeights{\widetilde{U}_i}{\GewFunk_i}(\phi_i))}.
	\]
	Since $\famiI{\FAbl{\phi_i}} \in \CcFoPROi{U_i}{\Ball[\Lin{X_i}{X_i}]{0}{1} }{k}$,
	we can apply \refer{lem:sim-SuperPos_QuasiInversion} and after that \refer{lem:multilineareSuperpos-Linf},
	\refer{prop:Simultane_Koor-Kompo_diffbar} and the inductive hypothesis
	to finish the proof.

	The case $k = \infty$ follows from the case $k < \infty$
	with \refer{lem:CinfLinf_initial_CkLinf}.

	Now we prove that $\InvIdcW{\widetilde{U}}$ is smooth.
	More exactly, we show by induction on $\ell \in \N^*$ that it is $\ConDiff{}{}{\ell}$.

	$\ell = 1$:
	By \refer{prop:Zsf_Inversion_gewAbb},
	the map $\InvIdWeights{\widetilde{U}_i}{\GewFunk_i}$
	is $\ConDiff{}{}{1}$ on $\pi_i(\mathcal{D}^\tau)$ for each $i \in I$,
	and we stated in \refer{id:Ableitung_Inversion}
	that its differential is
	\[
		\dA{ \,\InvIdWeights{\widetilde{U}_i}{\GewFunk_i}}{\phi}{\phi^1}
			= \compIdWeights{\SX_i}{\GewFunk_i}(\QuasiInv(\FAbl{\phi}) \eval \phi^1 + \phi^1,
				\InvIdWeights{\widetilde{U}_i}{\GewFunk_i}(\phi) ).
	\]
	We conclude using \refer{lem:sim-SuperPos_QuasiInversion}, \refer{lem:multilineareSuperpos-Linf}
	\refer{prop:Simultane_Koor-Kompo_diffbar} and the continuity of $\InvIdcW{\widetilde{U}}$
	that the map
	\begin{align*}
		\mathcal{D}^\tau \times \CcFproI{U_i}{X_i}{\infty}
		\to
		\CcFproI{\widetilde{U}_i}{X_i}{\infty}
		:
		(\phi, \phi^1)
		\mapsto
		\famiI{ \compIdcW{\SX_i}(\QuasiInv(\FAbl{\phi_i}) \eval \phi^1_i + \phi^1_i,
		\InvIdWeights{\widetilde{U}_i}{\GewFunk_i}(\phi_i) ) }
	\end{align*}
	is continuous.
	So we can apply \refer{lem:Abb_nach_Linf_Ck_wenn_Komp_Ck_mit_stetigem_Diff}
	to see that $\InvIdcW{\widetilde{U}}$ is $\ConDiff{}{}{1}$ and its differential is given by this map.

	$\ell \to \ell + 1$:
	We apply the inductive hypothesis, \refer{lem:sim-SuperPos_QuasiInversion},
	\refer{lem:multilineareSuperpos-Linf} and \refer{prop:Simultane_Koor-Kompo_diffbar}
	to the identity for $\dA{\InvIdcW{\widetilde{U}}}{}{}$ derived above
	to see that $\dA{\InvIdcW{\widetilde{U}}}{}{}$ is $\ConDiff{}{}{\ell}$,
	hence $\InvIdcW{\widetilde{U}}$ is $\ConDiff{}{}{\ell + 1}$.
\end{proof}

\begin{bem}
	We implicitly used in this subsection that the operator norms of the composition resp. evaluation
	of linear maps are uniformly bounded.
\end{bem}

\printbibliography
\end{document}